\numberwithin{equation}{section}		
\theoremstyle{plain}
\newtheorem*{thmIntr}{Theorem}{\bf}{\it}
\newtheorem{theorem}[equation]{Theorem}
\newtheorem{corollary}[equation]{Corollary}
\newtheorem{lemma}[equation]{Lemma}
\theoremstyle{definition}
\newtheorem{definition}[equation]{Definition}
\theoremstyle{remark}
\newtheorem{remark}[equation]{Remark}
\newcommand{\pt}{{\rm pt}}
\newcommand{\A}{\mathbb{A}}
\newcommand{\SSp}{\mathbb{S}}
\newcommand{\Proj}{\mathbb{P}}
\newcommand{\rank}{\operatorname{rank}}
\newcommand{\triv}{\mathbbm{1}}
\newcommand{\struct}{\mathcal{O}}
\newcommand{\Gm}{{\mathbb{G}_m}}
\newcommand{\SH}{\mathcal{SH}}
\newcommand{\Hp}{\mathcal{H}_\bullet}
\newcommand{\Spec}{\operatorname{Spec}}
\newcommand{\id}{\operatorname{id}}
\newcommand{\codim}{\operatorname{codim}}
\newcommand{\SL}{{\mathrm{SL}}}
\newcommand{\Hom}{{\mathrm{Hom}}}
\newcommand{\W}{\mathrm{W}}
\newcommand{\Th}{\operatorname{Th}}
\newcommand{\thc}{\operatorname{th}}
\begin{document}

\title[Push-forwards for theories with invertible stable Hopf element]{On the push-forwards for motivic cohomology theories with invertible stable Hopf element}

\author{Alexey Ananyevskiy}

\address{Chebyshev Laboratory, St. Petersburg State University, 14th Line, 29b, Saint Petersburg, 199178 Russia}

\email{alseang@gmail.com}

\maketitle

\begin{abstract}
We present a geometric construction of push-forward maps along projective morphisms for cohomology theories representable in the stable motivic homotopy category assuming that the element corresponding to the stable Hopf map is inverted in the coefficient ring of the theory. The construction is parallel to the one given by A.~Nenashev for derived Witt groups. Along the way we introduce cohomology groups twisted by a formal difference of vector bundles as cohomology groups of a certain Thom space and compute twisted cohomology groups of projective spaces. 
\end{abstract}



\section{Introduction}

Existence of push-forward maps in a cohomology theory gives a powerful tool that allows to perform various computations and analyze properties of the considered cohomology theory. The best understood algebraic cohomology theories, such as etale cohomology, Chow groups and algebraic $K$-theory, have push-forward maps for arbitrary projective morphisms. Roughly speaking, oriented cohomology theories (see \cite{LM07,PS03,S07a}) are cohomology theories possessing push-forwards along arbitrary projective morphisms and satisfying certain natural properties. The theory in the oriented setting is quite well-developed: one may obtain a projective bundle theorem and introduce Chern classes of vector bundles \cite{PS09,S07a}, study morphisms between such theories and obtain Riemann-Roch type theorems \cite{PS04,S07b}, construct a universal oriented cohomology theory \cite{LM07} that allows to perform computations in the universal setting, study the corresponding categories of motives and obtain various motivic decompositions \cite{NZ06}, etc.

On the other hand, there are some interesting cohomology theories for which one can not define push-forward maps along arbitrary projective morphisms. Among the examples are derived Witt groups, hermitian K-theory, oriented Chow groups and Witt cohomology (see \cite{Bal99,BM00,Sch10} for the definitions). A more down-to-earth example is given by choosing an embedding of the base field to $\mathbb{R}$, taking real points of the considered variety and computing singular cohomology with integer coefficients. Another example is given by motivic stable cohomotopy groups $\SSp^{i,j}$, i.e. by the cohomology theory represented by the spherical spectrum in the motivic stable homotopy category. All these theories have in common that the usual version of projective bundle theorem fails, i.e. $A(\mathbb{P}^n)\not\cong A(\pt)[t]/t^{n+1}$, where $A$ denotes the corresponding cohomology theory. Nevertheless, sometimes one can obtain a certain computation for projective space, for example for derived Witt groups we know that $\W^*(\Proj^n_k)\oplus \W^*(\Proj^n_k,\struct(1))$ is a free module over $\W^{*}(\Spec k)$ of rank two \cite{G01,Ne09b,Wa03}. Based on this computation A.~Nenashev defined for derived Witt groups push-forwards along projective morphisms \cite{Ne09a}: for a projective morphism of smooth varieties $f\colon Y\to X$ and a line bundle $L$ over $X$ he defined a homomorphism 
\[
f_\W\colon \W^*(Y,f^*L\otimes\omega_Y\otimes f^*\omega_X^{-1})\to \W^{*+c}(X,L),
\]
where $c=\dim X-\dim Y$. The twists should agree in the way as it is stated above, for example, for a projection $p\colon \mathbb{P}^2_k\to \Spec k$ we do not have a push-forward map $p_\W\colon \W^*(\mathbb{P}^2_k)\to \W^{*-2}(\Spec k)$. It is noteworthy that there is another way to define push-forward maps for derived Witt groups based on Grothendieck duality \cite{CH11} yielding homomorphisms of the same kind.

For a cohomology theory representable in the motivic stable homotopy category there is a general approach to the construction of push-forward maps based on the Atiyah duality, which was settled in the motivic setting by P.~Hu and I.~Kriz \cite{H05} via geometric methods and by J.~Riou \cite{R05} using four functors formalism developed by V.~Voevodsky and J.~Ayoub. Consider a projective morphism $f\colon Y\to X$ of smooth varieties. Suppose for simplicity that both $Y$ and $X$ are projective. Then we have the dual morphism $f^\vee\colon \Sigma_T^\infty X^\vee\to \Sigma_T^\infty Y^\vee$ for $\Sigma_T^\infty X^\vee=\operatorname{hom}(\Sigma_T^\infty X,\SSp)$ and $\Sigma_T^\infty Y^\vee = \operatorname{hom}(\Sigma_T^\infty Y,\SSp)$ being the dual spectra. Atiyah duality gives isomorphisms $\Sigma_T^\infty X^\vee\cong \Sigma_T^\infty \Th(-T_X)$ and $\Sigma_T^\infty Y^\vee\cong \Sigma_T^\infty \Th(-T_Y)$, where $\Sigma_T^\infty \Th(-T_X)$ and $\Sigma_T^\infty \Th(-T_Y)$ are suspension spectra of stable normal bundles, i.e. we use the Jouanolou device (see \cite{J73}, \cite[\S~4]{We89}) replacing the varieties by affine ones, consider vector bundles complement to the tangent bundles and take an appropriate shifts of the suspension spectra of the respective Thom spaces. Hence we have the corresponding morphism of the cohomology groups
\[
(f^\vee)^A \colon A^{*,*}(\Sigma_T^\infty \Th(-T_Y))\to A^{*,*}(\Sigma_T^\infty \Th(-T_X)).
\]
Identifying $A^{*,*}(\Sigma_T^\infty \Th(-T_X))$ and $A^{*,*}(\Sigma_T^\infty \Th(-T_Y))$ with the cohomology of $X$ and $Y$ via appropriate Thom isomorphisms we obtain push-forward maps. In particular, recall that derived Witt groups of a Thom space coincide with the derived Witt groups of the vector bundle supported on the zero section, thus Thom isomorphisms from \cite{Ne07} give isomorphisms
\begin{gather*}
\W^*(\Sigma_T^\infty \Th(-T_Y))\cong \W^{*+\dim Y}(Y,(\det T_Y)^{-1})= \W^{*+\dim Y}(Y, \omega_Y)\\
\W^*(\Sigma_T^\infty \Th(-T_X))\cong \W^{*+\dim X}(X,(\det T_X)^{-1})= \W^{*+\dim X}(X, \omega_X)
\end{gather*}
and from our viewpoint that is the main geometric reason why we have the twist by $\omega_Y\otimes f^*\omega_X^{-1}$.

In this paper we generalize the construction of projective push-forwards for derived Witt groups given by A.~Nenashev \cite{Ne09a} to the case of representable cohomology theories with invertible stable Hopf element $\eta\in A^{-1,-1}(\pt)$ (see Definition~\ref{d:hopf} for the precise definition of stable Hopf element). Among the examples are Witt cohomology $\mathrm{H}^*(-,\W^*)$ and stable cohomotopy groups with inverted stable Hopf element $\SSp_\eta^{*,*}(-)$. On the other hand, for an oriented cohomology theory $A^{*,*}(-)$ one can show that $A^{*,*}(-)[\eta^{-1}]=0$, see \cite[Corollary~1]{An12} or just combine Theorem~\ref{Th:proj_coh} for $\Proj^2_X$ with the oriented cohomology projective bundle theorem $A^{*,*}(\Proj^{2}_X)=A^{*,*}(X)[t]/(t^3)$. Thus our study of representable cohomology theories with invertible stable Hopf element does not give any new immediate information on the oriented cohomology theories but rather goes in a complementary direction. It is worth mentioning that although inverting stable Hopf element we lose a lot of information, in particular, all the oriented theories become trivial, the resulting theory is quite rich. Rationally the theory of $\eta$-inverted spectra coincides with the theory of homotopy modules over the Witt sheaf (see \cite{ALP15} for a precise statement), while integrally it is even more complicated: a recent work of M.~Andrews and H.~Miller \cite{AM15} shows that $\SSp_\eta^{*,*}(\pt)$ over $k=\mathbb{C}$ contains some additional interesting $2$-torsion. Here and later on we denote $\pt=\Spec k$ for the base field $k$.

Analyzing A.~Nenashev's approach to the construction of projective push-forwards one can notice that it is based on the computation of derived Witt groups of projective space, which could be done via induction as in \cite{Ne09b} using the following properties of derived Witt groups:
\begin{enumerate}
\item
Identification $\W^*(\mathbb{P}^2)=\W^*(\pt)$;
\item
Thom isomorphisms $\W^{*+n}_X(E)\cong \W^{*}(X,\det E)$ for a rank $n$ vector bundle $E$ over a smooth variety $X$;
\item
Isomorphisms $\W^*(X,L_1\otimes L_2^{\otimes 2})\cong \W^*(X,L_1)$.
\end{enumerate}
For a general representable cohomology theory $A^{*,*}(-)$ one may rephrase these properties in the following way:
\begin{enumerate}
\item
The stable Hopf element $\eta\in A^{-1,-1}(\pt)$ is invertible (see Definition~\ref{d:hopf} and Remark~\ref{r:hopfcone});
\item
$A^{*,*}(-)$ is $\SL$-oriented in the sense of \cite[Definition 5.1]{PW10b} (see also \cite{An12});
\item
For a line bundle $L$ over a smooth variety $X$ one has $A^{*+2,*+1}_X(L^{\otimes 2})\cong A^{*,*}(X)$ (cf. \cite[Definition 3.3]{PW10c}).
\end{enumerate}
As we show in Theorem~\ref{Th:proj_coh} only the first property is essential:
\begin{thmIntr}
Let $A$ be a commutative ring $T$-spectrum and let $X$ be a smooth variety. Then
\[
A^{*,*}_\eta(\Proj^{2n}_X)\cong A^{*,*}_\eta(X),\quad A^{*,*}_\eta(\Proj^{2n-1}_X)\cong A^{*,*}_\eta(X)\oplus A^{*-4n+1,*-2n}_\eta(X),
\]
where $A^{*,*}_\eta(-)=A^{*,*}(-)[\eta^{-1}]$.
\end{thmIntr}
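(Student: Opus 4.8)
The plan is to build $\Proj^n_X$ from the chain of linear subspaces $\Proj^0_X\subset\Proj^1_X\subset\cdots\subset\Proj^n_X$ and to exploit the fact that the successive cells are glued by the stable Hopf map. For each $k$ the cofiber of the closed inclusion $\Proj^{k-1}_X\hookrightarrow\Proj^k_X$ is the quotient $\Proj^k_X/\Proj^{k-1}_X$, which by the standard cellular structure of projective space is the trivial Thom space $T^{\wedge k}\wedge X_+=\Sigma^{2k,k}X_+$ (the one-point compactification of the open cell $\mathbb{A}^k_X$). Applying $A_\eta^{*,*}(-)$ and using the suspension isomorphism $A_\eta^{*,*}(\Sigma^{2k,k}X_+)\cong A_\eta^{*-2k,*-k}(X)$ yields, for each $k$, a long exact sequence
\[
\cdots\to A_\eta^{*-2k,*-k}(X)\to A_\eta^{*,*}(\Proj^k_X)\to A_\eta^{*,*}(\Proj^{k-1}_X)\xrightarrow{\ \delta_k\ }A_\eta^{*+1-2k,*-k}(X)\to\cdots.
\]
The whole computation is thereby reduced to understanding the connecting maps $\delta_k$.

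The base case is the identity $\Proj^2\simeq\Cone(\eta)$, i.e. the cofiber sequence $S^{3,2}\xrightarrow{\eta}S^{2,1}\to\Proj^2$. Smashing with $X_+$ and passing to $A_\eta^{*,*}$, the induced map is multiplication by the now-invertible $\eta$, hence an isomorphism, so the cohomology of $\Proj^2_X$ relative to $X$ vanishes and $A_\eta^{*,*}(\Proj^2_X)\cong A_\eta^{*,*}(X)$. More generally I would show that the top-cell component of the $k$-th attaching map $\Sigma^{2k-1,k}X_+\to(\Proj^{k-1}_X)_+$ is multiplication by $\lambda_k\,\eta$ for a scalar $\lambda_k\in\GW(k)$, so that $\delta_k$ is, up to the invertible $\eta$, multiplication by $\lambda_k$. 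The essential point is that the attaching maps are data in the motivic stable homotopy category, completely independent of $A$; only the invertibility of $\eta$ in the coefficients of $A$ enters afterwards, which is exactly why the single hypothesis suffices and no orientation is needed.

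The main obstacle, and the technical heart of the argument, is to pin down the classes $\lambda_k$ modulo $\eta$-torsion. I expect to prove that $\lambda_k$ is a unit when $k$ is even and a multiple of the hyperbolic element $h=1+\langle-1\rangle$ when $k$ is odd; combined with Morel's universal relation $\eta h=0$ this makes $\delta_k$ an isomorphism for even $k$ and zero for odd $k$. I would organize this through the two-cell stunted spaces $\Proj^k_X/\Proj^{k-2}_X$, each of which is a suspension of the cone on a scalar multiple of $\eta$ and whose $\eta$-inverted type depends only on the parity of $k$, reducing the determination of every $\lambda_k$ to the single computation already carried out in the base case. (A consistency check: in the prototype of Witt groups this parity pattern reproduces the known rank $1$ of $\W^*(\Proj^{2n})$ and rank $2$ of $\W^*(\Proj^{2n-1})$.)

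Granting the parity statement, the result follows by an immediate induction. For $\Proj^{2n}_X$ the isomorphisms $\delta_2,\delta_4,\dots,\delta_{2n}$ cancel the cells in consecutive pairs, leaving only the bottom cell, so $A_\eta^{*,*}(\Proj^{2n}_X)\cong A_\eta^{*,*}(X)$. For $\Proj^{2n-1}_X$ the same cancellations exhaust the cells of index $1,\dots,2n-2$ but leave the bottom cell and the top cell, the latter contributing $A_\eta^{*-4n+2,*-2n+1}(X)\cong A_\eta^{*-4n+1,*-2n}(X)$ after one more application of the invertible $\eta$; this gives the asserted splitting. Finally I would remark that, keeping track of the same cofiber sequences together with the twists by the powers of $\struct(1)$, the argument upgrades to a computation of the $\struct(1)$-twisted $A_\eta$-groups of $\Proj^n_X$, of which the stated theorem is the untwisted shadow.
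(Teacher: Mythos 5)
Your cellular skeleton is sound: $\Proj^k_X/\Proj^{k-1}_X\simeq T^{\wedge k}\wedge X_+$, the connecting map $\delta_k$ is induced by the stable attaching map $(\A^k-0)\times X\to \Proj^{k-1}_X$ (which is $H_k\times\id_X$, since $\Proj^k$ is the homotopy pushout of $\pt\leftarrow(\A^k-0)\to\Proj^k-0\simeq\Proj^{k-1}$), the base case $\Proj^2\simeq\Cone(\eta)$ is Morel's lemma (cited in Remark~\ref{r:hopfcone}), and granting your parity claim the induction does give the theorem. In fact you need less than you think: for odd $k$ the inductive hypothesis says $A^{*,*}_\eta(\Proj^{k-1}_X)$ coincides with the image of $p^A$, and $\delta_k\circ p^A=0$ for purely formal reasons, because $p^A_{\Proj^{k-1}_X}=\iota^A\circ p^A_{\Proj^{k}_X}$ factors through the restriction $\iota^A$ along $\iota\colon\Proj^{k-1}_X\to\Proj^k_X$, whose image is $\ker\delta_k$ by exactness. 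So $\delta_k=0$ automatically for odd $k$, and your entire discussion of the hyperbolic element and $\eta h=0$ can be dropped. What cannot be dropped is the even case, and that is where the gap lies.

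The assertion that $\lambda_k$ is a unit for even $k$ --- equivalently, that the composite $(\A^{2n}-0)\xrightarrow{H_{2n}}\Proj^{2n-1}\to\Proj^{2n-1}/\Proj^{2n-2}\simeq S^{4n-2,2n-1}$ is a unit multiple of $\eta$ --- is the technical heart of the whole theorem, and your sketch defers it rather than proves it. The proposed reduction through stunted spaces is circular as stated: stably $\Proj^k/\Proj^{k-2}\simeq\Cone(\lambda_k\eta)$, so saying that its $\eta$-inverted type ``depends only on the parity of $k$'' is a restatement of the desired determination of $\lambda_k$ modulo $h$, not an argument for it. A non-circular reduction needs real input. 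For example, one can identify $\Proj^k/\Proj^{k-2}\simeq\Th(\struct_{\Proj^1}(1)^{\oplus(k-1)})$ (contract $\Proj^k-\Proj^1$, a vector bundle over $\Proj^{k-2}$, or dually use purity along $\Proj^1$); comparing $k$ with $k-2$ then means trading the twist $\struct_{\Proj^1}(1)^{\oplus 2}$ for $\struct_{\Proj^1}(2)\oplus\struct_{\Proj^1}$, which Lemma~\ref{l:twq} permits via the Euler-type sequence, but one is still left having to discard the twist by $\struct_{\Proj^1}(2)$ --- and the statement that squares of line bundles give trivial twists after inverting $\eta$ is itself a nontrivial theorem, not a consequence of Corollary~\ref{c:K0_coh} (the classes differ in $\widetilde{K}_0(\Proj^1)$). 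The paper instead proves the even case directly: Lemma~\ref{HopfSuspended} exhibits, by an explicit unstable diagram, the map $H_{2n}$ (after collapsing a contractible subvariety of $\A^{2n}-0$) as $\Sigma^{2n-2}_T H_2$ in $\SH(k)$, and parity enters at one precise point: the comparison maps $\psi(x,t)=x/t$ and $\phi(x,t)=x/t^{2n-1}$ on $T\wedge\Gm_+$ differ by multiplication by $t^{2n-2}$, which is stably trivial only because it is a square (Lemmas~\ref{l:elematrix} and~\ref{lemma_square_symbol}). Your sketch contains no step where this parity distinction is detected; any reduction that worked ``uniformly in $k$'' would prove the same statement for odd $k$, which is false. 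Once this lemma (or the squares-of-line-bundles statement) is supplied, your route and the paper's become essentially the same induction, the paper's version merely replacing your odd-$k$ step by the localization sequence of $\struct_{\Proj^{2n-1}}(1)$, whose complement of the zero section is $\A^{2n}-0$ projecting via $H_{2n}$.
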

In order to obtain this theorem we consider the projection
\[
H_{2n}\colon (\A^{2n}-\{0\},(1,1,\ldots,1))\to \Proj^{2n-1}/\Proj^{2n-2}
\]
given by $H_{2n}(x_1,x_2,\ldots,x_{2n})=[x_1:x_2:\ldots:x_{2n}]$. It turns out that this projection is, up to canonical isomorphisms, a suspension of the Hopf map. Hence it induces an isomorphism on cohomology groups with inverted $\eta$. This isomorphism allows us to compute cohomology groups of projective spaces inductively. Note that real points of $H_{2n}$ give a morphism $S^{2n-1}\to S^{2n-1}$ and one can easily see that it corresponds to $2\in\pi_{2n-1}(S^{2n-1})$, while real points of the Hopf map give a two-folded covering $S^1\to S^1$.

In order to define push-forwards we adopt ideas arising from Atiyah duality and introduce cohomology groups twisted by a vector bundle as shifted cohomology of the Thom space of the vector bundle. Then, using Jouanolou's device, we define cohomology groups of a smooth variety $X$ twisted by a formal difference of vector bundles as shifted cohomology groups of an appropriate Thom space. These twisted groups are denoted by $A^{*,*}(X;E_1\ominus E_2)$. In particular, we have cohomology groups twisted by a complement to the tangent bundle, $A^{*,*}(X;\ominus T_X)$. The latter groups have nice functoriality properties and serve as a model of the cohomology of the stable normal bundle suspension spectra. 

It is well-known that one can define push-forwards along closed embeddings using deformation to the normal bundle (see, for example, \cite{PS03}). In our setting, for a closed embedding $f\colon Y\to X$ of codimension $c$ we obtain a push-forward map
\[
f_A\colon A^{*,*}(Y;\ominus T_Y)\to A^{*+2c,*+c}(X;\ominus T_X).
\]
This map coincides with $(f^\vee)^A$ described above up to some choices arising from the definition of twisted cohomology groups. In Theorem~\ref{t:twistedpbundlevect} we compute cohomology of $\mathbb{P}^n$ twisted by an arbitrary vector bundle assuming that the stable Hopf element is inverted. This result formally yields the computation for a twist by a formal difference of vector bundles, see Theorem~\ref{t:twistedpbundle}. In particular, we have the following theorem (for the general statement see loc.cit.).
\begin{thmIntr}
Let $A$ be a commutative ring $T$-spectrum and let $X$ be a smooth variety. Then the push-forward map
\[
A^{*,*}_\eta(X;\ominus T_X) \xrightarrow{i_A} A^{*+4n,*+2n}_\eta(\Proj^{2n}_X;\ominus T_{\Proj^{2n}_X})
\]
is a isomorphism. Here $a$ is a rational point on $\Proj^{2n}$ and $i\colon X\to \Proj^{2n}_X$ is the closed embedding given by $i(x)=(x,a)$. 
\end{thmIntr}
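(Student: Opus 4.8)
The plan is to realize $i_A$ as a purity isomorphism followed by an extension-of-supports map, and then to reduce its invertibility, via localization, to the vanishing of a single twisted cohomology group of an odd projective space.

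First I would compute the normal bundle of $i$. Writing $\Proj^{2n}_X=X\times\Proj^{2n}$ and $i(x)=(x,a)$, the normal bundle $N_i$ is the pullback to $X$ of the tangent space of $\Proj^{2n}$ at $a$, hence is the trivial bundle of rank $2n$; in particular $i$ has codimension $2n$ and the bidegree shift $(4n,2n)$ is the expected one. Recall that the push-forward along a closed embedding is built from homotopy purity, which identifies the supported group $A^{*+4n,*+2n}_{\eta,\,i(X)}(\Proj^{2n}_X;\ominus T_{\Proj^{2n}_X})$ with $A^{*,*}_\eta(X;N_i\ominus T_{\Proj^{2n}_X}|_{i(X)})$; since $T_{\Proj^{2n}_X}|_{i(X)}=T_X\oplus N_i$, this residual twist cancels canonically to $\ominus T_X$. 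Thus $i_A$ factors as
\[
A^{*,*}_\eta(X;\ominus T_X)\xrightarrow{\ \sim\ }A^{*+4n,*+2n}_{\eta,\,i(X)}(\Proj^{2n}_X;\ominus T_{\Proj^{2n}_X})\xrightarrow{\ e\ }A^{*+4n,*+2n}_\eta(\Proj^{2n}_X;\ominus T_{\Proj^{2n}_X}),
\]
the first arrow being an isomorphism by construction. It therefore suffices to show that the extension-of-supports map $e$ is an isomorphism after inverting $\eta$.

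Second, let $U=\Proj^{2n}_X\setminus i(X)$. Since $T_U=T_{\Proj^{2n}_X}|_U$, the localization long exact sequence shows that $e$ is an isomorphism precisely when $A^{*,*}_\eta(U;\ominus T_U)=0$. Projecting $\Proj^{2n}$ away from $a$ exhibits $U$ as the total space of the line bundle $\struct(1)$ over $\Proj^{2n-1}_X$, so by $\A^1$-invariance $U\simeq\Proj^{2n-1}_X$ and $T_U$ restricts to $T_{\Proj^{2n-1}}\oplus\struct(1)$. The problem is thus reduced to the vanishing
\[
A^{*,*}_\eta(\Proj^{2n-1}_X;\ominus T_{\Proj^{2n-1}}\ominus\struct(1))=0.
\]

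This vanishing is the heart of the argument and is exactly what the twisted projective bundle computation of Theorem~\ref{t:twistedpbundlevect} provides: heuristically, after absorbing the even part of the twist the total twist has odd determinant $\struct(\pm(2n+1))$, and the $\eta$-inverted cohomology of an odd projective space with an odd twist vanishes, mirroring the fact that the \emph{extra} summand in Theorem~\ref{Th:proj_coh} occurs only in the untwisted (even) case. To make this rigorous for an arbitrary commutative ring $T$-spectrum, where no $\struct$-orientation or determinant bookkeeping is available, I would argue by induction on $n$ along the cofiber sequences $\Proj^{m-1}_X\to\Proj^m_X\to\Proj^m_X/\Proj^{m-1}_X$, the decisive input being that the relevant connecting maps are, up to canonical identifications, suspensions of the Hopf map $H_{2n}$ and hence isomorphisms after inverting $\eta$, exactly as in the proof of Theorem~\ref{Th:proj_coh}. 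The main obstacle is precisely this bookkeeping: one must carry the twist $\ominus T_{\Proj^{2n-1}}\ominus\struct(1)$ coherently through each Thom isomorphism in the induction and verify that the odd/even alternation produces the cancellation forcing the group to vanish rather than to have rank one. Granting the vanishing, $e$ and hence $i_A$ are isomorphisms after inverting $\eta$.
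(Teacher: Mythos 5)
Correct, and essentially the paper's own argument: factoring $i_A$ as the purity isomorphism followed by extension of supports and then localizing to the complement $U\simeq\Proj^{2n-1}_X$ is precisely how the paper deduces case IIa) of Theorem~\ref{t:twistedpbundlevect} (and hence Theorem~\ref{t:twistedpbundle}, of which the stated theorem is the special case $\mathcal{V}=0$) from the vanishing statement Ia), which is the same essential input you invoke. The only slip is cosmetic: the residual twist on $U$ is $\ominus T_X\ominus\left(T_{\Proj^{2n-1}}\oplus\struct(1)\right)$ rather than $\ominus T_{\Proj^{2n-1}}\ominus\struct(1)$, which is harmless because Ia) in its formal-difference form (Theorem~\ref{t:twistedpbundle}) permits an arbitrary twist pulled back from the base $X$.
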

For the canonical projection $p\colon \Proj^{2n}_X\to X$ we define the push-forward map
\[
p_A=i_A^{-1}\colon A^{*+4n,*+2n}_\eta(\Proj^{2n}_X;\ominus T_{\Proj^{2n}_X})\to A^{*,*}_\eta(X;\ominus T_X) 
\]
as the isomorphism inverse to $i_A$. Then for a projective morphism $f$ of codimension $c$ we define push-forward map
\[
f_A\colon A^{*,*}_\eta(Y;\ominus T_Y)\to A^{*+2c,*+c}_\eta(X;\ominus T_X)
\]
representing $f$ as a composition $f=p\circ i$ of a closed embedding and a projection and taking push-forwards along these morphisms. This construction generalizes immediately to twisted cohomology groups (see Definition~\ref{def:projpush}) yielding for a codimension $c$ projective morphism $f\colon Y\to X$ and a formal difference of vector bundles $E_1\ominus E_2$ over $X$ push-forward map
\[
f_A\colon A^{*,*}_\eta (Y; f^*E_1\ominus (f^*E_2\oplus T_Y))\to A^{*+2c,*+c}_\eta (X; E_1\ominus (E_2\oplus T_X)).
\]
Using A.~Nenashev's constructions (which follow in general the ones introduced in \cite{PS09,S07a}) one can check that this definition does not depend on the choice of $p$ and $i$ and obtain the usual properties of push-forwards: functoriality, projection formula and compatibility with transversal base change.

The paper is organized in the following way. In Section~2 we recall some well known facts about motivic homotopy theory and representable cohomology theories. In the next section we introduce the language of the cohomology groups twisted by a vector bundle and check some basic properties, in particular, in Corollary~\ref{c:K0_coh} we show that twisted cohomology groups depend only on the class of the twist in reduced $K_0$. The main part of the paper is Section~4 where we compute twisted cohomology of projective spaces assuming that the stable Hopf element is inverted. Then we generalize the twisted cohomology setting to the case of the formal differences of vector bundles. In the last section we define push-forwards along projective morphisms and sketch its basic properties.

\section{Preliminaries on motivic homotopy theory}

In this section we recall some basic definitions and constructions in the nonstable and stable motivic homotopy categories $\Hp(k)$ and $\SH(k)$. We refer the reader to the foundational papers \cite{MV99,V98} for the details. 

Let $k$ be a field and let $\mathrm{Sm}/k$ be the category of smooth varieties over $k$. 

\begin{definition}
A motivic space over $k$ is a simplicial presheaf on $\mathrm{Sm}/k$. Every smooth variety $X$ defines a motivic space $\Hom_{\mathrm{Sm}/k}(-,X)$ constant in the simplicial direction. We will occasionally write $\pt$ for $\Spec k$ regarded as a motivic space. We use the injective model structure on the category of the pointed motivic spaces $M_\bullet(k)$. Inverting all the weak motivic equivalences in $M_\bullet(k)$ (see \cite{MV99}) we obtain the pointed motivic unstable homotopy category $\Hp(k)$.
\end{definition}

\begin{definition}
For a vector bundle $E$ over a smooth variety $X$ we put $\Th(E)=E/(E-X)\in \Hp(k)$ for \textit{the Thom space of $E$}.
\end{definition}

\begin{definition}
Let $X$ be a smooth variety and let $\triv^n_X$ be a trivialized vector bundle over $X$. An automorphism $\alpha\in \operatorname{Aut}(\triv^n_X)$ is \textit{elementary} if the corresponding matrix $A\in \operatorname{GL}_n(k[X])$ is elementary, i.e. belongs to the subgroup
\[
\operatorname{E}_n(X)= \langle\, \mathrm{I}_n+ g(x)e_{ij}\,|\, g(x)\in k[X],\, i\neq j \,\rangle\le \operatorname{GL}_n(k[X])
\]
generated by transvections (shear mappings). Here $\mathrm{I}_n$ is the identity matrix of size $n\times n$, $g(x)$ is a regular function and $e_{ij}$ is a matrix unit (matrix with $1$ at $(i,j)$ and $0$ everywhere else).

\noindent Typical examples of elementary matrices are given by a determinant $1$ matrix with the coefficients from the base field $k$ and by the matrix $\begin{pmatrix} s(x)^{-1} & 0 \\ 0 & s(x) \end{pmatrix}$ with $s(x)\in k[X]^*$.
\end{definition}

The following lemma is well known.

\begin{lemma} \label{l:elematrix}
Let $E$ be a vector bundle over a smooth variety $X$, $\triv^n_X$ be a trivialized vector bundle over $X$ and $\alpha\in \operatorname{Aut}(\triv^n_X)$ be an elementary automorphism. Then $\alpha\otimes \id\in \operatorname{Aut}(\triv^n_X\otimes E)= \operatorname{Aut}(E^{\oplus n})$ induces trivial automorphisms on $\Th(E^{\oplus n})$ and $(\Proj(E^{\oplus n})_+,+)$ in $\Hp(k)$.
\end{lemma}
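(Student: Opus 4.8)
The plan is to connect the automorphism induced by $\alpha\otimes\id$ to the identity by an explicit $\A^1$-homotopy, after reducing to a single transvection. First, recall that $\operatorname{E}_n(X)$ is generated by the transvections $\tau=\mathrm{I}_n+g(x)e_{ij}$ with $i\neq j$ and $g(x)\in k[X]$. Both $\Th(-)$ and $\Proj(-)_+$ turn a bundle automorphism into a self-map covariantly, so the map induced by a product of transvections is the composite of the maps induced by the factors. Since a composite of maps each equal to $\id$ in $\Hp(k)$ is again $\id$, it is enough to prove the claim for one transvection $\tau$.

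Next, I would build the homotopy over $X\times\A^1$. Writing $\pi\colon X\times\A^1\to X$ for the projection and $t$ for the coordinate on $\A^1$, the matrix $\mathrm{I}_n+t\,g(x)e_{ij}$ is again a transvection, now over $X\times\A^1$, and defines an automorphism of $\triv^n_{X\times\A^1}$; tensoring with $\id$ on $\pi^*E$ produces an automorphism $\Phi$ of $\pi^*(E^{\oplus n})=(\pi^*E)^{\oplus n}$ whose restriction to the fibre $\{t=t_0\}$ is $(\mathrm{I}_n+t_0\, g(x)e_{ij})\otimes\id$, equal to the identity for $t_0=0$ and to $\tau\otimes\id$ for $t_0=1$.

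The key step is to read $\Phi$ as a homotopy. Since the total space of $\pi^*(E^{\oplus n})$ is the total space of $E^{\oplus n}$ times $\A^1$ and the same holds for the complements of the zero sections, collapsing yields a natural identification $\Th(\pi^*(E^{\oplus n}))\cong \Th(E^{\oplus n})\wedge\A^1_+$, the reduced cylinder on $\Th(E^{\oplus n})$ that represents pointed $\A^1$-homotopies. Composing $\Th(\Phi)$ with the map $\Th(\pi^*(E^{\oplus n}))\to\Th(E^{\oplus n})$ induced by the bundle projection $\pi^*(E^{\oplus n})\to E^{\oplus n}$, I obtain a pointed map $H\colon \Th(E^{\oplus n})\wedge\A^1_+\to\Th(E^{\oplus n})$ whose restriction to $\{t=t_0\}$ is the self-map induced by $(\mathrm{I}_n+t_0\, g(x)e_{ij})\otimes\id$. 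Evaluating at $t_0=0$ and $t_0=1$ identifies $H$ as an $\A^1$-homotopy from $\id_{\Th(E^{\oplus n})}$ to $(\tau\otimes\id)_*$, so the latter is trivial in $\Hp(k)$. The argument for $(\Proj(E^{\oplus n})_+,+)$ is identical, using the identification $\Proj(\pi^*(E^{\oplus n}))\cong\Proj(E^{\oplus n})\times\A^1$ together with the cylinder $(\Proj(E^{\oplus n})\times\A^1)_+=\Proj(E^{\oplus n})_+\wedge\A^1_+$.

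The main obstacle is the third step: one has to check carefully that forming the Thom space commutes with the product by $\A^1$ and that the collapsed complement of the zero section is carried to the basepoint for every value of $t$, so that $H$ is a genuine pointed $\A^1$-homotopy and not merely a fibrewise family of maps. Once this identification is established, the endpoint computation and the reduction to transvections are purely formal.
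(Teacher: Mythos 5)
Your proposal is correct and takes essentially the same approach as the paper: the paper also produces an explicit $\A^1$-homotopy by inserting the parameter $t$ into the transvections (it scales all factors of the product $\prod_k t_{i_kj_k}(g_k(x))$ to $\prod_k t_{i_kj_k}(tg_k(x))$ at once, rather than first reducing to a single transvection), and then concludes by homotopy invariance. The only cosmetic difference is in the last step, where the paper argues via the maps $\overline{p},\overline{i}_0,\overline{i}_1$ between $\Th(p^*\triv^n_X)$ and $\Th(\triv^n_X)$ and the relation $\overline{i}_0=\overline{i}_1=\overline{p}^{-1}$ in $\Hp(k)$, which is exactly the mechanism underlying your cylinder identification $\Th(\pi^*(E^{\oplus n}))\cong\Th(E^{\oplus n})\wedge\A^1_+$.
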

\begin{proof}
The proof does not depend on the vector bundle $E$, so we assume that $E=\triv_X$. Moreover, the proofs for the Thom space and for the projective bundle are quite the same, so we give the detailed proof only in the first case.

The matrix $A$ corresponding to $\alpha$ can be represented as a product 
\[
A=\prod_k t_{i_kj_k}(g_k(x)),
\]
where $t_{i_kj_k}(g_k(x))=\mathrm{I}_n + g_k(x)e_{i_kj_k}$. Put
\[
A(t)=\prod_k t_{i_kj_k}(tg_k(x))\in \operatorname{GL}_n(k[X][t]).
\]
Denote $p\colon X\times \mathbb{A}^1\to X$ the projection and let $i_0,i_1\colon X\to X\times\mathbb{A}^1$ be the embeddings given by $i_0(x)=(x,0)$ and $i_1(x)=(x,1)$. These maps induce isomorphisms
\[
\overline{p}\colon \Th(p^*\triv^n_X)\xrightarrow{\simeq} \Th(\triv^n_X),\quad \overline{i}_0,\overline{i}_1\colon \Th(\triv^n_X)\xrightarrow{\simeq} \Th(p^*\triv^n_X).
\]
We have $\overline{p}\overline{i}_0=\overline{p}\overline{i}_1=\id_{\Th(\triv^n_X)}$, hence $\overline{i}_0=\overline{i}_1=\overline{p}^{-1}$ in $\Hp(k)$. On the other hand $A(t)$ defines an automorphism $\overline{A}(t)\colon \Th(p^*\triv^n_X)\xrightarrow{\simeq} \Th(p^*\triv^n_X)$ and 
\[
\id_{\Th(\triv^n_X)}=\overline{p}\overline{A}(t)\overline{i}_0=\overline{p}\overline{A}(t)\overline{i}_1=\overline{\alpha},
\]
where $\overline{\alpha}\colon \Th(\triv^n_X)\xrightarrow{\simeq} \Th(\triv^n_X)$ is the automorphism induced by $\alpha$.
\end{proof}

\begin{definition}
Let $T=\A^1/(\A^1-\{0\})$ be the Morel-Voevodsky object. A $T$-spectrum \cite{Jar00} $M$ is a sequence of pointed motivic spaces $(M_0,M_1,M_2,\dots)$ equipped with structural maps $\sigma_n  \colon T\wedge M_n\to M_{n+1}$. A map of $T$-spectra is a sequence of maps of pointed motivic spaces which is compatible with the structure maps. Inverting the stable motivic weak equivalences as in \cite{Jar00} we obtain the motivic stable homotopy category $\SH(k)$.

A pointed motivic space $Y$ gives rise to a suspension $T$-spectrum $\Sigma^{\infty}_T Y$. Denote $\SSp=\Sigma^{\infty}_T (\pt_+)$ the spherical spectrum. Both $\Hp(k)$ and $\SH(k)$ are equipped with symmetric monoidal structures $(\wedge,\pt_+)$ and $(\wedge,\SSp)$ respectively and $\Sigma^{\infty}_T\colon \Hp(k)\to \SH (k)$ is a strict symmetric monoidal functor. 
\end{definition}

\begin{definition}
There are two spheres in $M_\bullet(k)$, $S^{1,0}=\Delta^1/\partial(\Delta^1)$ and $S^{1,1}=(\Gm,1)$. Here we follow the notation and indexing introduced in \cite[p.111]{MV99}. For the integers $p,q\ge 0$ we write $S^{p+q,q}$ for $(S^{1,0})^{\wedge p} \wedge (S^{1,1})^{\wedge q} $ and $\Sigma^{p+q,q}$ for the suspension functor $-\wedge S^{p+q,q}$. This functor becomes invertible in the stable homotopy category $\SH(k)$, so we extend the notation to arbitrary integers $p,q$ in an obvious way. 
\end{definition}

\begin{definition}
Any $T$-spectrum $A$ defines a bigraded cohomology theory on the category of pointed motivic spaces. Namely, for a pointed motivic space $Y$ one sets
\[
A^{p,q}(Y)=\Hom_{\SH(k)}(\Sigma^\infty_T Y,\Sigma^{p,q}A)
\]
and $A^{*,*}(Y)=\bigoplus_{p,q}A^{p,q}(Y)$. We can regard a smooth variety $X$ as an externally pointed motivic space $(X_+,+)$. Put $A^{p,q}(X)=A^{p,q}(X_+,+)$. In case of $i-j,j\ge 0$ there is a suspension isomorphism $A^{p,q}(Y)\cong A^{p+i,q+j}(\Sigma^{i,j}Y)$ induced by the shuffling $S^{p,q}\wedge S^{i,j}\cong S^{p+i,q+j}$. In the motivic homotopy category there is a canonical isomorphism $T\cong S^{2,1}$ \cite[Lemma 2.15]{MV99}, we write $\Sigma_T\colon A^{*,*}(Y)\xrightarrow{\simeq} A^{*+2,*+1}(Y\wedge T)$ for the corresponding suspension isomorphism.
\end{definition}

\begin{definition} \label{d:coht}
A {\it commutative ring $T$-spectrum} is a commutative monoid $A$ in $(\SH(k),\wedge,\SSp)$. We recall some properties of the cohomology theory represented by a commutative ring $T$-spectrum $A$ that we are going to use in the paper.

\begin{enumerate}

\item \textit{Localization:} let $i\colon Z\to X$ be a closed embedding of varieties with smooth $X$. Denote $j\colon U=X-Z\to X$ the open complement and $z\colon X\to X/U$ the quotient map. Then there is a long exact sequence
\[
\dots \xrightarrow{\partial} A^{*,*}(X/U)\xrightarrow{z^A} A^{*,*}(X)\xrightarrow{j^A} A^{*,*}(U)\xrightarrow{\partial} A^{*+1,*}(X/U)\xrightarrow{z^A}\dots
\]
More generally, let $j\colon W\to Y$ be a cofibration of motivic spaces, denote $z\colon Y\to Y/W$ the quotient map. Then there is a long exact sequence
\[
\dots \xrightarrow{\partial} A^{*,*}(Y/W)\xrightarrow{z^A} A^{*,*}(Y)\xrightarrow{j^A} A^{*,*}(W)\xrightarrow{\partial} A^{*+1,*}(Y/W)\xrightarrow{z^A}\dots
\]

\item \textit{Homotopy invariance:} for an $\A^n$-bundle $p\colon E\to X$ over a smooth variety $X$ the induced homomorphism $p^A\colon A^{*,*}(X)\to A^{*,*}(E)$ is an isomorphism.

\item \textit{Mayer-Vietoris:} if $X=U_1\cup U_2$ is a union of two open subsets $U_1$ and $U_2$ then there is a natural long exact sequence
\[
\dots \to A^{*,*}(X)\to A^{*,*}(U_1)\oplus A^{*,*}(U_2)\to A^{*,*}(U_1\cap U_2)\to A^{*+1,*}(X)\to \dots
\]

\item \textit{Cup-product:} for a pointed motivic space $Y$ we have a functorial graded ring structure
\[
\cup\colon A^{*,*}(Y)\times A^{*,*}(Y)\to A^{*,*}(Y).
\]
Moreover, let $i_1\colon Z_1\to X$ and $i_2\colon Z_2\to X$ be closed embeddings of varieties with $X$ being smooth. Then we have a functorial, bilinear and associative cup-product
\[
\cup \colon A^{*,*}(X/(X-Z_1))\times A^{*,*}(X/(X-Z_2)) \to A^{*,*}(X/(X-Z_1\cap Z_2)).
\]
In particular, setting $Z_1=X$ we obtain an $A^{*,*}(X)$-module structure on $A^{*,*}(X/(X-Z_2))$. All the morphisms in the localization sequence are homomorphisms of $A^{*,*}(X)$-modules.

\item \textit{Module structure over stable cohomotopy groups:} for every motivic space $Y$ the $\wedge$--product in $\SH(k)$ defines an $\SSp^{*,*}(\pt)$-module structure on $A^{*,*}(Y)$. For a smooth variety $X$ the ring $A^{*,*}(X)$ is a graded $\SSp^{*,*}(\pt)$--algebra with the unit $1$ being the image of the identity map under the composition $\SSp^{*,*}(\pt)\to \SSp^{*,*}(X)\to A^{*,*}(X)$. Here the first morphism is induced by the projection $X\to \pt$ and the second one is given by the unit morphism $\SSp\to A$.

\item \textit{Push-forwards for closed embeddings:} let $i\colon Y\to X$ be a closed embedding of smooth varieties with a rank $n$ normal bundle $N_i$ and let $E$ be a vector bundle over $X$. Denote $i^E\colon Y\to E$ the composition of $i$ and the zero section of $E$. Note that $N_{i^E} \cong i^*E \oplus N_i$. The homotopy purity property \cite[Section 3, Theorem 2.23]{MV99} gives a canonical isomorphism 
\[
d_{i^E}\colon E/(E-Y)\xrightarrow{\simeq} \Th(N_{i^E}) \cong \Th(i^*E\oplus N_i).
\]
Let $z\colon E/(E-X)\to E/(E-Y)$ be the quotient map. Then 
\[
i^E_A=z^A\circ d^A_{i^E}\colon A^{*,*}(\Th(i^*E\oplus N_i))\to A^{*,*}(\Th(E))
\]
is the \textit{push-forward map}. We will usually omit vector bundle from the notation for the push-forward map and write $i_A=i^E_A$.

\end{enumerate}
\end{definition}

\begin{definition} \label{d:hopf}
The \textit{Hopf map} is the morphism of pointed motivic spaces
\[
H\colon (\A^2-\{0\},(1,1)) \to (\Proj^1,[1:1]),\quad H(x,y)=[x:y].
\]
Using canonical isomorphisms $(\A^2- 0,(1,1))\cong S^{3,2}$ (see \cite[Example 2.20]{MV99}) and $(\Proj^1,[1:1])\cong S^{2,1}$ (see \cite[Lemma 2.15 and Corollary 2.18]{MV99}) we may regard $H$ as an element of $\Hom_{\Hp(k)}(S^{3,2},S^{2,1})$. The \textit{stable Hopf element} is the unique $\eta \in \SSp^{-1,-1}(\pt)$ satisfying $\Sigma^{3,2}\eta=\Sigma^\infty_T H$, i.e. $\eta$ is the stabilization of $H$ moved to $\SSp^{-1,-1}(\pt)$ via the canonical isomorphisms. For a commutative ring $T$-spectrum $A$ we will usually denote by the same letter $\eta$ the corresponding element $\eta\in A^{-1,-1}(\pt)$ in the coefficient ring.
\end{definition}

\begin{definition}
Let $A$ be a commutative ring $T$-spectrum and let $Y$ be a pointed motivic space. Denote
\[
A^{*,*}_\eta(Y)=A^{*,*}(Y)\otimes_{\SSp^{*,*}(\pt)} ({\SSp^{*,*}(\pt)}[\eta^{-1}]).
\]
One can easily check that $A^{*,*}_\eta(-)$ is a cohomology theory and satisfies properties from Definition~\ref{d:coht}. We refer to $A^{*,*}_\eta(-)$ as \textit{cohomology theory $A$ with inverted stable Hopf element}.
\end{definition}

\begin{remark} \label{r:hopfcone}
It is well known that $\eta\in A^{-1,-1}(\pt)$ is invertible if and only if the morphism $p^A\colon A^{*,*}(\pt)\xrightarrow{\simeq} A^{*,*}(\Proj^2)$ induced by the projection $p\colon \Proj^2\to \pt$ is an isomorphism. This follows from the fact that the mapping cone of the Hopf map is equivalent to $(\Proj^2,[0:0:1])$ \cite[Lemma~6.2.1]{Mor04}.
\end{remark}

\section{Cohomology twisted by a vector bundle}
In this section we discuss the language of \textit{twisted cohomology groups}. For a vector bundle $E$ over $X$ twisted cohomology groups are defined to be the cohomology groups of $E$ supported on $X$ and shifted by an appropriate degree. If the cohomology theory is oriented, then the twisted cohomology groups canonically coincide with the ordinary ones, but in general they are quite different. A handy property of these groups is that they depend only on the class $[E]$ in $\widetilde{K}_0(X)$.

\begin{definition} \label{d:tw}
Let $E$ be a vector bundle of rank $n$ over a smooth variety $X$ and let $A$ be a commutative ring $T$-spectrum. Denote
\[
A^{*,*}(X;E)=A^{*+2n,*+n}(\Th(E))
\]
and refer to it as \textit{cohomology groups of $X$, twisted by $E$}. For a morphism of smooth varieties $f\colon Y\to X$ denote
\[
f^A=f^A_E\colon A^{*,*}(X;E)\to A^{*,*}(Y;f^*E)
\] 
the homomorphism induced by the corresponding map $\Th(f^*E)\to \Th(E)$. For a sequence of morphisms of smooth varieties $Z\xrightarrow{g} Y\xrightarrow{f} X$ one clearly has 
\[
(f\circ g)^A=g^A\circ f^A\colon A^{*,*}(X;E)\to A^{*,*}(Z;(fg)^*E).
\]
For an isomorphism of vector bundles $\theta\colon E\to E'$ denote
\[
\theta^A\colon A^{*,*}(X;E')\to A^{*,*}(X;E)
\] 
the homomorphism induced by the corresponding map $\Th(E)\to \Th(E')$.

Using this notation we rewrite push-forwards for closed embeddings introduced in Definition~\ref{d:coht} as
\[
i_A\colon A^{*,*}(Y;i^*E\oplus N_i)\to A^{*+2n,*+n}(X;E).
\]
\end{definition}

\begin{remark}
This notation is inspired by the next observation which I learned from Ivan Panin who attributed it to Charles Walter. Let $L$ be a line bundle over a smooth variety $X$. Then for derived Witt groups introduced by Paul Balmer \cite{Bal99} one has a canonical isomorphism $\W^{*+1}_X(L)\cong \W^*(X;L)$ of derived Witt groups with support and derived Witt groups with the twisted duality \cite[Theorem 2.5]{Ne07}. Thus for a general cohomology theory $A$ and a line bundle $L$ over a smooth variety $X$ one can introduce twisted cohomology groups as cohomology groups with support: $A(X;L)=A_X(L)$. See also \cite[Definition~2.2]{Zib11} where the same notation is used in hermitian $K$-theory and real topological $K$-theory.
\end{remark}

\begin{remark}
\label{r:untwist}
For an oriented cohomology theory \cite{PS03} one has Thom isomorphisms $A^{*,*}(X;E)\cong A^{*,*}(X)$ functorial in $X$. For symplectically and $\SL$--oriented cohomology theories \cite{PW10a,PW10b,An12} one has similar Thom isomorphisms for symplectic and special linear vector bundles respectively.
\end{remark}

\begin{definition} \label{d:cup}
Let $E_1$ and $E_2$ be vector bundles over a smooth variety $X$. The cup-product
\begin{multline*}
A^{*,*}((E_1\oplus E_2)/(E_1\oplus E_2 - E_2))\times A^{*,*}((E_1\oplus E_2)/(E_1\oplus E_2 - E_1)) \xrightarrow{\cup} \\
\xrightarrow{\cup} A^{*,*}((E_1\oplus E_2)/(E_1\oplus E_2 - X))
\end{multline*}
combined with the isomorphisms
\begin{gather*}
A^{*,*}((E_1\oplus E_2)/(E_1\oplus E_2 - E_2))\cong A^{*,*}(E_1/(E_1 - X)),\\
A^{*,*}((E_1\oplus E_2)/(E_1\oplus E_2 - E_1))\cong A^{*,*}(E_2/(E_2 - X))
\end{gather*}
induced by contractions of respective bundles gives a product 
\[
\cup \colon A^{*,*}(X;E_1)\times A^{*,*}(X;E_2) \to A^{*,*}(X;E_1\oplus E_2).
\]
\end{definition}

\begin{lemma}\label{l:dual_coh}
Let $E$ be a vector bundle over a smooth variety $X$. Then for the dual vector bundle $E^\vee$ there is a natural isomorphism of $A^{*,*}(X)$-modules
\[
A^{*,*}(X;E)\cong A^{*,*}(X;E^\vee).
\]
\end{lemma}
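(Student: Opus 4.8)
The plan is to deduce the statement from a natural isomorphism of Thom spaces $\Th(E)\cong\Th(E^\vee)$ in $\Hp(k)$. Indeed, by Definition~\ref{d:tw} one has $A^{*,*}(X;E)=A^{*+2n,*+n}(\Th(E))$ and $A^{*,*}(X;E^\vee)=A^{*+2n,*+n}(\Th(E^\vee))$ for $n=\rank E$, so a morphism of motivic spaces realizing the identification will automatically induce an $A^{*,*}(X)$-linear map (the module structure of Definition~\ref{d:coht}(4) is induced from space-level maps) that is natural in $X$. Thus I would reduce the whole statement to producing such a Thom space isomorphism, and everything else becomes formal.

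To build it I would glue local pieces. Fix a Zariski cover $\{U_\alpha\}$ trivializing $E$, with transition matrices $g_{\alpha\beta}\in\operatorname{GL}_n(k[U_{\alpha\beta}])$; the dual trivializations then give $E^\vee$ the transition matrices $(g_{\alpha\beta}^T)^{-1}$. Over each $U_\alpha$ both bundles are trivialized, so there is an evident identification $\Th(E)|_{U_\alpha}\cong\Th(\triv^n_{U_\alpha})\cong\Th(E^\vee)|_{U_\alpha}$. The only obstruction to gluing these into a global map is the discrepancy over the overlaps, which is the automorphism of $\triv^n_{U_{\alpha\beta}}$ with matrix $g_{\alpha\beta}^{}g_{\alpha\beta}^T$ (up to inverse): a symmetric matrix whose determinant $\det(g_{\alpha\beta})^{2}$ is a square. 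The line bundle case is the transparent prototype, where the gluing isomorphism is realized geometrically by the fibrewise inversion $v\mapsto v^{-1}$ on $E\setminus X$, and the discrepancy is multiplication by the square $g_{\alpha\beta}^{2}$.

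The heart of the argument, which I expect to be the main obstacle, is therefore to check that an automorphism of $\triv^n$ whose determinant is a square induces the identity on $\Th(\triv^n)$ in $\Hp(k)$; granting this, the local identifications agree over overlaps in $\Hp(k)$ and assemble, via Mayer--Vietoris, into the desired global isomorphism. For the determinant-one part this is precisely Lemma~\ref{l:elematrix}: a matrix in $\operatorname{E}_n$ acts trivially because each transvection is linked to the identity by the polynomial homotopy $A(t)$ used in that proof. The genuinely new input is the contribution of the determinant, which is \emph{not} reachable by a polynomial homotopy (such a homotopy forces determinant one). The action of $\operatorname{GL}_n(k[U])$ on $\Th(\triv^n)$ factors, modulo the elementary subgroup, through $\det$ followed by the $\GW$-valued symbol $s\mapsto\langle s\rangle$, and $\langle s^2\rangle=\langle 1\rangle$; so a square determinant kills exactly this remaining contribution. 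I would make this precise by writing $g_{\alpha\beta}g_{\alpha\beta}^T$, after stabilizing by a trivial summand if necessary, as a product of elementary matrices and diagonal factors of the shape $\operatorname{diag}(s^{-1},s)$ together with a scalar square factor, the former handled by Lemma~\ref{l:elematrix} and the latter by triviality of the square symbol, and then verify that the isomorphism class of the resulting gluing is independent of the trivializations, so that it is natural in $X$.
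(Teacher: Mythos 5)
Your proposal has two genuine gaps, and the first is fatal to the strategy as written. You propose to produce local identifications $\Th(E)|_{U_\alpha}\cong\Th(E^\vee)|_{U_\alpha}$ whose discrepancies on overlaps are trivial \emph{in the homotopy category}, and then to ``assemble, via Mayer--Vietoris,'' a global isomorphism. But morphisms in $\Hp(k)$ (or $\SH(k)$) do not satisfy descent: maps that agree on overlaps only up to homotopy cannot be glued, and with a cover by more than two open sets one would need coherent choices of homotopies and homotopies between homotopies. Mayer--Vietoris and the five lemma can verify that an \emph{already existing, globally defined} map induces an isomorphism on cohomology; they cannot manufacture the map. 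So even granting your triviality claim for the transition discrepancies, there is no mechanism in your argument that produces a global morphism (of spaces or of cohomology groups) to which naturality and $A^{*,*}(X)$-linearity could then be attached.

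The second gap is the triviality claim itself. Modulo the elementary subgroup (and after stabilization), the class of the discrepancy $g_{\alpha\beta}g_{\alpha\beta}^T$ in $K_1$ is $2[g_{\alpha\beta}]$, which differs from the class of a scalar square $\operatorname{diag}(\det(g_{\alpha\beta})^2,1,\dots,1)$ by an element of $SK_1(k[U_{\alpha\beta}])$; for a general smooth (affine) variety $SK_1$ does not vanish, and nothing in the paper (Lemma~\ref{l:elematrix} covers only $\operatorname{E}_n$) shows that $SK_1$-classes act trivially on Thom spaces, so your proposed factorization ``elementary $\cdot$ hyperbolic diagonal $\cdot$ scalar square'' is not available. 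Moreover, the square-symbol triviality (Lemma~\ref{lemma_square_symbol}) is proved only in $\SH(k)$ — its proof uses a stable splitting — whereas you assert triviality in $\Hp(k)$, which is strictly stronger. The paper's own proof sidesteps all of this: it uses the canonical incidence variety $Y=\{(v,f)\in E\times_X E^\vee \mid f(v)=1\}$, observes that $Y\to E^0$ is an affine bundle, and applies the five lemma to the localization sequences of the pairs $(E,E^0)$ and $(E\times_X E^\vee, Y)$ to identify both $A^{*,*}(X;E)$ and, by the evident symmetry, $A^{*,*}(X;E^\vee)$ with $A^{*,*}((E\times_X E^\vee)/Y)$ — a single globally defined, choice-free comparison, requiring no gluing and no $K$-theoretic input. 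Your fibrewise-inversion picture for line bundles is exactly the rank-one shadow of this construction ($Y$ is then the graph of inversion); the incidence variety is how it generalizes when fibrewise inversion no longer exists.
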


\begin{proof}
Denote $n=\rank E$ and let $E^0$ be the complement to the zero section of $E$. Consider variety 
\[
Y=\left\{\left.(v,f)\in E\times_X E^\vee\,\right|\, f(v)=1\right\}.
\]
The projection $Y\to E^0$ identifies $Y$ with an affine bundle over $E^0$ yielding $A^{*,*}(E^0)\cong A^{*,*}(Y)$. The projection $E\times_X E^\vee\to E$ induces a homomorphism
\[
\alpha\colon A^{*-2n,*-n}(X;E)\to A^{*,*}((E\times_X E^\vee)/Y).
\]
Consider the following localization sequences.
\[
\xymatrix @R=1pc @C=1pc{
\dots \ar[r] & A^{*-2n,*-n}(X;E) \ar[r]\ar[d]^\alpha & A^{*,*}(X) \ar[r]\ar[d]^\simeq & A^{*,*}(E^0) \ar[r] \ar[d]^\simeq & \dots \\
\dots \ar[r] & A^{*,*}((E\times_X E^\vee)/Y) \ar[r]  & A^{*,*}(E\times_X E^\vee) \ar[r] & A^{*,*}(Y) \ar[r] & \dots \\
}
\]
The five lemma yields that $\alpha$ is an isomorphism. Applying the same reasoning to $E^\vee$ one obtains an isomorphism $A^{*-2n,*-n}(X;E)\cong A^{*,*}((E\times_X E^\vee)/Y)$ whence the claim.
\end{proof}

\begin{lemma}
\label{l:twisted_cancel}
Let $E_1$ and $E_2$ be vector bundles over a smooth variety $X$. Denote $p\colon E_1-X\to X$ the canonical projection and suppose that there exists some $\thc\in A^{*,*}(X;E_2)$ such that 
\[
A^{*,*}(X)\xrightarrow{-\cup \thc} A^{*,*}(X;E_2),\quad A^{*,*}(E_1-X)\xrightarrow{-\cup p^A (\thc)} A^{*,*}(E_1-X;p^*E_2)
\]
are isomorphisms. Then 
\[
A^{*,*}(X;E_1)\xrightarrow{-\cup \thc} A^{*,*}(X;E_1\oplus E_2)
\]
is an isomorphism as well.
\end{lemma}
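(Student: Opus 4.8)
The plan is to mimic the proof of Lemma~\ref{l:dual_coh}: build two localization long exact sequences, connect them by a ladder whose vertical arrows are $-\cup\thc$, and conclude with the five lemma. Throughout write $\pi\colon E_1\to X$ for the projection, $p=\pi|_{E_1-X}\colon E_1-X\to X$ for its restriction, and $n_i=\rank E_i$.

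First I would write down the localization sequence attached to the zero section $X\hookrightarrow E_1$ with open complement $E_1-X$, obtained by applying $A^{*,*}(-)$ (Definition~\ref{d:coht}(1)) to the cofibre sequence $(E_1-X)_+\to (E_1)_+\to \Th(E_1)$. Homotopy invariance (Definition~\ref{d:coht}(2)) identifies $A^{*,*}(E_1)\cong A^{*,*}(X)$, and by Definition~\ref{d:tw} the term $A^{*,*}(\Th(E_1))$ is $A^{*,*}(X;E_1)$ up to a degree shift. Call this Sequence~A; its three kinds of terms are (up to shifts) $A^{*,*}(X;E_1)$, $A^{*,*}(X)$ and $A^{*,*}(E_1-X)$.

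Next I would produce the ``$E_2$--twisted'' analogue. Consider the bundle $\pi^*E_2$ over $E_1$, whose restriction to the zero section is $E_2$ and to $E_1-X$ is $p^*E_2$. The inclusion $E_1-X\hookrightarrow E_1$ induces a cofibration $\Th(p^*E_2)\to \Th(\pi^*E_2)$, to which I apply the cofibration form of Definition~\ref{d:coht}(1); this is Sequence~B. Homotopy invariance along the $\A^{n_1}$--bundle $\pi$ gives $A^{*,*}(\Th(\pi^*E_2))\cong A^{*,*}(\Th(E_2))$, which is $A^{*,*}(X;E_2)$ up to shift; directly $A^{*,*}(\Th(p^*E_2))$ is $A^{*,*}(E_1-X;p^*E_2)$ up to shift. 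The crucial point is the cofibre: working inside the total space $E_1\oplus E_2=\{(v,w)\}$, one has $\Th(\pi^*E_2)=(E_1\oplus E_2)/\{w\neq 0\}$, and passing to $\Th(\pi^*E_2)/\Th(p^*E_2)$ additionally collapses the locus $\{v\neq 0\}$; since $\{w\neq 0\}\cup\{v\neq 0\}$ is exactly the complement of the zero section $X=\{v=w=0\}$, one obtains $\Th(\pi^*E_2)/\Th(p^*E_2)\cong \Th(E_1\oplus E_2)$, so the left term is $A^{*,*}(X;E_1\oplus E_2)$ up to shift. I would then connect Sequence~A to Sequence~B by $-\cup\thc$ on the left and middle terms and $-\cup p^A(\thc)$ on the right. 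By hypothesis the middle vertical $A^{*,*}(X)\to A^{*,*}(X;E_2)$ (transported through homotopy invariance, using functoriality of the cup product so that it commutes with $\pi^A$) and the right vertical $A^{*,*}(E_1-X)\to A^{*,*}(E_1-X;p^*E_2)$ are isomorphisms, while the left vertical is precisely the map $A^{*,*}(X;E_1)\xrightarrow{-\cup\thc}A^{*,*}(X;E_1\oplus E_2)$ of Definition~\ref{d:cup}. The five lemma then finishes the argument, \emph{provided the ladder commutes}.

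The main obstacle is exactly this commutativity, in particular at the squares involving the connecting homomorphisms $\partial$. I would handle it by realizing all three vertical arrows as induced by a single map of the underlying cofibre sequences: cupping with the fixed class $\thc$ is induced by the Thom diagonal $\Th(E_1\oplus E_2)\to \Th(E_1)\wedge\Th(E_2)$, followed by $\id\wedge\thc$ and the multiplication of $A$, and these Thom diagonals are natural with respect to the inclusion $E_1-X\hookrightarrow E_1$ and the collapse identification of the previous paragraph. Naturality of $\partial$ for a map of cofibre sequences then yields commutativity of every square, and the only remaining check is that the resulting left-hand vertical coincides with the cup product of Definition~\ref{d:cup} under the identification $\Th(\pi^*E_2)/\Th(p^*E_2)\cong\Th(E_1\oplus E_2)$.
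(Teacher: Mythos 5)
Your proposal is correct and takes essentially the same route as the paper: the paper's proof is exactly the ladder comparing the localization sequence of the zero section of $E_1$ with its twisted version (your Sequence~B, including the same identification of the cofibre of $\Th(p^*E_2)\to\Th(\pi^*E_2)$ with $\Th(E_1\oplus E_2)$), followed by the five lemma. The only difference is one of detail: the paper leaves the construction of the twisted sequence and the commutativity of the ladder implicit, relying on the cup-product with supports and the $A^{*,*}(X)$-module properties of localization sequences recorded in Definition~\ref{d:coht}, whereas you verify these points explicitly.
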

\begin{proof}
Consider the localization sequence for the zero section of $E_1$ and its twisted version:
\[
\xymatrix @R=1pc @C=1pc{
\dots \ar[r] & A^{*,*}(X;E_1) \ar[r] \ar[d]_{-\cup \thc} & A^{*,*}(X) \ar[r] \ar[d]_{-\cup \thc}^\simeq & A^{*,*}(E_1-X) \ar[r] \ar[d]_{-\cup p^A(\thc)}^\simeq & \dots \\
\dots \ar[r] & A^{*,*}(X;E_1\oplus E_2) \ar[r] & A^{*,*}(X;E_2) \ar[r] & A^{*,*}(E_1-X;p^*E_2) \ar[r] & \dots 
}
\]
The claim follows via the five lemma.
\end{proof}

\begin{corollary} \label{c:sluntwist}
Let $E$ be a vector bundle over a smooth variety $X$ and $A^{*,*}(-)$ be an $\SL$--oriented cohomology theory in the sense of \cite[Definition 5.1]{PW10b} represented by a commutative ring $T$-spectrum $A$. Then there is a canonical isomorphism
\[
A^{*,*}(X;E)\cong A^{*,*}(X;\det E).
\]
\end{corollary}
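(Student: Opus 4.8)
The plan is to reduce the statement to the cancellation Lemma~\ref{l:twisted_cancel}, with the Thom classes supplied by the $\SL$-orientation. Recall that an $\SL$-orientation in the sense of \cite[Definition 5.1]{PW10b} attaches to every rank $m$ vector bundle $V$ over $X$ equipped with a trivialization $\lambda\colon \det V\xrightarrow{\simeq}\triv_X$ of its determinant a Thom class $\thc_{(V,\lambda)}\in A^{0,0}(X;V)=A^{2m,m}(\Th(V))$ for which $-\cup\thc_{(V,\lambda)}\colon A^{*,*}(X)\to A^{*,*}(X;V)$ is an isomorphism, naturally in $X$. Such a pair $(V,\lambda)$ is precisely the data needed to run Lemma~\ref{l:twisted_cancel}: taking $E_2=V$ and $\thc=\thc_{(V,\lambda)}$, the first map in the hypothesis is this Thom isomorphism, and the second is the Thom isomorphism of the pulled-back special linear bundle over the smooth open subvariety $E_1-X$, which by naturality is exactly $-\cup p^A(\thc)$. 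The lemma therefore yields, for every vector bundle $E_1$ and every special linear $(V,\lambda)$, an isomorphism $A^{*,*}(X;E_1)\xrightarrow{\simeq}A^{*,*}(X;E_1\oplus V)$, i.e.\ a special linear summand may be cancelled from a twist.

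Writing $L=\det E$, the two special linear bundles I would cancel are $L^\vee\oplus L$ and $E\oplus L^\vee$, both of which carry a \emph{canonical} trivialization of their determinant: $\det(L^\vee\oplus L)\cong L^\vee\otimes L\cong\triv_X$ and $\det(E\oplus L^\vee)\cong \det E\otimes L^\vee\cong\triv_X$ via the evaluation pairings. First I would cancel $L^\vee\oplus L$ against $E_1=E$; then reorder the summands, an isomorphism of bundles inducing an isomorphism on twisted cohomology through the maps $\theta^A$ of Definition~\ref{d:tw}; and finally cancel the special linear bundle $E\oplus L^\vee$ against $E_1=L$. This produces the chain
\[
A^{*,*}(X;E)\xrightarrow{\simeq}A^{*,*}(X;E\oplus L^\vee\oplus L)\cong A^{*,*}(X;L\oplus(E\oplus L^\vee))\xleftarrow{\simeq}A^{*,*}(X;L),
\]
whose composite is the asserted isomorphism $A^{*,*}(X;E)\cong A^{*,*}(X;\det E)$. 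Note that all arrows preserve the twisted bidegree, since each Thom class sits in twisted degree $(0,0)$ and the cup-product of Definition~\ref{d:cup} is additive in the twisted grading.

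The hypotheses of Lemma~\ref{l:twisted_cancel} are easily verified: $E_1-X$ is smooth, being open in the total space of $E_1$, and the compatibility of $\SL$-oriented Thom classes with pullback is built into the $\SL$-orientation, so both maps demanded by the lemma are genuine Thom isomorphisms. The one point I expect to require care is \textbf{canonicity}, which the corollary explicitly claims: the Thom classes, hence the cancellation isomorphisms, depend on the chosen trivializations of the determinants, so I would stress that the trivializations of $\det(L^\vee\oplus L)$ and $\det(E\oplus L^\vee)$ used above are the intrinsic evaluation isomorphisms and involve no local choices. This guarantees that the composite isomorphism is canonical and natural in $X$.
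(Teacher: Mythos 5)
Your proposal is correct and follows essentially the same route as the paper: both cancel the canonically trivialized special linear bundles $\det E^\vee\oplus\det E$ and $E\oplus\det E^\vee$ via Lemma~\ref{l:twisted_cancel}, using the $\SL$-orientation's Thom isomorphisms (Remark~\ref{r:untwist}) to supply the classes $\thc$, yielding the chain $A^{*,*}(X;E)\cong A^{*,*}(X;E\oplus \det E^{\vee} \oplus \det E)\cong A^{*,*}(X;\det E\oplus \det E^{\vee}\oplus E) \cong A^{*,*}(X;\det E)$. Your additional remarks on verifying the hypotheses over $E_1-X$ and on the canonicity of the evaluation trivializations only make explicit what the paper leaves implicit.
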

\begin{proof}
The canonical trivializations 
\[
 \det (\det E\oplus \det E^\vee)\cong \triv_X,\quad \det (E\oplus \det E^\vee)\cong \triv_X,
\]
combined with the above lemma and Remark~\ref{r:untwist} yield the claim:
\[
A^{*,*}(X;E)\cong A^{*,*}(X;E\oplus \det E^{\vee} \oplus \det E)\cong A^{*,*}(X;\det E\oplus \det E^{\vee}\oplus E) \cong A^{*,*}(X;\det E).
\]
\end{proof}

\begin{corollary} \label{c:suspuntwist}
Let $E$ be a vector bundle over a smooth variety $X$ and let $\triv_X^n$ be a trivialized vector bundle over $X$. Then the morphism
\[
A^{*,*}(X;E) \xrightarrow{-\cup\Sigma_T^n 1} A^{*,*}(X;E\oplus \triv_X^n)
\]
is an isomorphism.
\end{corollary}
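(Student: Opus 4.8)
The plan is to deduce the statement from Lemma~\ref{l:twisted_cancel}, applied with $E_1=E$, $E_2=\triv_X^n$ and the distinguished class $\thc=\Sigma_T^n 1\in A^{*,*}(X;\triv_X^n)$. Here $\Sigma_T^n 1$ is the image of the unit under the iterated $T$-suspension isomorphism $\Sigma_T^n\colon A^{*,*}(X)\to A^{*+2n,*+n}(\Th(\triv_X^n))=A^{*,*}(X;\triv_X^n)$, where I use the canonical identification $\Th(\triv_X^n)\cong X_+\wedge T^{\wedge n}$. With these choices the conclusion of Lemma~\ref{l:twisted_cancel} is exactly the asserted isomorphism $A^{*,*}(X;E)\xrightarrow{-\cup\thc}A^{*,*}(X;E\oplus\triv_X^n)$, so everything reduces to checking the two hypotheses of that lemma.

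Both hypotheses are instances of a single statement, which I would isolate as the key step: for every smooth variety $Z$ the map
\[
A^{*,*}(Z)\xrightarrow{-\cup\Sigma_T^n 1} A^{*,*}(Z;\triv_Z^n)
\]
coincides with the iterated $T$-suspension isomorphism $\Sigma_T^n$, and is in particular an isomorphism. Granting this, I obtain the first hypothesis by taking $Z=X$. For the second I take $Z=E-X$, using that $p^*\triv_X^n=\triv_{E-X}^n$ canonically and that the suspension class is natural with respect to pullback (together with $p^A 1_X=1_{E-X}$), so that $p^A(\Sigma_T^n 1)$ is again the suspension class $\Sigma_T^n 1$ on $E-X$ and cupping with it is once more the suspension isomorphism. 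By associativity of the cup product and the factorization $\Sigma_T^n 1=\Sigma_T^{n-1}1\cup\Sigma_T 1$ the key step moreover reduces to the case $n=1$.

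The main obstacle is precisely the key step, i.e. identifying the cup product with the suspension class of a trivial bundle with the suspension isomorphism itself. This is where I must unwind the construction of the cup product in Definition~\ref{d:cup}: specializing to $E_1=0$ and $E_2=\triv_Z^n$, the product $A^{*,*}(Z)\times A^{*,*}(Z;\triv_Z^n)\to A^{*,*}(Z;\triv_Z^n)$ is nothing but the $A^{*,*}(Z)$-module structure on $A^{*,*}(\Th(\triv_Z^n))$ furnished by Definition~\ref{d:coht}(4), and I must verify that $\Sigma_T^n(a)=a\cup\Sigma_T^n 1$ under this action. This compatibility follows from the definition of $\Sigma_T$ through the canonical equivalence $T\cong S^{2,1}$ combined with the multiplicativity of the external product in $\SH(k)$, which forces the suspension of the unit to act as the suspension isomorphism. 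I expect this verification, though entirely standard, to be the only genuinely nontrivial point; once it is in place, the five-lemma argument already packaged in Lemma~\ref{l:twisted_cancel} upgrades the untwisted statement to the desired isomorphism for an arbitrary bundle $E$.
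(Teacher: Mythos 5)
Your proposal is correct and follows essentially the same route as the paper, which states this corollary without proof precisely because it is the instance of Lemma~\ref{l:twisted_cancel} with $E_1=E$, $E_2=\triv_X^n$ and $\thc=\Sigma_T^n 1$, the two hypotheses holding because cupping with the class $\Sigma_T^n 1$ agrees with the iterated $T$-suspension isomorphism (naturally in the base, so also over $E-X$). Your ``key step'' identifying $-\cup\Sigma_T^n 1$ with $\Sigma_T^n$ via the module structure of Definition~\ref{d:coht}(4) and multiplicativity in $\SH(k)$ is exactly the standard verification the paper leaves implicit.
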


\begin{lemma}
\label{l:twq}
Let $E$ be a vector bundle over a smooth variety $X$ and let 
\[
0\to E_1\xrightarrow{i} E_2\xrightarrow{p} E_3\to 0
\]
be an exact sequence of vector bundles over $X$. Then there is a canonical isomorphism
\[
A^{*,*}(X;E\oplus E_2)\cong A^{*,*}(X; E\oplus E_3\oplus E_1).
\]
\end{lemma}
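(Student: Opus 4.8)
The plan is to exhibit the isomorphism as an instance of homotopy invariance applied to a canonical deformation of $E_2$ into the split bundle $E_1\oplus E_3$. Writing $t$ for the coordinate on $\A^1$ and $\pi\colon X\times\A^1\to X$ for the projection, I would first build, directly from the exact sequence, a vector bundle
\[
\mathcal E=\left\{\left.(e_2,e_3,t)\in \pi^*(E_2\oplus E_3)\,\right|\, p(e_2)=t\,e_3\right\},
\]
that is, the kernel of the surjection $\pi^*(E_2\oplus E_3)\to \pi^*E_3$, $(e_2,e_3,t)\mapsto p(e_2)-t\,e_3$. This is a subbundle of rank $\rank E_2$, and a fibrewise inspection gives canonical isomorphisms $i_1^*\mathcal E\cong E_2$ (for $t\neq 0$ the component $e_3$ is determined by $e_2$) and $i_0^*\mathcal E\cong E_1\oplus E_3$ (for $t=0$ the condition reads $e_2\in E_1$ with $e_3$ free), where $i_0,i_1\colon X\to X\times\A^1$ are the zero and unit sections. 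Setting $\mathcal F=\pi^*E\oplus\mathcal E$ one obtains canonical isomorphisms $i_1^*\mathcal F\cong E\oplus E_2$ and $i_0^*\mathcal F\cong E\oplus E_3\oplus E_1$.

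The isomorphism of the lemma will then be $i_1^A\circ(i_0^A)^{-1}$, once I show that the two pullback maps (in the sense of Definition~\ref{d:tw})
\[
i_\varepsilon^A\colon A^{*,*}(X\times\A^1;\mathcal F)\to A^{*,*}(X;i_\varepsilon^*\mathcal F),\qquad \varepsilon=0,1,
\]
induced by $\bar i_\varepsilon\colon \Th(i_\varepsilon^*\mathcal F)\to\Th(\mathcal F)$, are isomorphisms. Each $\bar i_\varepsilon$ yields a morphism between the localization sequences (property~(1) of Definition~\ref{d:coht}) attached to the zero sections of $\mathcal F$ and of $i_\varepsilon^*\mathcal F$, relating the cohomology of the Thom space, of the total space, and of the complement of the zero section. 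On the total spaces the map is an isomorphism by homotopy invariance (property~(2)): the projections $|\mathcal F|\to X\times\A^1\to X$ and $|i_\varepsilon^*\mathcal F|\to X$ are towers of vector- and $\A^1$-bundles, and $\bar i_\varepsilon$ is compatible with them since $\pi\circ i_\varepsilon=\id_X$. By the five lemma it then suffices to treat the complements of the zero sections.

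For these deleted bundles I would argue by Mayer--Vietoris (property~(3)) over an affine Zariski cover $\{U_\alpha\}$ of $X$ trivializing $E,E_1,E_3$. Over an affine $U_\alpha$ the sequence $0\to E_1\to E_2\to E_3\to 0$ splits, since $\operatorname{Ext}^1$ vanishes on affines; hence $\mathcal F$ is trivial on each $U_\alpha\times\A^1$, and likewise on all intersections, which are again affine. There the deleted bundle is literally $U_\alpha\times\A^1\times(\A^N-\{0\})$ with $N=\rank E+\rank E_2$, and its restriction to $t=\varepsilon$ is a cohomology isomorphism by homotopy invariance. As all of these isomorphisms are induced by the single section $i_\varepsilon$, they agree on overlaps, so an induction on the cover using the Mayer--Vietoris sequences and the five lemma assembles them into an isomorphism on the deleted bundle over all of $X$.

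This last step is the one I expect to be the main obstacle: the whole reduction rests on the fact that $\mathcal F$, though a genuinely nontrivial bundle over $X\times\A^1$, becomes trivial over each $U_\alpha\times\A^1$ because the extension cocycle comparing $E_2$ with $E_1\oplus E_3$ is supported on overlaps (its transition matrices are unipotent). It is exactly this locality that legitimizes the fibrewise homotopy invariance after passing to the Mayer--Vietoris reduction; without it, homotopy invariance in the $\A^1$-direction would not be available for the deleted bundle. Granting this, the five lemma upgrades the total-space and deleted-bundle isomorphisms to the isomorphisms $i_\varepsilon^A$, and $i_1^A\circ(i_0^A)^{-1}$ is the desired canonical isomorphism; its canonicity is manifest, since $\mathcal E$ was constructed from the exact sequence without any choices.
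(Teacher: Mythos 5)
Your proposal is correct, but it takes a genuinely different route from the paper. The paper's proof first invokes Jouanolou's device to reduce to affine $X$, splits the sequence there, and then shows the resulting isomorphism is independent of the splitting: the automorphism comparing two splittings is unipotent, hence induces the identity on the Thom space by the $\A^1$-homotopy argument of Lemma~\ref{l:elematrix}. You instead build the canonical degeneration $\mathcal E\subset\pi^*(E_2\oplus E_3)$ of the extension into its split form over $X\times\A^1$ and prove that restriction to the slices $t=0,1$ induces isomorphisms on Thom-space cohomology, via the localization sequence, homotopy invariance, and a Mayer--Vietoris induction over a trivializing affine cover (where the extension splits, so $\mathcal F$ is indeed trivial on each $U_\alpha\times\A^1$, as you verify). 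You are right to treat this last point as the crux: for non-affine $X$ a vector bundle on $X\times\A^1$ need not be extended from $X$, so homotopy invariance for $\Th(\mathcal F)$ is not automatic, and your patching argument is exactly what supplies it. What your approach buys is that canonicity is built in --- no choices are ever made, so there is nothing to check; what it costs is this extra homotopy-invariance statement, which the paper sidesteps entirely by working over an affine base from the start. Conversely, the paper's proof is shorter given Lemma~\ref{l:elematrix} and gives an explicit formula for the isomorphism via a splitting, which is convenient for the later compatibility checks in the push-forward construction; in fact the two constructions agree, since over affine $X$ a choice of splitting $s_1$ trivializes your deformation bundle by $(e_1,e_3',t)\mapsto(i(e_1)+t\,s_1(e_3'),e_3',t)$, identifying your composite with $\phi_1^A$. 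Two cosmetic points: your construction naturally lands in $A^{*,*}(X;E\oplus E_1\oplus E_3)$ rather than $A^{*,*}(X;E\oplus E_3\oplus E_1)$, so you should compose with the (canonical) isomorphism induced by permuting the summands of the bundle, noting that this permutation need not act trivially on twisted cohomology but is still a canonical isomorphism; and your Mayer--Vietoris induction implicitly uses that $X$ admits a finite affine cover with affine pairwise intersections, which holds since smooth varieties are quasi-compact and separated.
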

\begin{proof}
Using the Jouanolou's device (see \cite{J73}, \cite[\S 4]{We89}) one may assume that $X$ is affine. Then the short exact sequence splits producing a non-canonical isomorphism $\phi\colon E_3\oplus E_1\xrightarrow{\simeq} E_2$. It is sufficient to show that isomorphism 
\[
\phi^A\colon A^{*,*}(X;E\oplus E_2)\xrightarrow{\simeq} A^{*,*}(X; E\oplus E_3\oplus E_1)
\]
does not depend on the choice of the splitting. 

Consider two splittings 
\[
\xymatrix{
E_1 \ar@/^0.5pc/[r]^{i} & E_2 \ar@/^0.5pc/[l]^{j_1} \ar@/_0.5pc/[r]_{p} & E_3 \ar@/_0.5pc/[l]_{s_1}
}
\quad\quad
\xymatrix{
E_1 \ar@/^0.5pc/[r]^{i} & E_2 \ar@/^0.5pc/[l]^{j_2} \ar@/_0.5pc/[r]_{p} & E_3 \ar@/_0.5pc/[l]_{s_2}
}.
\]
They induce isomorphisms 
\[
\phi_1=(s_1,i), \phi_2=(s_2,i)\colon E_3\oplus E_1\xrightarrow{\simeq} E_2.
\]
The inverse isomorphisms are given by $\begin{pmatrix} p \\ j_1 \end{pmatrix}$ and $\begin{pmatrix} p \\ j_2 \end{pmatrix}$ respectively. We need to check that 
\[
\phi_2^{-1}\phi_1=\begin{pmatrix} p\circ s_1 & p\circ i \\ j_2\circ s_1 & j_2\circ i \end{pmatrix}= \begin{pmatrix} \id_{E_3} & 0 \\ j_2\circ s_1 & \id_{E_1} \end{pmatrix}
\]
induces the identity automorphism on the twisted cohomology groups. We claim that this map induces the trivial automorphism of $\Th(E_3\oplus E_1)$ in $\Hp(k)$. This can be shown in the similar way as in the proof of Lemma~\ref{l:elematrix}: inserting $t$ in the lower-left entry of the matrix one obtains an explicit $\A^1$-homotopy.
\end{proof}

\begin{corollary}\label{c:K0_coh}
Let $E_1$ and $E_2$ be vector bundles over a smooth variety $X$ such that $[E_1]=[E_2]$ in $\widetilde{K}_0(X)=K_0(X)/(\mathbb{Z}\cdot [\triv_X])$. Then there is a (non-canonical) isomorphism
\[
A^{*,*}(X;E_1)\cong A^{*,*}(X;E_2).
\]
\end{corollary}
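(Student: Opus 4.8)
The plan is to deduce the isomorphism from the single identity in $\widetilde{K}_0(X)$ by turning it into an honest isomorphism of vector bundles after stabilising by trivial summands, and then to cancel the trivial summands using Corollary~\ref{c:suspuntwist}. Concretely, the hypothesis $[E_1]=[E_2]$ in $\widetilde{K}_0(X)$ means $[E_1]-[E_2]=m\cdot[\struct_X]$ in $K_0(X)$ for some $m\in\Z$; swapping $E_1$ and $E_2$ if necessary we may assume $m\ge 0$, so that $[E_1]=[E_2\oplus\triv^m_X]$ in $K_0(X)$. I would then invoke the standard fact that over a scheme on which every vector bundle is a direct summand of a trivial one, equality of classes in $K_0$ is detected by stable isomorphism with trivial bundles: there exist $a,b\ge 0$ with
\[
E_1\oplus\triv^a_X\;\cong\;E_2\oplus\triv^b_X.
\]
Given such an isomorphism $\theta$, the claim follows by concatenating
\[
A^{*,*}(X;E_1)\;\cong\;A^{*,*}(X;E_1\oplus\triv^a_X)\;\xrightarrow{\theta^A}\;A^{*,*}(X;E_2\oplus\triv^b_X)\;\cong\;A^{*,*}(X;E_2),
\]
where the outer isomorphisms are instances of Corollary~\ref{c:suspuntwist} and the middle one is induced by $\theta$ as in Definition~\ref{d:tw}.

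The one point that needs care is the geometric hypothesis under which the $K_0$ fact holds. For a general smooth variety $X$ a vector bundle need not be a summand of a trivial bundle, so I would first pass to an affine model by the Jouanolou device \cite{J73,We89}: let $\pi\colon\widetilde X\to X$ be an affine torsor under a vector bundle with $\widetilde X$ affine, so that $\pi$ is an $\A^N$-bundle. On the affine scheme $\widetilde X$ the stable-isomorphism statement is classical (choose a complement $G$ with $\pi^*F\oplus G\cong\triv^N_{\widetilde X}$ for the bundle $F$ witnessing the $K_0$ relation, and add $G$), and $[\pi^*E_1]=[\pi^*E_2]$ holds in $\widetilde{K}_0(\widetilde X)$ since $\pi^*$ respects classes of trivial bundles. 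To transport the conclusion back to $X$ I would use \emph{twisted homotopy invariance}: the map $\Th(\pi^*E)\to\Th(E)$ induces an isomorphism $A^{*,*}(X;E)\xrightarrow{\simeq}A^{*,*}(\widetilde X;\pi^*E)$, which follows by applying the five lemma to the localization sequences of the zero sections $X\subset E$ and $\widetilde X\subset\pi^*E$, whose total-space and complement terms are identified by ordinary homotopy invariance (Definition~\ref{d:coht}(2)). Running the chain above on $\widetilde X$ and pre/post-composing with these isomorphisms yields the desired (non-canonical) isomorphism on $X$.

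The main obstacle is precisely the algebraic translation in the first step: extracting a bundle isomorphism $E_1\oplus\triv^a_X\cong E_2\oplus\triv^b_X$ from an equality of classes in $\widetilde{K}_0$. Once one is on an affine scheme this is a routine consequence of the existence of complements to vector bundles, but it is what forces the Jouanolou reduction, and it is the reason the resulting isomorphism is only non-canonical --- it depends on the choice of splitting, of complement $G$, and of the affine torsor. Everything else is formal, being an assembly of Corollary~\ref{c:suspuntwist}, the functoriality of $\theta\mapsto\theta^A$, and homotopy invariance.
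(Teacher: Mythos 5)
Your proof is correct, but it takes a different route than the paper, whose entire proof is one line: ``Follows from the above lemma and Corollary~\ref{c:suspuntwist}.'' The paper's intended argument is that twisted cohomology is invariant under the moves generating equality in $\widetilde{K}_0(X)$: replacing an extension by the direct sum of its outer terms (Lemma~\ref{l:twq}) and adding trivial summands (Corollary~\ref{c:suspuntwist}). You bypass Lemma~\ref{l:twq} entirely: you reduce to an affine model $\pi\colon\widetilde{X}\to X$ by Jouanolou, use that on an affine scheme equality in $K_0$ is literally stable isomorphism (all short exact sequences split and complements exist), apply Corollary~\ref{c:suspuntwist} plus functoriality of $\theta\mapsto\theta^A$, and transport back along a twisted homotopy invariance $A^{*,*}(X;E)\cong A^{*,*}(\widetilde{X};\pi^*E)$, which you correctly establish by the five lemma on the localization sequences of the zero sections. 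Your version is in one respect more careful than the paper's: if one runs the paper's argument directly on a non-affine $X$, the standard characterization of $K_0$-equality produces an isomorphism only after adding auxiliary extensions, and after applying Lemma~\ref{l:twq} one is left with an extra bundle $C'\oplus C''$ stuck in both twists; cancelling it requires a trivial complement, which need not exist on a non-affine variety (e.g.\ $\struct(1)$ on $\Proj^1$ is not a summand of a trivial bundle). So some affine reduction of the kind you make explicit is implicitly needed in the paper's proof too --- it is hidden in the proof of Lemma~\ref{l:twq}, which also silently uses your twisted homotopy invariance when it ``assumes $X$ is affine.'' What the paper's route buys is brevity and reuse of Lemma~\ref{l:twq}, which is needed later anyway and carries canonicity information; what your route buys is self-containedness and a precise accounting of where the non-canonicity enters (choice of torsor, of stable isomorphism, and of complement).
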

\begin{proof}
Follows from the above lemma and Corollary~\ref{c:suspuntwist}.
\end{proof}

\section{Twisted cohomology of projective spaces}

This section is the main part of the paper, here we compute groups $A^{*,*}_\eta(\mathbb{P}^n_X; p_1^*V\oplus p_2^*E)$, where $E$ is a vector bundle over $\mathbb{P}^n$, $V$ is a vector bundle over $X$, $p_1\colon \mathbb{P}^n_X\to X$ and $p_2\colon \mathbb{P}^n_X\to \mathbb{P}^n$ are canonical projections. In the next two sections this computation will allow us to define projective push-forwards for $A^{*,*}_\eta(-)$.

We start with the following easy but useful lemma.
\begin{lemma}
\label{lemma_square_symbol}
Let $X$ be a smooth variety, $s\in k[X]^*$ be an invertible regular function and $f_{s^2}\colon X_+\wedge T\to X_+\wedge T$ be the morphism given by $f_{s^2}(x,u)=(x,s(x)^2u)$. Then $\Sigma_T^\infty f_{s^2}=\id$ in $\SH(k)$.
\end{lemma}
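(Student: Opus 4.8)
The plan is to realise $f_{s^2}$ as the map induced by an \emph{elementary} automorphism acting projectively, so that the projective-bundle part of Lemma~\ref{l:elematrix} can be brought to bear. The appearance of the square $s^2$ (rather than a single $s$) is precisely what makes this work.

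First I would transport the problem from $T$ to $\Proj^1$. Write $\Proj^1_X=\Proj(\triv^2_X)$ with homogeneous coordinates $[z:w]$, sections $\sigma_0(x)=[0:1]$ and $\sigma_\infty(x)=[1:0]$, and affine coordinate $\zeta=z/w$ trivialising $\Proj^1_X\setminus\sigma_\infty(X)\cong X\times\A^1$ so that $\sigma_0$ is the zero locus of $\zeta$. Using the canonical isomorphism $(\Proj^1,\infty)\cong T$ of \cite[Lemma 2.15, Corollary 2.18]{MV99}, the space $X_+\wedge T$ is identified with $\Proj^1_X/\sigma_\infty(X)$, and under this identification $f_{s^2}$ becomes the map induced by the automorphism $\bar\alpha$ of $\Proj^1_X$ over $X$ given by $\bar\alpha([z:w])=[s^2z:w]$; in the coordinate $\zeta$ this is $\zeta\mapsto s(x)^2\zeta$, and $\bar\alpha$ fixes both $\sigma_0$ and $\sigma_\infty$.

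The key point is that $\bar\alpha$ is the projective automorphism attached to the \emph{linear} automorphism $\alpha=\operatorname{diag}(s,s^{-1})\in\operatorname{Aut}(\triv^2_X)$, because $[sz:s^{-1}w]=[s^2z:w]$. By the examples recorded in the definition of elementary automorphism, $\operatorname{diag}(s^{-1},s)$, and hence also its inverse $\alpha=\operatorname{diag}(s,s^{-1})$, is elementary. Therefore the projective-bundle part of Lemma~\ref{l:elematrix} applies and shows that $\bar\alpha$ induces the trivial automorphism of $(\Proj^1_{X+},+)$; that is, $\Sigma^\infty_T\bar\alpha_+=\id$ in $\SH(k)$. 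Note that it is exactly the squaring that lets us write the multiplier as $s\cdot s^{-1}$ and thereby land in the elementary subgroup, which is the geometric shadow of $\langle s^2\rangle=1$.

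It remains to descend this triviality to the summand $X_+\wedge T$, and this is the only genuinely delicate point. Lemma~\ref{l:elematrix} is stated for the \emph{unreduced} space $(\Proj^1_{X+},+)$, and the $\A^1$-homotopy produced there need not fix the section $\sigma_\infty$, so it does not obviously descend to the reduced space $T=(\Proj^1,\infty)$. To bridge this gap I would use the stable splitting coming from the projection $p\colon\Proj^1_X\to X$ and the section $\sigma_\infty$, namely $\Sigma^\infty_T\Proj^1_{X+}\cong\Sigma^\infty_T X_+\oplus\Sigma^\infty_T(\Proj^1_X/\sigma_\infty(X))$, whose second summand is $\Sigma^\infty_T(X_+\wedge T)$. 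Since $\bar\alpha$ commutes with $p$ and preserves $\sigma_\infty$, the map $\Sigma^\infty_T\bar\alpha_+$ respects this decomposition: it is the identity on the first summand and, because $\bar\alpha$ acts by $\zeta\mapsto s^2\zeta$ in the coordinate trivialising the normal bundle of $\sigma_0$ (using $\Proj^1_X/\sigma_\infty(X)\cong\Proj^1_X/(\Proj^1_X-\sigma_0(X))\cong\Th(\triv^1_X)$ via homotopy purity), it equals $\Sigma^\infty_T f_{s^2}$ on the second summand. Comparing with $\Sigma^\infty_T\bar\alpha_+=\id$ then forces $\Sigma^\infty_T f_{s^2}=\id$, as claimed.
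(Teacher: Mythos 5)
Your proof is correct and is essentially the paper's own argument: both proofs hinge on realizing the multiplier $s^2$ as the projective image of the elementary automorphism $\operatorname{diag}(s,s^{-1})$ of $\triv^2_X$ (the paper uses its inverse $\operatorname{diag}(s^{-1},s)$), invoking the projective-bundle part of Lemma~\ref{l:elematrix} to trivialize it, and then passing between $(X\times\Proj^1)_+$ and the reduced smash $X_+\wedge(\Proj^1,\infty)\cong X_+\wedge T$ via the stable splitting $\Sigma^\infty_T(X\times\Proj^1)_+\simeq \Sigma^\infty_T X_+\vee \Sigma^\infty_T\bigl(X_+\wedge(\Proj^1,\infty)\bigr)$. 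The only difference is organizational: the paper shows the two composites $(X\times\Proj^1)_+\to X_+\wedge(\Proj^1,\infty)$ agree already in $\Hp(k)$ and concludes by injectivity of precomposition with the quotient map, whereas you show the automorphism of $(X\times\Proj^1)_+$ is stably the identity and then descend along a stable section of the quotient.
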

\begin{proof}
The canonical isomorphism $T\cong (\Proj^1,\infty)$ combined with the splitting 
\begin{multline*}
\Hom_{\SH(k)}((X\times\Proj^1)_+, X_+\wedge (\Proj^1,\infty))=\\
= \Hom_{\SH(k)}(X_+\wedge (\Proj^1,\infty), X_+\wedge (\Proj^1,\infty))\oplus \Hom_{\SH(k)}(X_+, X_+\wedge (\Proj^1,\infty))
\end{multline*}
yields that it is sufficient to show that
\[
\widetilde{f}_{s^2}=g\in \Hom_{\Hp(k)}((X\times\Proj^1)_+, X_+\wedge (\Proj^1,\infty)),
\] 
where $\widetilde{f}_{s^2}(x,[u:v])=(x,[s(x)^2u:v])$ and $g(x,[u:v])=(x,[u:v])$. We have 
\[
g(x,[u:v])=(x,[u:v])=(x,[s(x)u:s(x)v]),
\]
hence $g=\widetilde{f}_{s^2}\circ h$ with 
\[
h\colon (X\times\Proj^1)_+\to (X\times\Proj^1)_+,\quad h(x,[u:v])=(x,[s(x)^{-1}u,s(x)v]).
\]
By Lemma~\ref{l:elematrix} we have $h=\id$ in $\Hp(k)$ and the claim follows.
\end{proof}

\begin{lemma}
\label{HopfSuspended}
Let $A$ be a commutative ring $T$-spectrum and $X$ be a smooth variety. Denote 
\[
0_\Proj=[0:0:\ldots:0:1]\in\mathbb{P}^{2n-1},\quad 1_\A=(1,1,\ldots, 1)\in (\A^{2n}-0).
\]
Then the projection $H_{2n}\colon (\A^{2n}-0)\to \Proj^{2n-1}$ given by 
\[
H_{2n}(x_1,x_2,\ldots,x_{2n})=[x_1:x_2:\ldots x_{2n}]
\]
induces an isomorphism 
\[
\overline{H}_{X,2n}^A\colon A^{*,*}_\eta(X_+\wedge(\Proj^{2n-1}/(\Proj^{2n-1}-0_\Proj)))\xrightarrow{\simeq}A^{*,*}_\eta(X_+\wedge(\A^{2n}-0,1_\A)).
\]
\end{lemma}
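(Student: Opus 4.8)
The plan is to follow the strategy announced in the introduction: to show that, after canonical identifications, $H_{2n}$ is an iterated $T$-suspension of the basic Hopf map $H$ of Definition~\ref{d:hopf}, and then to exploit that inverting $\eta$ turns the map induced by $H$ into an isomorphism. First I would fix convenient models for the source and the target. By \cite[Example 2.20]{MV99} one has $(\A^{2n}-0,1_\A)\cong S^{4n-1,2n}\cong(\A^2-0,(1,1))\wedge T^{\wedge(2n-2)}$, while $0_\Proj$ is a smooth closed $k$-point of $\Proj^{2n-1}$ with trivial normal bundle, so the homotopy purity isomorphism \cite[Section 3, Theorem 2.23]{MV99} gives $\Proj^{2n-1}/(\Proj^{2n-1}-0_\Proj)\cong T^{\wedge(2n-1)}\cong(\Proj^1,[1:1])\wedge T^{\wedge(2n-2)}$, the rightmost factorizations matching the source and target of $H=H_2$ in Definition~\ref{d:hopf}. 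Note that $1_\A$ is sent by $H_{2n}$ into $\Proj^{2n-1}-0_\Proj$, so $\overline{H}_{X,2n}$ really is a pointed map.

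The heart of the argument is to check that under these identifications $H_{2n}$ becomes $H\wedge\id_{T^{\wedge(2n-2)}}=\Sigma_T^{2n-2}H$, up to a unit in $\SSp^{0,0}(\pt)$. I would assemble the two identifications from the standard affine covers $\{x_i\neq 0\}$ of $\A^{2n}-0$ and of $\Proj^{2n-1}$, and then trace $H_{2n}$ cell by cell, comparing it to the suspension of $H$ acting on the first two coordinates. The gluing data relating adjacent cells are rescalings of the tautological coordinate by transition functions of $\struct(\pm 1)$; in passing from $\A^{2n}-0$ to $\Proj^{2n-1}$ these enter squared, and Lemma~\ref{lemma_square_symbol} shows that such square rescalings of the $T$-coordinate are stably trivial, while Lemma~\ref{l:elematrix} disposes of the remaining elementary automorphisms. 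This is the algebraic shadow of the fact, recalled after the statement, that on real points $H_{2n}$ has degree two whereas $H$ is the double cover of $S^1$.

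With $\Sigma_T^\infty H_{2n}$ identified with $\Sigma_T^{2n-2}\Sigma_T^\infty H$ up to a unit, I would pass to cohomology. Since $\Sigma_T^\infty H=\Sigma^{3,2}\eta$ by Definition~\ref{d:hopf}, reading the map induced by $\Sigma_T^{2n-2}H$ through the suspension isomorphisms and the $\SSp^{*,*}(\pt)$-module structure of Definition~\ref{d:coht}(5) identifies it, up to a unit coming from the shuffle isomorphisms, with multiplication by $\eta$. Smashing everything with $X_+$ and tensoring with $\SSp^{*,*}(\pt)[\eta^{-1}]$, multiplication by $\eta$ becomes invertible, and since the auxiliary unit is invertible as well, $\overline{H}^A_{X,2n}$ is an isomorphism. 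Alternatively one may argue via the mapping cone: $\Cone(H_{2n})\cong\Sigma_T^{2n-2}\Cone(H)\cong\Sigma_T^{2n-2}(\Proj^2,[0:0:1])$ by Remark~\ref{r:hopfcone}, whose reduced $\eta$-inverted cohomology vanishes, and the long exact sequence of the cofibre sequence then yields the claim.

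The main obstacle is the middle step. The identifications of $\A^{2n}-0$ and of $\Proj^{2n-1}/(\Proj^{2n-1}-0_\Proj)$ with smash products of spheres are not given by closed formulas but are built from open covers and $\A^1$-homotopies, so verifying that the concrete projection $H_{2n}$ matches the abstract suspension $\Sigma_T^{2n-2}H$ — rather than merely inducing some map between the correct spheres — requires careful bookkeeping of cells and base points, and it is precisely there that the square-triviality of Lemma~\ref{lemma_square_symbol} is indispensable.
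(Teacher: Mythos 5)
Your overall strategy---exhibit $H_{2n}$ stably as a $(2n-2)$-fold $T$-suspension of the Hopf map and then use that inverting $\eta$ makes the induced map invertible---is exactly the paper's strategy, and you correctly identify the two technical tools (Lemma~\ref{l:elematrix} and Lemma~\ref{lemma_square_symbol}) and where the even powers of coordinates come from. But there is a genuine gap, and you flag it yourself: the identification of $H_{2n}$ with $\Sigma_T^{2n-2}H$ under the canonical isomorphisms is never carried out; it is deferred to ``careful bookkeeping of cells and base points'' over the standard affine covers. That identification is the entire content of the lemma, and the chart-by-chart plan is problematic as stated: the identifications $(\A^{2n}-0,1_\A)\cong S^{4n-1,2n}$ and $\Proj^{2n-1}/(\Proj^{2n-1}-0_\Proj)\cong T^{\wedge(2n-1)}$ are zig-zags of weak equivalences rather than concrete morphisms, so ``tracing $H_{2n}$ cell by cell'' through them is not a well-defined procedure, and nothing in your outline explains how the local rescalings on overlaps assemble into a single global automorphism to which Lemma~\ref{l:elematrix} or Lemma~\ref{lemma_square_symbol} could be applied. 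Your fallback through mapping cones does not repair this, since $\Cone(H_{2n})\cong\Sigma_T^{2n-2}\Cone(H)$ already presupposes the missing identification; Remark~\ref{r:hopfcone} is a statement about $H=H_2$ only.

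For comparison, the paper's mechanism is global and avoids covers altogether. It introduces the $\A^1$-contractible subvariety $Y=\bigl(\A^{2n-2}\times(1,1)\bigr)\cup\bigl((\A^{2n-2}-0)\times\A^2\bigr)$ of $\A^{2n}-0$, so that the quotient map $(\A^{2n}-0,1_\A)\to(\A^{2n}-0)/Y$ is an isomorphism in $\Hp(k)$, while the tautological embedding identifies $(\A^{2n}-0)/Y$ with $T^{\wedge 2n-2}\wedge(\A^2-0,(1,1))$ by excision; on the projective side the affine chart around $0_\Proj$ gives $j\colon T^{\wedge 2n-1}\xrightarrow{\simeq}\Proj^{2n-1}/(\Proj^{2n-1}-0_\Proj)$. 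The maps $H_{2n}$ and $\id\wedge H_2$ are then compared inside one explicit diagram whose vertices are quotients of $\A^{2n}-0$; the only square that fails to commute on the nose involves $\psi,\phi\colon T\wedge\Gm{}_+\to T$, $\psi(x,t)=x/t$ and $\phi(x,t)=x/t^{2n-1}$, which differ by multiplication by the even power $t^{2(n-1)}$ and hence agree after $\Sigma_T^\infty$ by Lemma~\ref{lemma_square_symbol}, while the diagonal matrix $\operatorname{diag}(t,\ldots,t,1/t^{2n-2})$, elementary and therefore invisible in $\Hp(k)$ by Lemma~\ref{l:elematrix}, reconciles $\id\wedge\phi$ with the remaining comparison map. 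A diagram chase, together with the fact that $(\id\wedge H_2)^A$ is an isomorphism once $\eta$ is inverted, finishes the proof. Your proposal needs this construction (or an equivalent one) spelled out before it counts as a proof.
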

\begin{proof}
The proof does not depend on the base, so we omit $X$ from notation. One may smash everything with $X_+$ and use the same the reasoning.

Put $Y=\left(\left(\A^{2n-2}\times (1,1)\right)\cup \left( (\A^{2n-2}-0)\times \A^2 \right)\right)\subset (\A^{2n}-0)$ and note that $Y$ can be contracted to a point in two steps: 
\[
\left(\A^{2n-2}\times (1,1)\right)\cup \left( (\A^{2n-2}-0)\times \A^2 \right) \sim \left(\A^{2n-2}\times (1,1)\right)\sim 1_\A.
\]
Consider the following diagram.
\[
\xymatrix @R=2pc @C=1pc{
 T^{\wedge 2n-2} \wedge (\Proj^1,[1:1]) \ar[d]_\simeq & T^{\wedge 2n-2}\wedge (\A^2-0,(1,1))  \ar[d]_i \ar[l]_{\id\wedge H_2} \ar[r]^(0.55)\simeq &  (\A^{2n}-0)/Y   \ar@/^4pc/[ddd]_{H_{2n}} \\
T^{\wedge 2n-2}\wedge (\Proj^1/\A^1) & T^{\wedge 2n-2}\wedge ((\A^2-0)/(\A^1\times \Gm))  \ar[l]_(0.6)*!/^-6pt/{\labelstyle \id\wedge H_2} \ar[ddr]^{H_{2n}}  & (\A^{2n}-0,1_\A) \ar[u]^\simeq \ar[dd]_{H_{2n}}  \\
T^{\wedge 2n-2}\wedge T \ar[u]^\simeq \ar@{=}[d] & T^{\wedge 2n-2}\wedge T\wedge \Gm_+   \ar[u]^{\simeq} \ar[dl]_(0.6){\id\wedge\phi} \ar[l]_(0.6){\id\wedge\psi} \ar[d]_{\Phi}&     \\
T^{\wedge 2n-2}\wedge T \ar@{=}[r]  &  T^{\wedge 2n-1}  \ar[r]^(0.4){j}_(0.4)\simeq &  \Proj^{2n-1}/(\Proj^{2n-1}-0_\Proj)
}
\]
Here all the arrows marked with "$\simeq$" are induced by the tautological embeddings and these morphisms are isomorphisms by excision or $\A^1$-contractibilty. Morphism $i$ is also induced by the tautological inclusion. All the maps denoted by $H_{2n}$ are induced by $H_{2n}$ defined in the statement of the lemma. Morphisms $\psi,\phi\colon T\wedge(\Gm_+,+)\to T$ are given by $\psi(x,t)=x/t$ and $\phi(x,t)=x/t^{2n-1}$ respectively. Finally, $j$ is given by $j(x_1,x_2,\dots,x_{2n-1})=[x_1:x_2:\dots:x_{2n-1}:1]$ and $\Phi$ is given by $\Phi(x_1,x_2,\ldots,x_{2n-1},t)=(x_1/t,x_2/t,\ldots,x_{2n-1}/t)$.

A straightforward check shows that the diagram commutes except possibly for the lower-left square involving $\psi,\phi$ and $\Phi$. Morphisms $\id\wedge \phi$ and $\Phi$ coincide in $\Hp(k)$ by Lemma~\ref{l:elematrix} since they differ by the automorphism of $T^{\wedge 2n-1}\wedge \Gm_+=\Th(\triv^{2n-1}_{\Gm})$ given by diagonal matrix $\operatorname{diag}(t,t,\ldots,t,1/t^{2n-2})$. Lemma~\ref{lemma_square_symbol} yields that for 
\[
f_{t^{2(n-1)}}\colon T\wedge\Gm_+\to T\wedge \Gm_+,\quad f_{t^{2(n-1)}}(x,t)=(t^{2(n-1)}x,t),
\]
we have
\[
\Sigma_T^\infty \psi = \Sigma_T^\infty (\phi f_{t^{2(n-1)}}) = (\Sigma_T^\infty \phi) \circ (\Sigma_T^\infty f_{t^{2(n-1)}}) = \Sigma_T^\infty \phi.
\]
Hence the $\Sigma_T^\infty$-suspension of the diagram commutates. The claim of the lemma follows from the diagram chase combined with the observation that 
\[
(\Sigma_T^{2n-1}\eta)^A=(\id\wedge H_2)^A\colon
A^{*,*}_\eta (T^{\wedge 2n-2} \wedge (\Proj^1,[1:1])) \to A^{*,*}_\eta (T^{\wedge 2n-2}\wedge (\A^2-0,(1,1)))
\]
is an isomorphism since we have inverted $\eta$ in the coefficients.
\end{proof}	

\begin{theorem}\label{Th:proj_coh}
Let $A$ be a commutative ring $T$-spectrum and let $X$ be a smooth variety. Then
\begin{enumerate}
\item 
The projection $p\colon \Proj^{2n}_X\to X$ induces an isomorphism $A^{*,*}_\eta(\Proj^{2n}_X) \cong A^{*,*}_\eta(X)$;
\item
The projection $H_{2n}$ induces an isomorphism
\[
H_{X,2n}^A\colon A^{*,*}_\eta(\Proj^{2n-1}_X)\xrightarrow{\simeq} A^{*,*}_\eta(\A^{2n}_X-(0\times X)),
\]
and, together with a choice of a section $s\colon X\to (\A^{2n}_X-(0\times X))$, induces an isomorphism
$A^{*,*}_\eta(\Proj^{2n-1}_X)\cong A^{*,*}_\eta(X)\oplus A^{*-4n+1,*-2n}_\eta(X).$
\end{enumerate}
\end{theorem}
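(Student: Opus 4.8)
\emph{Plan.} The plan is to prove the two assertions simultaneously by induction on $n$, in the interleaved order $(1)_0,(2)_1,(1)_1,(2)_2,\dots$, where $(1)_n$ and $(2)_n$ denote the two statements for a fixed $n$. Here $(2)_n$ will use $(1)_{n-1}$ and $(1)_n$ will use $(2)_n$, so there is no circularity, and the base case $(1)_0$ is trivial since $\Proj^0_X=X$. Write $0_\Proj=[0:\dots:0:1]$, $1_\Proj=[1:\dots:1]$ and $1_\A=(1,\dots,1)$. I will use repeatedly that the linear projection away from $0_\Proj$ exhibits $\Proj^m-0_\Proj$ as an $\A^1$-bundle over $\Proj^{m-1}$, so homotopy invariance gives $A^{*,*}_\eta(\Proj^m-0_\Proj)\cong A^{*,*}_\eta(\Proj^{m-1})$; that homotopy purity gives $\Proj^m/(\Proj^m-0_\Proj)\cong T^{\wedge m}$; and that $(\A^m-0,1_\A)\cong S^{2m-1,m}$. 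Finally, each rational point yields a section, so the unreduced groups of all spaces below split off a copy of $A^{*,*}_\eta(X)$; since $H_{2n}$ commutes with the projections to $X$ and sends $1_\A$ to $1_\Proj$, the map $H^A_{X,2n}$ respects these splittings and is the identity on the $A^{*,*}_\eta(X)$-summand, hence is an isomorphism iff its reduced part is.

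For $(2)_n$, assuming $(1)_{n-1}$, I would factor the Hopf map. As $H_{2n}(1_\A)=1_\Proj\ne 0_\Proj$, the point $1_\Proj$ lies in $\Proj^{2n-1}-0_\Proj$, so $H_{2n}$ factors as a composite of pointed maps
\[
(\A^{2n}-0,1_\A)\xrightarrow{\ \widetilde H\ }(\Proj^{2n-1},1_\Proj)\xrightarrow{\ c\ }\Proj^{2n-1}/(\Proj^{2n-1}-0_\Proj),
\]
with $c$ the collapse map, and $c\circ\widetilde H$ is exactly the map inducing the isomorphism $\overline H^A_{X,2n}$ of Lemma~\ref{HopfSuspended}. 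The cofiber sequence
\[
X_+\wedge(\Proj^{2n-1}-0_\Proj,1_\Proj)\to X_+\wedge(\Proj^{2n-1},1_\Proj)\xrightarrow{\ c\ }X_+\wedge\Proj^{2n-1}/(\Proj^{2n-1}-0_\Proj)
\]
shows $c^A$ is an isomorphism as soon as $A^{*,*}_\eta\bigl(X_+\wedge(\Proj^{2n-1}-0_\Proj,1_\Proj)\bigr)=0$; but $\Proj^{2n-1}-0_\Proj\simeq\Proj^{2n-2}$, and this reduced group vanishes precisely because $(1)_{n-1}$ makes $p^A\colon A^{*,*}_\eta(X)\to A^{*,*}_\eta(\Proj^{2n-2}_X)$ an isomorphism. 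Hence $\widetilde H^A=\overline H^A_{X,2n}\circ(c^A)^{-1}$ is an isomorphism, and so is $H^A_{X,2n}$ by the first paragraph. The remaining identification then follows by splitting $A^{*,*}_\eta(\A^{2n}_X-(0\times X))$ along the section $s$ and using $(\A^{2n}-0,1_\A)\cong S^{4n-1,2n}$, which contributes $A^{*-4n+1,*-2n}_\eta(X)$.

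For $(1)_n$, assuming $(2)_n$, I would use the cofiber sequence
\[
\A^{2n}-0\xrightarrow{\ H_{2n}\ }\Proj^{2n-1}\xrightarrow{\ g\ }\Proj^{2n},
\]
which arises from the standard cofiber sequence $L^\times\to Y\to\Th(L)$ for the tautological line bundle $L=\struct(-1)$ on $Y=\Proj^{2n-1}$ together with the identification $\Th(\struct(-1)_{\Proj^{2n-1}})\simeq\Proj^{2n}$ (the higher analogue of Morel's $\Cone(H_2)\simeq\Proj^2$ recalled in Remark~\ref{r:hopfcone}), under which the bundle projection $L^\times\to Y$ becomes $H_{2n}$. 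Applying $A^{*,*}_\eta(X_+\wedge-)$ gives a long exact sequence whose arrow induced by $H_{2n}$ is, on reduced groups, the reduced part of the isomorphism $H^A_{X,2n}$ of $(2)_n$. Exactness then forces both neighbouring maps $g^A$ and the boundary to vanish and squeezes the reduced cohomology of $\Proj^{2n}_X$ to zero; with the section splitting this says exactly that $p^A\colon A^{*,*}_\eta(X)\to A^{*,*}_\eta(\Proj^{2n}_X)$ is an isomorphism.

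The one genuinely geometric point, and the step I expect to be the main obstacle, is the cofiber sequence used in $(1)_n$: one must identify $\Th(\struct(-1)_{\Proj^{2n-1}})$ (equivalently, the attaching map of the top cell of $\Proj^{2n}$ over $\Proj^{2n-1}$) with $\Proj^{2n}$ and $H_{2n}$. I would establish this by rotating the localization cofiber sequence of the point $0_\Proj\in\Proj^{2n}$ and identifying the resulting connecting map $\A^{2n}-0\to\Proj^{2n-1}$ with $H_{2n}$ by the same explicit chart computation (via the automorphism $f_{s^2}$ and Lemma~\ref{l:elematrix}) that underlies the proof of Lemma~\ref{HopfSuspended}. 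Everything else is formal, relying only on the localization and homotopy-invariance axioms of Definition~\ref{d:coht}, Lemma~\ref{HopfSuspended}, and the two splittings coming from the rational points $1_\Proj$ and $1_\A$.
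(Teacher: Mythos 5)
Your proposal is correct and takes essentially the same route as the paper: the same interleaved induction $(1)_0\Rightarrow(2)_1\Rightarrow(1)_1\Rightarrow\dots$, the same reduction to reduced groups via rational points, and the same use of Lemma~\ref{HopfSuspended} once the quotient $\Proj^{\mathrm{odd}}/(\Proj^{\mathrm{odd}}-0_\Proj)$ is identified with the reduced cohomology of $\Proj^{\mathrm{odd}}$ using the vanishing for $\Proj^{\mathrm{even}}$. Your "genuinely geometric point" --- the cofiber sequence $\A^{2n}-0\xrightarrow{H_{2n}}\Proj^{2n-1}\to\Proj^{2n}$ --- is exactly what the paper establishes in its step $2n-1\to 2n$, namely the localization sequence for the zero section of $\struct_{\Proj^{2n-1}}(1)$ combined with the excision/contractibility identification $\Th(\struct_{\Proj^{2n-1}}(1))\cong\Proj^{2n}/\A^{2n}\simeq(\Proj^{2n},1_\Proj)$ (the paper works with $\struct(1)$ rather than your $\struct(-1)$, but since $L^\times\cong(L^\vee)^\times$ over the same base with the same projection $H_{2n}$, this is immaterial).
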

\begin{proof}
The proof does not depend on the base, so we omit it from the notation. Note that the first claim is equivalent to $A^{*,*}_\eta(\Proj^{2n},x)=0$ for a rational point $x\in \Proj^{2n}$ and the second one is equivalent to the claim that 
\[
H_{2n}^A\colon A^{*,*}_\eta(\Proj^{2n-1},H_{2n}(y))\xrightarrow{} A^{*,*}_\eta(\A^{2n}-0,y)
\]
is an isomorphism for a rational point $y\in (\A^{2n}-0)$.

Proceed by induction: the case of $\Proj^0$ is trivial. Denote
\[
1_{\Proj^k}=[1:1:\ldots:1]\in\mathbb{P}^{k},\quad 0_{\Proj^k}=[0:\ldots:0:1]\in\mathbb{P}^{k},\quad
1_{\A^k}=(1,1,\ldots, 1)\in \A^{k}-0.
\]

$\mathbf{2n\to 2n+1}$: 
Consider the long exact sequence associated to the closed embedding $(\Proj^{2n},1_{\Proj^{2n}})\to (\Proj^{2n+1},1_{\Proj^{2n+1}})$ given by $x\mapsto [1:x]$:
\[
\ldots\xrightarrow{} A^{*-1,*}_\eta(\Proj^{2n},1_{\Proj^{2n}})\xrightarrow{\partial} A^{*,*}_\eta(\Proj^{2n+1}/\Proj^{2n})\xrightarrow{r^A} A^{*,*}_\eta(\Proj^{2n+1},1_{\Proj^{2n+1}}) \xrightarrow{} A^{*,*}_\eta(\Proj^{2n},1_{\Proj^{2n}}) \xrightarrow{\partial}\ldots
\]
By the induction assumption we know that $A^{*,*}_\eta(\Proj^{2n},1_{\Proj^{2n}})=0$, thus $r^A$ is an isomorphism. Moreover, $\Proj^{2n+1}-0_{\Proj^{2n+1}}\cong \struct_{\Proj^{2n}}(1)$ is a vector bundle over $\Proj^{2n}$ yielding 
\[
A^{*,*}_\eta(\Proj^{2n+1}/(\Proj^{2n+1}-0_{\Proj^{2n+1}}))\cong A^{*,*}_\eta(\Proj^{2n+1}/\Proj^{2n}).
\]
Applying Lemma~\ref{HopfSuspended} we obtain:
\[
A^{*,*}_\eta(\Proj^{2n+1},1_{\Proj^{2n+1}})\cong A^{*,*}_\eta(\Proj^{2n+1}/\Proj^{2n})\cong A^{*,*}_\eta(\Proj^{2n+1}/(\Proj^{2n+1}-0_{\Proj^{2n+1}})) \cong A^{*,*}_\eta(\A^{2n+2}-0,1_{\A^{2n+2}}).
\]
One can easily check that the above composition is given precisely by $H_{2n+2}^A$. The canonical isomorphism $(\A^{2n+2}-0,1_{\A^{2n+2}})\cong S^{4n+3,2n+2}$ (see \cite[Example 2.20]{MV99}) yields $A^{*,*}_\eta(\A^{2n+2}-0,1_{\A^{2n+2}}) \cong A^{*-4n-3,*-2n-2}_\eta(\pt)$.

$\mathbf{2n-1\to 2n}$:  
Recall that the complement to the zero section of $\struct_{\Proj^{2n-1}}(1)$ is isomorphic to $(\A^{2n}-0)$ and the projection composed with this isomorphism gives a projection $(\A^{2n}-0)\to \Proj^{2n-1}$ which coincides with $H_{2n}$. The long exact sequence
\[
\xymatrix @R=1pc @C=1pc{
\ldots \ar[r] & A^{*,*}_\eta(\Th(\struct_{\Proj^{2n-1}}(1))) \ar[r] & A^{*,*}_\eta(\struct_{\Proj^{2n-1}}(1)) \ar[r]  & A^{*,*}_\eta(\struct_{\Proj^{2n-1}}(1)-\Proj^{2n-1}) \ar[r] & \ldots \\
& & A^{*,*}_\eta(\Proj^{2n-1}) \ar@{=}[u] \ar[r]^{H_{2n}^A}_\simeq & A^{*,*}_\eta(\A^{2n}-0) \ar@{=}[u]& 
}
\]
yields $A^{*,*}_\eta(\Th(\struct_{\Proj^{2n-1}}(1)))=0$. Embed $\A^{2n}\to\Proj^{2n},\, x\mapsto [x:1]$. Excision provides an isomorphism
\[
A^{*,*}_\eta(\Proj^{2n}/\A^{2n}) \cong A^{*,*}_\eta(\Th(\struct_{\Proj^{2n-1}}(1)))=0.
\] 
$\A^1$-contractibility of $\A^{2n}$ gives us the claim: 
\[
A^{*,*}_\eta(\Proj^{2n},1_{\Proj^{2n}}) \cong A^{*,*}_\eta(\Proj^{2n}/\A^{2n})=0.\qedhere
\]
\end{proof}

\begin{corollary}
Let $A$ be a commutative ring $T$-spectrum and let $E$ be a vector bundle of odd rank over a smooth variety $X$. Then canonical projection $p\colon \Proj_X(E)\to X$ induces an isomorphism 
\[
p^A\colon A^{*,*}_\eta(X)\xrightarrow{\simeq} A^{*,*}_\eta(\Proj_X(E)).
\]
\end{corollary}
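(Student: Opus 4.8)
The plan is to reduce the statement to the trivial projective bundle treated in Theorem~\ref{Th:proj_coh}(1) by a Zariski-local argument and to glue the local isomorphisms via Mayer--Vietoris. Since $E$ has odd rank, write $\rank E=2n+1$, so that the fibers of $p\colon \Proj_X(E)\to X$ are copies of $\Proj^{2n}$. A vector bundle is Zariski-locally trivial and $X$ is of finite type over $k$, hence quasi-compact, so there is a finite open cover $X=U_1\cup\dots\cup U_m$ on which $E$ is trivial.

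First I would handle the local model. If $E|_U\cong \triv^{2n+1}_U$ over an open $U\subseteq X$, a choice of trivialization induces an isomorphism of varieties $\theta\colon \Proj_U(E|_U)\xrightarrow{\simeq}\Proj^{2n}_U$ commuting with the projections to $U$, that is $p_U=\mathrm{pr}\circ\theta$, where $p_U\colon \Proj_U(E|_U)\to U$ is the restriction of $p$ and $\mathrm{pr}\colon \Proj^{2n}_U\to U$ is the canonical projection of the trivial bundle. Hence $p_U^A=\theta^A\circ\mathrm{pr}^A$. The map $\theta^A$ is an isomorphism since it is induced by an isomorphism of varieties, and $\mathrm{pr}^A$ is an isomorphism by Theorem~\ref{Th:proj_coh}(1); therefore $p_U^A\colon A^{*,*}_\eta(U)\to A^{*,*}_\eta(\Proj_U(E|_U))$ is an isomorphism, independently of the chosen trivialization.

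Next I would globalize by induction on the size $m$ of a trivializing cover, the base case $m=1$ being exactly the local model above. For the inductive step set $V=U_1\cup\dots\cup U_{m-1}$ and $U=U_m$, so that $X=V\cup U$, while $V\cap U=\bigcup_{i<m}(U_i\cap U_m)$ inherits a trivializing cover of size $m-1$. The projection $p$ carries the open cover $\{p^{-1}V,p^{-1}U\}$ of $\Proj_X(E)$ to the cover $\{V,U\}$ of $X$, and $p^{-1}(V\cap U)=\Proj_{V\cap U}(E|_{V\cap U})$. By naturality of the Mayer--Vietoris sequence (Definition~\ref{d:coht}(3)), $p^A$ induces a morphism from the Mayer--Vietoris sequence of $X$ relative to $\{V,U\}$ to that of $\Proj_X(E)$ relative to $\{p^{-1}V,p^{-1}U\}$. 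By the inductive hypothesis the comparison maps over $V$, over $U$, and over $V\cap U$ are all isomorphisms, so the five lemma yields that $p^A\colon A^{*,*}_\eta(X)\to A^{*,*}_\eta(\Proj_X(E))$ is an isomorphism.

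The only point requiring care is the compatibility of the Mayer--Vietoris ladder with $p^A$, which follows from the functoriality asserted in Definition~\ref{d:coht}(3) once one checks that $p$ respects the two open covers; this is the mild bookkeeping obstacle, but it involves no computation. The essential input—the vanishing phenomenon for even-dimensional projective space that forces $A^{*,*}_\eta(\Proj^{2n}_X)\cong A^{*,*}_\eta(X)$—is already supplied by Theorem~\ref{Th:proj_coh}(1), so nothing further is needed beyond the local triviality of $E$ and the exactness formalism.
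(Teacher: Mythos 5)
Your proof is correct and is essentially the paper's own argument: the paper's proof consists of the single line ``Follows via Mayer--Vietoris long exact sequence,'' and your write-up is exactly that argument made explicit --- local trivialization of $E$, reduction to Theorem~\ref{Th:proj_coh}(1) on trivializing opens, and induction on the size of the cover using naturality of Mayer--Vietoris and the five lemma.
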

\begin{proof}
Follows via Mayer-Vietoris long exact sequence.
\end{proof}

\begin{corollary}
\label{c:twistedpbundle}
Let $A$ be a commutative ring $T$-spectrum and let $V$ be a vector bundle over a smooth variety $X$. Then projection $p\colon \Proj^{2n}_X\to X$ induces an isomorphism $p^A\colon A^{*,*}_\eta(X;V) \xrightarrow{\simeq} A^{*,*}_\eta(\Proj^{2n}_X; p^*V)$.
\end{corollary}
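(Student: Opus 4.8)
The plan is to reduce the twisted statement to the untwisted computation of Theorem~\ref{Th:proj_coh}(1) by means of the localization sequence. By definition $A^{*,*}_\eta(X;V)=A^{*+2m,*+m}_\eta(\Th(V))$ and $A^{*,*}_\eta(\Proj^{2n}_X;p^*V)=A^{*+2m,*+m}_\eta(\Th(p^*V))$ with $m=\rank V=\rank p^*V$, and $p^A$ is induced by the map $\Th(p^*V)\to\Th(V)$. Since the bidegree shift agrees on both sides, it suffices to prove that this map induces an isomorphism on $A^{*,*}_\eta(-)$.

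First I would set up the two localization sequences attached to the zero sections $X\hookrightarrow V$ and $\Proj^{2n}_X\hookrightarrow p^*V$, using $\Th(V)=V/(V-X)$ and $\Th(p^*V)=p^*V/(p^*V-\Proj^{2n}_X)$. The projection $\widetilde p\colon p^*V\to V$ (the first projection under the identification $p^*V\cong V\times\Proj^{2n}$) carries the zero section of $p^*V$ to that of $V$ and the complement to the complement, so it induces a map of localization sequences, i.e. a commutative ladder relating $A^{*,*}_\eta(\Th(V))$, $A^{*,*}_\eta(V)$, $A^{*,*}_\eta(V-X)$ to their $p^*V$-counterparts.

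Then I would identify the two right-hand vertical maps. By homotopy invariance the middle terms are $A^{*,*}_\eta(X)$ and $A^{*,*}_\eta(\Proj^{2n}_X)$, and compatibility of $\widetilde p$ with the zero sections shows the middle map is exactly $p^A$, an isomorphism by Theorem~\ref{Th:proj_coh}(1). For the right-hand map, observe that removing the zero sections yields $p^*V-\Proj^{2n}_X\cong\Proj^{2n}_{V-X}$, the trivial $\Proj^{2n}$-bundle over the smooth variety $V-X$, and the induced map is the corresponding projection pullback; this is again an isomorphism by Theorem~\ref{Th:proj_coh}(1), now applied with base $V-X$ (the theorem being insensitive to the choice of base). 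The five lemma then forces the left-hand map $A^{*,*}_\eta(\Th(V))\to A^{*,*}_\eta(\Th(p^*V))$ to be an isomorphism, which is the claim.

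The only genuinely delicate points are bookkeeping ones: checking that the localization sequence is natural with respect to $\widetilde p$ (so that the ladder commutes) and recognizing the punctured total space $p^*V-\Proj^{2n}_X$ as a trivial projective bundle over the smooth variety $V-X$, so that Theorem~\ref{Th:proj_coh}(1) can be invoked away from the original base $X$. Once these identifications are in place the argument is a routine five-lemma chase; note that all the requisite exactness and homotopy invariance properties are available for $A^{*,*}_\eta(-)$ since it is a cohomology theory satisfying the properties of Definition~\ref{d:coht}.
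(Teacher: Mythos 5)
Your proposal is correct and follows essentially the same route as the paper: the paper also compares the localization sequences for the zero sections of $V$ and $p^*V$, identifies $p^*V-\Proj^{2n}_X$ with $(V-X)\times\Proj^{2n}$ so that Theorem~\ref{Th:proj_coh} applies over the base $V-X$, and concludes by the five lemma. The points you flag as delicate (naturality of the ladder, recognizing the punctured total space as a trivial projective bundle) are precisely the implicit steps in the paper's argument.
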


\begin{proof}
Consider the following diagram consisting of long exact sequences associated to the zero sections of $V$ and $p^*V$.
\[
\xymatrix @R=1pc @C=1pc{
\dots \ar[r] & A_\eta^{*-2n,*-n}(X;V) \ar[r]\ar[d]^{p^A} & A_\eta^{*,*}(X) \ar[r]\ar[d]_\simeq^{p^A} & A_\eta^{*,*}(V- X) \ar[r] \ar[d]_\simeq^{p^A} & \dots \\
\dots \ar[r] & A_\eta^{*-2n,*-n}(\Proj^{2n}_X;p^*V) \ar[r] & A_\eta^{*,*}(\Proj^{2n}_X) \ar[r] & A_\eta^{*,*}(p^*V -\Proj^{2n}_X ) \ar[r] & \dots 
}
\]
Note that $p^*V-\Proj^{2n}_X=(V-X)\times \mathbb{P}^{2n}$, so the second and the third vertical maps are isomorphisms by Theorem~\ref{Th:proj_coh}. The claim follows via the five lemma.
\end{proof}

\begin{theorem}
\label{t:twistedpbundlevect}
Let $A$ be a commutative ring $T$-spectrum and let $X$ be a smooth variety. Denote $p_1\colon \mathbb{P}_X^k\to X$ and $p_2\colon \mathbb{P}^k_X\to \mathbb{P}^k$ the canonical projections. Consider a vector bundle $V$ over $X$, a vector bundle $E$ over $\Proj^k$ of degree $d= \deg \det E$ and a rational point $a\in \Proj^{k}$. Denote $i\colon X\to \mathbb{P}^k_X$ the closed embedding given by $i(x)=(x,a)$. Then we have the following isomorphisms depending on the parity of $d$ and $k$.
\begin{itemize}
\item[Ia)] If $d=2m-1,\, k=2n-1$ then $A^{*,*}_\eta(\Proj^{k}_X;p_1^*V\oplus p_2^*E)=0$;
\item[Ib)] If $d=2m,\, k=2n-1$ then there is a split short exact sequence 
\[
A^{*-2k,*-k}_\eta(X;i^*(p_1^*V\oplus p_2^*E)\oplus N_{i})\xrightarrow{i_A} A^{*,*}_\eta(\Proj^{k}_X;p_1^*V\oplus p_2^*E)\xrightarrow{i^A} A^{*,*}_\eta(X;i^*(p_1^*V\oplus p_2^*E));
\]
\item[IIa)] If $d=2m-1,\, k=2n$ then 
\[
A^{*-2k,*-k}_\eta(X;i^*(p_1^*V\oplus p_2^*E)\oplus N_{i})\xrightarrow{i_A} A^{*,*}_\eta(\Proj^{k}_X;p_1^*V\oplus p_2^*E)
\]
is an isomorphism;
\item[IIb)] If $d=2m,\, k=2n$ then 
\[
A^{*,*}_\eta(\Proj^{k}_X;p_1^*V\oplus p_2^*E)\xrightarrow{i^A}A^{*,*}_\eta(X;i^*(p_1^*V\oplus p_2^*E))
\]
is an isomorphism.
\end{itemize}
\end{theorem}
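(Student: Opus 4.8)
The plan is to prove all four cases simultaneously by induction on $k$, degenerating $\Proj^k_X$ to $\Proj^{k-1}_X$ through localization sequences. The bundle $V$ plays no active role: every map in sight is a map of $A^{*,*}_\eta(X;V)$-modules, natural in the $V$-direction, so I carry $V$ along as a passive twist (formally, one smashes the whole geometric picture with $\Th(V)$ rather than with $X_+$, exactly as in the proof of Lemma~\ref{HopfSuspended}). The key bookkeeping observation is that restricting $E$ to a hyperplane $\Proj^{k-1}\subset\Proj^k$ and adding the normal bundle $\struct(1)$ turns the determinant into $\struct_{\Proj^{k-1}}(d+1)$; thus passing from $\Proj^k$ to $\Proj^{k-1}$ flips the parity of both $k$ and $d$, the invariant $d+k\bmod 2$ is preserved, and the four cases pair up as Ia $\leftrightarrow$ IIb and Ib $\leftrightarrow$ IIa. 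In particular a general $E$ needs no separate reduction to its determinant: the inductive hypothesis refers only to $\deg E$ and $k$, and the bundles $E|_{\Proj^{k-1}}\oplus\struct(1)$ occurring at each step have controlled degree.

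I would use two localization sequences for the twist $W=p_1^*V\oplus p_2^*E$. First, the \emph{hyperplane sequence} for $\Proj^{k-1}_X\subset\Proj^k_X$ with open complement the affine chart $\A^k_X$: by homotopy purity its relative term is the twist $A^{*-2,*-1}_\eta(\Proj^{k-1}_X;W')$ with $W'=p_1^*V\oplus p_2^*(E|_{\Proj^{k-1}}\oplus\struct(1))$ of degree $d+1$, the restriction to the chart is (after contracting $\A^k_X$ to $a$) exactly $i^A$, and the chart term is $A^{*,*}_\eta(X;i^*W)\cong A^{*,*}_\eta(X;V)$. Second, the \emph{point sequence} for the section $i\colon X\to\Proj^k_X$ (codimension $k$, with $N_i\cong\triv^k_X$): its relative term is $A^{*-2k,*-k}_\eta(X;i^*W\oplus N_i)$ and the structure map is exactly $i_A$, while the open complement $\Proj^k_X-i(X)$ is $\A^1$-equivalent to $\Proj^{k-1}_X$, leaving the term $A^{*,*}_\eta(\Proj^{k-1}_X;W'')$ with $W''=p_1^*V\oplus p_2^*(E|_{\Proj^{k-1}})$ of degree $d$. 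Cases IIa and IIb then fall out formally from case Ia one dimension down: for IIb ($d,k$ even) the hyperplane relative term has degree $d+1$ over $\Proj^{k-1}$ of odd dimension, hence vanishes by Ia, so $i^A$ is an isomorphism; for IIa ($d$ odd, $k$ even) the open term of the point sequence is $A^{*,*}_\eta(\Proj^{k-1}_X;W'')$ with $d$ odd over $\Proj^{k-1}$ of odd dimension, hence vanishes by Ia, so $i_A$ is an isomorphism.

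The substance is case Ia ($d,k$ both odd), for which I claim $A^{*,*}_\eta(\Proj^k_X;W)=0$. Feeding IIb at level $k-1$ into the hyperplane sequence makes both outer terms isomorphic to $A^{*,*}_\eta(X;V)$, and a degree count shows the connecting homomorphism is an endomorphism of $A^{*,*}_\eta(X;V)$ of bidegree $(-1,-1)$. I expect to identify this connecting map with multiplication by $\eta$ up to a unit, so that it becomes an isomorphism after inverting $\eta$ and squeezes the middle term to zero. This identification is the heart of the matter and is the twisted incarnation of Lemma~\ref{HopfSuspended}: the cofibre of the chart inclusion, seen in the normal $\struct(1)$-direction, is a suspension of the Hopf map $H$, exactly as in the vanishing $A^{*,*}_\eta(\Th(\struct_{\Proj^{2n-1}}(1)))=0$ inside the proof of Theorem~\ref{Th:proj_coh}; the only new point is that one must run that argument after smashing with $\Th(V)$ and in the presence of a general twist $E$, so that the square-symbol trivialisations of Lemma~\ref{lemma_square_symbol} still apply. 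I expect the main obstacle to be precisely this: pinning the connecting map to $\eta$ once $E$ is an arbitrary bundle rather than $\struct(1)$.

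Finally case Ib ($d$ even, $k$ odd): here the hyperplane sequence, fed by IIa at level $k-1$, has both outer terms isomorphic to shifts of $A^{*,*}_\eta(X;V)$, and to obtain the split short exact sequence I would show the connecting maps vanish by exhibiting a section of $i^A$. Since $d$ is even, $\det E=\struct(d)=\struct(d/2)^{\otimes 2}$ is a square, so by Lemma~\ref{lemma_square_symbol} its square-symbol is trivial, and I would use this to produce a class $\tau\in A^{0,0}_\eta(\Proj^k_X;p_2^*E)$ restricting to the canonical unit at $a$. Cupping with $\tau$ after $p^A$ then splits $i^A$, because $i^*p^*=\id$ and $i^*\tau$ is the unit, giving the splitting in the same spirit as the choice of section in Theorem~\ref{Th:proj_coh}. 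The base of the induction is $k=0$, where $\Proj^0_X=X$ and the statement (case IIb) is tautological, together with $k=1$, which is Theorem~\ref{Th:proj_coh} and the identification of $H$ with $\eta$. The two places I expect to cost real work are the twisted identification of the connecting map with $\eta$ in Ia and the construction of the orientation class $\tau$ for even $d$ in Ib; both hinge on the parity of $d$ through $\eta$ and through squares.
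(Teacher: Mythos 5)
Your reduction of IIa) and IIb) to Ia) one dimension down, and your parity bookkeeping, coincide with the paper's argument, and carrying $V$ passively is also how the paper handles the base. But the two steps you yourself flag as ``the heart of the matter'' are genuine gaps, and in both cases the paper's proof is engineered precisely to avoid the difficulty you propose to attack head-on. For Ia) you need the connecting map of the hyperplane sequence, after composing with the IIb)-identification at level $k-1$ (which is itself the inexplicit isomorphism $i^A$), to be multiplication by $\eta$ up to a unit. Nothing in Lemma~\ref{HopfSuspended} gives this once a nontrivial twist $E$ is present: the untwisted identification rests on $\Th(\struct_{\Proj^{k-1}}(1))$ being the cone of $H_k$, i.e.\ on the complement of the zero section of $\struct(1)$ being $\A^{k}-0$, and for the twisted relative term $\Th(E|_{\Proj^{k-1}}\oplus\struct(1))$ no such Hopf-map model exists; even the phrase ``up to a unit'' presupposes that the connecting endomorphism is multiplication by an element of $A^{-1,-1}_\eta(\pt)$, which itself requires proof. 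You would in effect have to prove a twisted analogue of Lemma~\ref{HopfSuspended} from scratch. The paper never identifies any connecting map. It proves Ia) for $\Proj^1$ by embedding $\Proj^1$ linearly into $\Proj^{2m}$, so that $A^{*,*}_\eta(\Proj^1;\struct_{\Proj^1}(1)^{\oplus 2m-1})$ appears as the relative purity term in a sequence whose other two terms are $A^{*,*}_\eta(\Proj^{2m})$ and $A^{*,*}_\eta(\Proj^{2m-2})$, both computed by Theorem~\ref{Th:proj_coh}; exactness squeezes the twisted term to zero. Arbitrary odd-degree $E$ on $\Proj^1$ is then reached by $\widetilde{K}_0$-invariance (Corollary~\ref{c:K0_coh}) together with duality (Lemma~\ref{l:dual_coh}) --- precisely the reduction you declare unnecessary. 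You do need it: your induction bottoms out at $k=1$, where the hypothesis must cover all odd-degree bundles, not just $\struct(1)$. The induction over odd $k$ then runs via linear embeddings $\Proj^1\subset\Proj^{2n+1}$ with complement retracting to $\Proj^{2n-1}$, both outer terms vanishing by the inductive hypothesis, so $\eta$-invertibility enters only through Theorem~\ref{Th:proj_coh}.

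The second gap is the splitting class $\tau$ in Ib). Lemma~\ref{lemma_square_symbol} asserts that certain self-maps $f_{s^2}$ of $X_+\wedge T$ are stably the identity; it produces no global class in $A^{0,0}_\eta(\Proj^{k}_X;p_2^*E)$, and the existence of a $\tau$ with $i^A(\tau)$ the unit is equivalent (via cupping with $p_1^A(-)$) to the split surjectivity of $i^A$ you are trying to prove, so as written this step is circular. The paper obtains the splitting indirectly, and only after Ia) is available: in the localization sequence for the complement of the zero section of $\struct_{\Proj^{2n-1}}(1)$, twisted by $E$, the relative term carries the twist $E\oplus\struct_{\Proj^{2n-1}}(1)$ of odd degree and vanishes by Ia), so $H_{2n}^A\colon A^{*,*}_\eta(\Proj^{2n-1};E)\to A^{*,*}_\eta(\A^{2n}-0;H_{2n}^*E)$ is an isomorphism; on $\A^{2n}-0$ one has $\widetilde{K}_0(\A^{2n}-0)=0$, so the twist is removed by a stable trivialization and Corollary~\ref{c:suspuntwist}, and restriction to a rational point is then split by pullback along the projection $\A^{2n}-0\to\pt$. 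The section of $i^A$ is the composite of these identifications; it exists because of Ia) and stable triviality of bundles on $\A^{2n}-0$, not because $\det E$ is a square. If you repair both steps along these lines, your induction collapses into the paper's proof; as proposed, neither Ia) nor Ib) is actually established.
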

\begin{proof}

The proof does not depend on the base $X$ and vector bundle $V$, so we omit them from notation and suppose that $X=\pt$, $V=0$ and $p_2=\id$. In case of nontrivial $X$ and $V$ everything is virtually the same except that one should use Corollary~\ref{c:twistedpbundle} in place of Theorem~\ref{Th:proj_coh}.

The proof goes as follows. First we obtain { Ia)} for $\Proj^1$ and using an induction step from $k=2n-1$ to $k=2n+1$ obtain {Ia)} in general. Then we easily deduce {IIa)} and {IIb)} from {Ia)}. The most tricky part {Ib)} is dealt in the end, we derive it from {Ia)} and {IIb)} for $k=2n-2$.

Throughout the proof we continuously use long exact sequences of the following kind. Consider a linear embedding $r\colon \Proj^{l'} \to \Proj^l$. The open complement $q\colon \Proj^{l}-\Proj^{l'}\subset \Proj^l$ is a vector bundle over $\Proj^{l-l'-1}$ and excision shows that $\Proj^{l}/(\Proj^{l}-\Proj^{l'})$ is isomorphic to $\Th(\struct_{\Proj^{l'}}(1)^{\oplus (l-l')})$. Thus for a vector bundle $\mathcal{E}$ over $\Proj^l$ we have a long exact sequence
\[
\dots \to A^{*-2l+2l',*-l+l'}_\eta (\Proj^{l'};r^*\mathcal{E}\oplus \struct_{\Proj^{l'}}(1)^{\oplus (l-l')})\xrightarrow{r_A} A^{*,*}_\eta (\Proj^{l};\mathcal{E})
\xrightarrow{q^A} A^{*,*}_\eta (\Proj^{l-l'-1};q^*\mathcal{E})\to\dots
\]

{\bf Ia) for $\Proj^1$.} Embed $\Proj^1$ to $\Proj^{2m}$. We have a long exact sequence
\[
\dots \to A^{*-4m+2,*-2m+1}_\eta (\Proj^1;\struct_{\Proj^1}(1)^{\oplus 2m-1})\to A^{*,*}_\eta (\Proj^{2m})\xrightarrow{q^A} A^{*,*}_\eta (\Proj^{2m-2})\to \dots
\]
Theorem~\ref{Th:proj_coh} yields that $q^A$ is an isomorphism and $A^{*,*}_\eta (\Proj^1;\struct_{\Proj^1}(1)^{\oplus 2m-1})=0$. Corollary~\ref{c:K0_coh} combined with Lemma~\ref{l:dual_coh} and the fact that 
\[
K_0(\Proj^1)=\mathbb{Z}\cdot [\triv_{\Proj^1}]\oplus \mathbb{Z}\cdot [\struct_{\Proj^1}(1)]
\]
gives claim Ia) for $\Proj^1$.

{\bf Ia) for $\mathbf{\Proj^{2n-1}}$ $\Rightarrow$ Ia) for $\mathbf{\Proj^{2n+1}}$.} Consider a linear embedding $r\colon \Proj^1 \to \Proj^{2n+1}$ and the corresponding long exact sequence
\[
\dots \to A^{*-4n,*-2n}_\eta (\Proj^1;r^*E\oplus \struct_{\Proj^1}(1)^{\oplus 2n})\to A^{*,*}_\eta (\Proj^{2n+1};E) 
\xrightarrow{q^A} A^{*,*}_\eta (\Proj^{2n-1};q^*E)\to \dots
\]
By the induction assumption we have $A^{*,*}_\eta (\Proj^{2n-1};q^*E)=0$ and $A^{*,*}_\eta (\Proj^1;r^*E\oplus \struct_{\Proj^1}(1)^{\oplus 2n})=0$ , so the claim follows.

{\bf Ia)$\Rightarrow$ IIa).} Consider the long exact sequence associated to the embedding $i\colon \Proj^{0}\to \Proj^{2n}$.
\[
\dots \to A^{*-4n,*-2n}_\eta (\Proj^{0};i^*E\oplus \struct_{\Proj^{0}}(1)^{\oplus 2n})\xrightarrow{i_A} A^{*,*}_\eta (\Proj^{2n};E)
\xrightarrow{q^A} A^{*,*}_\eta (\Proj^{2n-1};q^*E)\to \dots
\]
Assumption Ia) yields that $A^{*,*}_\eta (\Proj^{2n-1};q^*E)=0$, thus $i_A$ is an isomorphism.

{\bf Ia)$\Rightarrow$ IIb).} Consider a linear embedding $r\colon \Proj^{2n-1}\to \Proj^{2n}$ and the corresponding long exact sequence
\[
\dots \to A^{*-2,*-1}_\eta (\Proj^{2n-1};r^*E\oplus \struct_{\Proj^{2n-1}}(1))\to A^{*,*}_\eta (\Proj^{2n};E)
 \xrightarrow{q^A} A^{*,*}_\eta (\Proj^0;q^*E)\to \dots
\]
Assumption Ia) yields that $A^{*-2,*-1}_\eta (\Proj^{2n-1};r^*E\oplus \struct_{\Proj^{2n-1}}(1))=0$, thus $q^A$ is an isomorphism and a choice of a rational point on $\Proj^{2n}$ provides an inverse.

{\bf Ia) and {\bf IIb)} for $\Proj^{2n-2}$ $\Rightarrow$ Ib).} Consider the long exact sequence associated to the embedding $i\colon \Proj^0 \to \Proj^{2n-1}$.
\[
\dots \to A^{*-4n+2,*-2n+1}_\eta (\Proj^{0};i^*E\oplus \struct_{\Proj^{0}}(1)^{\oplus 2n-1})\xrightarrow{i_A} A^{*,*}_\eta (\Proj^{2n-1};E) 
\xrightarrow{q^A} A^{*,*}_\eta (\Proj^{2n-2};q^*E)\to \dots
\]
Assumption {IIb)} for $\Proj^{2n-2}$ yields $A^{*,*}_\eta (\Proj^{2n-2};q^*E)\cong A^{*,*}_\eta(\pt;i^*E)$. Thus we have a long exact sequence
\[
\dots \to A^{*-4n+2,*-2n+1}_\eta(\pt;i^*E\oplus N_i)\xrightarrow{i_A} A^{*,*}_\eta(\Proj^{2n-1};E)
\xrightarrow{i^A}  A^{*,*}_\eta(\pt;i^*E)\to \dots
\]
Hence it is sufficient to show that $i^A$ is a split surjection.

Consider the twisted version of the localization sequence for the complement to the zero section of $\struct_{\Proj^{2n-1}}(1)$ (recall that we denote the projection for the complement $H_{2n}\colon \mathbb{A}^{2n}-0 \to \Proj^{2n-1}$):
\[
\dots \to A^{*-2,*-1}_\eta (\Proj^{2n-1};E\oplus \struct_{\Proj^{2n-1}}(1))\to A^{*,*}_\eta (\Proj^{2n-1};E) 
\xrightarrow{H_{2n}^A}  A^{*,*}_\eta (\mathbb{A}^{2n}-0;H_{2n}^*E)\to \dots
\]
Assumption Ia) gives $A^{*-2,*-1}_\eta (\Proj^{2n-1};E\oplus \struct_{\Proj^{2n-1}}(1))=0$, thus $H_{2n}^A$ is an isomorphism. Hence it is sufficient to show that 
\[
j^A\colon A^{*,*}_\eta (\mathbb{A}^{2n}-0;H_{2n}^*E) \to A^{*,*}_\eta (\pt;j^*H_{2n}^*E)
\]
is a split surjection for an embedding $j\colon \pt \to \mathbb{A}^{2n}-0$. This basically follows from Corollary~\ref{c:K0_coh} and the fact that $\widetilde{K}_0(\mathbb{A}^{2n}-0)=0$. More precisely, every vector bundle over $Y=\mathbb{A}^{2n}-0$ is stably trivial, thus we may find some $s,t$ and an isomorphism $\theta\colon H_{2n}^*E\oplus \triv_{Y}^s \cong\triv_{Y}^t$. Corollary~\ref{c:suspuntwist} gives us a commutative diagram
\[
\xymatrix @R=1pc @C=1pc{
A^{*,*}_\eta (Y;H_{2n}^*E) \ar[r]^(0.42)\simeq \ar[d]^{j^A} & A^{*,*}_\eta (Y;H_{2n}^*E\oplus \triv_{Y}^s) \ar[r]_(0.60)\simeq^(0.60){\theta^A} \ar[d]^{j^A} & A^{*,*}_\eta (Y; \triv_{Y}^t) \ar[d]^{j^A}  & A^{*,*}_\eta (Y) \ar[l]_(0.4)\simeq \ar[d]^{j^A} \\
A^{*,*}_\eta (\pt;j^*H_{2n}^*E) \ar[r]^(0.42)\simeq & A^{*,*}_\eta (\pt;j^*H_{2n}^*E\oplus \triv_{\pt}^s) \ar[r]_(0.60)\simeq^(0.65){j^*\theta^A} & A^{*,*}_\eta (\pt;\triv_{\pt}^t) & A^{*,*}_\eta (\pt) \ar[l]_(0.4)\simeq
}
\]
We denote the vertical morphisms by the same letter since all of them are induced by the embedding $j$. The rightmost vertical morphism is clearly surjective with a splitting given by $p^A$, where $p\colon Y\to \pt$ is the projection. Hence the leftmost vertical morphism is a split surjection as well and we get the claim.
\end{proof}

We sum up our computations in the following form. Note that the isomorphisms in the following Corollary depend on the choice of trivializations of certain bundles.

\begin{corollary} \label{cor:twistedpbundlevect}
In notation of Theorem~\ref{t:twistedpbundlevect} we have the following isomorphisms depending on the parity of $d$ and $k$.
\begin{itemize}
\item[Ia)] $d=2m-1,\, k=2n-1\colon\, A^{*,*}_\eta(\Proj^{k}_X;p_1^*V\oplus p_2^*E)=0$.
\item[Ib)] $d=2m,\, k=2n-1\colon
A^{*,*}_\eta(\Proj^{k}_X;p_1^*V\oplus p_2^*E)\cong A^{*,*}_\eta(X;V)\oplus A^{*-2k,*-k}_\eta(X;V)$.
\item[IIa)] $d=2m-1,\, k=2n\colon\, A^{*,*}_\eta(\Proj^{k}_X;p_1^*V\oplus p_2^*E)\cong A^{*-2k,*-k}_\eta(X;V)$.
\item[IIb)] $d=2m,\, k=2n\colon\, A^{*,*}_\eta(\Proj^{k}_X;p_1^*V\oplus p_2^*E)\cong A^{*,*}_\eta(X;V)$.
\end{itemize}
\end{corollary}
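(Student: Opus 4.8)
The plan is to read each of the four isomorphisms directly off the corresponding case of Theorem~\ref{t:twistedpbundlevect}, after identifying the restriction $i^*(p_1^*V\oplus p_2^*E)$ and the normal bundle $N_i$. All the genuine content already sits in the theorem; the corollary is a repackaging that replaces the twists appearing there (which involve $i^*(p_1^*V\oplus p_2^*E)$ and $N_i$) by a twist by $V$ alone, using the untwisting isomorphism of Corollary~\ref{c:suspuntwist} to absorb the trivial summands.

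First I would compute the two bundles over $X$ that occur in the theorem. Since $p_1\circ i=\id_X$, we have $i^*p_1^*V\cong V$; and since $p_2\circ i\colon X\to \Proj^k$ is the constant map at the rational point $a$, the bundle $i^*p_2^*E$ is a pullback of $E$ along a constant map and is therefore trivial, $i^*p_2^*E\cong \triv_X^{\rank E}$. Hence $i^*(p_1^*V\oplus p_2^*E)\cong V\oplus \triv_X^{\rank E}$. For the normal bundle, $i$ realizes $X$ as the section $X\times\{a\}\subset X\times\Proj^k=\Proj^k_X$, so $N_i$ is the pullback of the normal bundle of $\{a\}$ in $\Proj^k$, namely of the $k$-dimensional tangent space $T_a\Proj^k$; thus $N_i\cong\triv_X^k$. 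Consequently $i^*(p_1^*V\oplus p_2^*E)\oplus N_i\cong V\oplus \triv_X^{\rank E+k}$.

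Next I would discard the trivial summands. Since $A_\eta$ satisfies the properties of Definition~\ref{d:coht}, Corollary~\ref{c:suspuntwist} applies to $A_\eta$ and gives isomorphisms
\[
A^{*,*}_\eta(X;i^*(p_1^*V\oplus p_2^*E))\cong A^{*,*}_\eta(X;V),\qquad A^{*-2k,*-k}_\eta(X;i^*(p_1^*V\oplus p_2^*E)\oplus N_i)\cong A^{*-2k,*-k}_\eta(X;V).
\]
These isomorphisms depend on the chosen trivializations of $i^*p_2^*E$ and $N_i$, which is precisely the source of the non-canonicity flagged in the statement. Substituting them into Theorem~\ref{t:twistedpbundlevect} yields the corollary case by case: Ia) is verbatim; in IIa) the map $i_A$ becomes the asserted isomorphism $A^{*-2k,*-k}_\eta(X;V)\cong A^{*,*}_\eta(\Proj^k_X;p_1^*V\oplus p_2^*E)$; in IIb) the map $i^A$ becomes $A^{*,*}_\eta(\Proj^k_X;p_1^*V\oplus p_2^*E)\cong A^{*,*}_\eta(X;V)$; and in Ib) the split short exact sequence identifies the middle term with the direct sum of its two outer terms, giving $A^{*,*}_\eta(X;V)\oplus A^{*-2k,*-k}_\eta(X;V)$.

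I do not expect a serious obstacle here: the only points requiring care are the two bundle identifications above (that a constant-map pullback and the normal bundle of a section through a fixed point are trivial) and the bookkeeping of the degree shift $(*-2k,*-k)$ coming from the twist by the rank-$k$ bundle $N_i$. The splitting needed for Ib) is already furnished by the theorem, so what remains is a pure assembly step.
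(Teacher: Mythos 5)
Your proposal is correct and matches the paper's own proof: the paper likewise reduces the corollary to Theorem~\ref{t:twistedpbundlevect} via the identifications $N_i\cong\triv_X^{k}$ and $i^*(p_1^*V\oplus p_2^*E)\cong V\oplus\triv_X^{\rank E}$, then strips the trivial summands (the paper cites Corollary~\ref{c:K0_coh}, whose relevant content is exactly the Corollary~\ref{c:suspuntwist} you invoke). Your version is, if anything, slightly more explicit about why those bundle identifications hold and about the source of the non-canonicity, but it is the same argument.
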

\begin{proof}
The claim follows from Theorem~\ref{t:twistedpbundlevect} combined with Corollary~\ref{c:K0_coh} and isomorphisms $N_{i}\cong \triv_X^{k},\, i^*(p_1^*V\oplus p_2^*E)\cong V\oplus \triv_X^{\rank E}$.
\end{proof}

\section{Cohomology twisted by a formal difference of vector bundles}

In this section we introduce twists by a formal difference of vector bundles and establish its basic properties. Roughly speaking, in order to define cohomology groups twisted by formal differences we add a trivialized vector bundle of large rank and use Definition~\ref{d:tw}. Keeping track of all the isomorphisms allows us to obtain functoriality. 

\begin{definition} \label{def:fdtw}
Let $E'$ and $E$ be vector bundles over a smooth variety $X$. Applying Jouanolou's device (\cite{J73}, \cite[\S 4]{We89}) we may assume that $X$ is affine. For a vector bundle $\overline{E}$ and an isomorphism $\theta \colon E\oplus \overline{E}\xrightarrow{\simeq} \triv_X^{2n}$ put
\[
A^{*,*}_{(\overline{E},\theta)}(X;E'\ominus E)=A^{*,*}(X;E'\oplus \overline{E}).
\]
Let $\rho\colon E\oplus\widetilde{E}\xrightarrow{\simeq} \triv_X^{2m}$ be another isomorphism. Define \textit{canonical isomorphism}
\[
\Theta_{(\overline{E},\theta)}^{(\widetilde{E},\rho)}\colon A^{*,*}(X;E'\oplus \overline{E})\xrightarrow{\simeq} A^{*,*}(X;E'\oplus \widetilde{E})
\]
to be given by the following sequence of isomorphisms:
\[
\xymatrix @C=8pc{
A^{*,*}(X;E'\oplus \overline{E}) \ar[d]_{-\cup\Sigma_T^{2m} 1} & A^{*,*}(X;E'\oplus \widetilde{E}) \ar[d]^{-\cup\Sigma_T^{2n} 1} \\
A^{*,*}(X;E'\oplus \overline{E}\oplus \triv_X^{2m}) \ar[d]_{(\id\oplus \rho)^A} & A^{*,*}(X;E'\oplus \widetilde{E} \oplus \triv_X^{2n}) \ar[d]^{(\id\oplus \theta)^A} \\
A^{*,*}(X;E'\oplus \overline{E}\oplus E \oplus \widetilde{E}) \ar[r]^{(\id\oplus \tau(\overline{E},\widetilde{E}))^{A}} & A^{*,*}(X;E'\oplus \widetilde{E}\oplus E \oplus \overline{E})
}
\]
Here $\tau(\overline{E},\widetilde{E})\colon \widetilde{E}\oplus E \oplus \overline{E}\to \overline{E}\oplus E \oplus \widetilde{E}$ is given by the matrix:
\[
\tau=\begin{pmatrix} 0 & 0 & \id_{\overline{E}}\\ 0 & \id_{E} & 0\\ -\id_{\widetilde{E}} & 0 & 0 \end{pmatrix}.
\]
A straightforward computation shows that these isomorphisms are functorial in $X$ and satisfy 
\begin{enumerate}
\item
$\Theta_{(\overline{E},\theta)}^{(\overline{E},\theta)}=\id$;
\item
$\Theta_{(\overline{E}_2,\theta_2)}^{(\overline{E}_3,\theta_3)}\circ \Theta_{(\overline{E}_1,\theta_1)}^{(\overline{E}_2,\theta_2)}=\Theta_{(\overline{E}_1,\theta_1)}^{(\overline{E}_3,\theta_3)}$ for isomorphisms $\theta_i \colon E\oplus \overline{E}_i\xrightarrow{\simeq} \triv_X^{2n_i}$.
\end{enumerate}
These isomorphisms allows us to identify canonically the twisted cohomology groups for different choices of complement bundles. We sometimes omit the explicit choice of $(\overline{E},\theta)$ from the notation and write
\[
A^{*,*}(X;E'\ominus E)=A^{*,*}_{(\overline{E},\theta)}(X;E'\ominus E)=A^{*,*}(X;E'\oplus \overline{E}).
\]
These groups are referred to as \textit{cohomology groups of $X$, twisted by the formal difference of vector bundles $E_1\ominus E_2$}. Pull-back homomorphisms, cup-product and push-forward maps along closed embeddings for the cohomology groups twisted by vector bundles induce respective structures for the cohomology groups twisted by a formal difference of vector bundles provided that the choices of complement bundles agree. Moreover, all these maps respect the canonical isomorphisms. For example, for a closed embedding of smooth varieties $i\colon Y\to X$ and a formal difference of vector bundles $\mathcal{E}=E'\ominus E$ over $X$ the push-forward map described in the end of Definition~\ref{d:tw} gives rise to the push-forward map
\[
\xymatrix @C=1pc{
A^{*,*}_{(i^*\overline{E},i^*\theta)}(Y;i^*\mathcal{E}\oplus N_i) \ar@{=}[r] \ar[d]^{i^{(\overline{E},\theta)}_A} & A^{*,*}(Y;i^*E'\oplus N_i\oplus i^*\overline{E})  \ar[d]^{i_A} \\
A^{*+2n,*+n}_{(\overline{E},\theta)}(X;\mathcal{E})  \ar@{=}[r] & A^{*+2n,*+n}(X;E'\oplus \overline{E})
}
\]
satisfying $\Theta_{(\overline{E},\theta)}^{(\widetilde{E},\rho)}\circ i^{(\overline{E},\theta)}_A=i^{(\widetilde{E},\rho)}_A\circ \Theta_{(i^*\overline{E},i^*\theta)}^{(i^*\widetilde{E},i^*\rho)}$. Having the last equality in mind we usually omit $(\overline{E},\theta)$ from the notation and denote the push-forward map just as 
\[
A^{*,*}(Y;i^*\mathcal{E}\oplus N_i)\xrightarrow{i_A} A^{*+2n,*+n}(X;\mathcal{E}).
\]
\end{definition}

\begin{remark}
For an oriented cohomology theory one has natural isomorphisms 
\[
A^{*,*}(X;E_1\ominus E_2)\cong A^{*,*}(X).
\]
For an $\SL$-oriented cohomology theory Corollary~\ref{c:sluntwist} yields 
\[
A^{*,*}(X;E_1\ominus E_2)\cong A^{*,*}(X;\det E_1 \otimes (\det E_2)^{-1}).
\]
\end{remark}

\begin{lemma}
\label{l:tdc}
Let $\mathcal{V}=V'\ominus V$ be a formal difference of vector bundles over a smooth variety $X$, let $E$ be a vector bundle over $X$ and let $\theta \colon V\oplus \overline{V}\xrightarrow{\simeq} \triv_X^{2n}$ and $\rho \colon E\oplus \overline{E}\xrightarrow{\simeq} \triv_X^{2m}$ be isomorphisms of vector bundles. Then there is an isomorphism
\[
\Xi_{(\overline{V},\theta)}^{(\overline{E},\rho)}\colon A^{*,*}_{(\overline{V},\theta)}(X;\mathcal{V}) \xrightarrow{\simeq} A^{*,*}_{(\overline{V}\oplus \overline{E},\theta\oplus \rho)}(X;\mathcal{V} \oplus E\ominus E)
\]
functorial in $X$ and agreeing with canonical isomorphisms in a sense that 
\[
\Xi_{(\widetilde{V},\theta')}^{(\widetilde{E},\rho')}\circ \Theta_{(\overline{V},\theta)}^{(\widetilde{V},\theta')} =  \Theta_{(\overline{V}\oplus\overline{E},\theta\oplus \rho)}^{(\widetilde{V}\oplus\widetilde{E},\theta'\oplus\rho')} \circ \Xi_{(\overline{V},\theta)}^{(\overline{E},\rho)}
\]
for another choice of complement bundles.
\end{lemma}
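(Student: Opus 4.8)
The plan is to build $\Xi_{(\overline{V},\theta)}^{(\overline{E},\rho)}$ by hand as a short composite of operations that are already known to be isomorphisms, and then to verify functoriality and the stated coherence separately. By Definition~\ref{def:fdtw} the source is $A^{*,*}_{(\overline{V},\theta)}(X;\mathcal{V})=A^{*,*}(X;V'\oplus\overline{V})$ and the target is $A^{*,*}_{(\overline{V}\oplus\overline{E},\theta\oplus\rho)}(X;\mathcal{V}\oplus E\ominus E)=A^{*,*}(X;V'\oplus E\oplus\overline{V}\oplus\overline{E})$. Morally this is an isomorphism because $E\ominus E$ is trivial in $\widetilde{K}_0(X)$ (cf.\ Corollary~\ref{c:K0_coh}), but we need the explicit map. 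I would define $\Xi_{(\overline{V},\theta)}^{(\overline{E},\rho)}$ to be
\begin{multline*}
A^{*,*}(X;V'\oplus\overline{V})\xrightarrow{-\cup\Sigma_T^{2m}1}A^{*,*}(X;V'\oplus\overline{V}\oplus\triv_X^{2m})\\
\xrightarrow{(\id\oplus\rho)^A}A^{*,*}(X;V'\oplus\overline{V}\oplus E\oplus\overline{E})\xrightarrow{\gamma^A}A^{*,*}(X;V'\oplus E\oplus\overline{V}\oplus\overline{E}),
\end{multline*}
where the first arrow is the suspension isomorphism of Corollary~\ref{c:suspuntwist}, the second is induced by $\rho\colon E\oplus\overline{E}\xrightarrow{\simeq}\triv_X^{2m}$, and $\gamma^A$ is induced by the canonical reordering isomorphism interchanging the summands $\overline{V}$ and $E$. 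To match the sign conventions used for the matrix $\tau$ in Definition~\ref{def:fdtw}, I would take $\gamma$ to be the \emph{signed} block permutation, so that it has determinant one on each fibre. Being a composite of three isomorphisms, $\Xi_{(\overline{V},\theta)}^{(\overline{E},\rho)}$ is an isomorphism.

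Functoriality in $X$ is then immediate. For a morphism $f\colon Y\to X$ the pull-back $f^A$ commutes with $-\cup\Sigma_T^{2m}1$ (as $f^A$ is a ring homomorphism carrying the suspended unit to the suspended unit), with $(\id\oplus\rho)^A$ (since $f^*(\id\oplus\rho)=\id\oplus f^*\rho$ and pull-backs along bundle isomorphisms are natural in the base), and with $\gamma^A$ for the same reason. Pasting the three resulting commuting squares gives the functoriality of $\Xi$.

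The substantial point is the compatibility with the canonical isomorphisms, namely
\[
\Xi_{(\widetilde{V},\theta')}^{(\widetilde{E},\rho')}\circ\Theta_{(\overline{V},\theta)}^{(\widetilde{V},\theta')}=\Theta_{(\overline{V}\oplus\overline{E},\theta\oplus\rho)}^{(\widetilde{V}\oplus\widetilde{E},\theta'\oplus\rho')}\circ\Xi_{(\overline{V},\theta)}^{(\overline{E},\rho)}.
\]
I would first apply Jouanolou's device to assume $X$ affine, then expand every occurrence of $\Xi$ and $\Theta$ into its defining composite of suspensions $-\cup\Sigma_T^{k}1$, pull-backs along the bundle isomorphisms assembled from $\theta,\theta',\rho,\rho'$ and their inverses, and reorderings $\tau,\gamma$. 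Both sides are then isomorphisms $A^{*,*}(X;V'\oplus\overline{V})\to A^{*,*}(X;V'\oplus E\oplus\widetilde{V}\oplus\widetilde{E})$, and after cancelling the matching suspensions they differ only by a residual block-permutation automorphism of (a trivial stabilization of) the split bundle $V'\oplus E\oplus\widetilde{V}\oplus\widetilde{E}$, built from $\gamma$ and the $\tau$'s together with the identification $E\oplus\overline{E}\cong\triv_X^{2m}\cong E\oplus\widetilde{E}$. The $-\id$ in $\tau$ and the matching sign in $\gamma$ are arranged precisely so that this automorphism has determinant one; hence, after adding a trivial summand, it becomes a product of elementary transvections (a signed block swap $\bigl(\begin{smallmatrix}0&\id\\-\id&0\end{smallmatrix}\bigr)$ factors as three transvections, and any residual lower-triangular discrepancies are trivialized by the $t$-homotopy used in the proof of Lemma~\ref{l:twq}). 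Lemma~\ref{l:elematrix} then shows that it induces the identity on the relevant Thom space, yielding the claimed equality.

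The main obstacle will be exactly this last step: tracking all the reorderings introduced by $\tau$ and by insertion of the $E\oplus\overline{E}\cong\triv_X^{2m}\cong E\oplus\widetilde{E}$ blocks, and confirming that every sign discrepancy cancels so that the residual automorphism genuinely lies in the special linear group and therefore falls under Lemma~\ref{l:elematrix}. Once the orientations are seen to be consistent, the remaining verification is a purely formal diagram chase resting on the coherence properties (1) and (2) of the canonical isomorphisms $\Theta$ recorded in Definition~\ref{def:fdtw}.
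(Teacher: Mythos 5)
Your construction is essentially the paper's own: by naturality of the cup product, your composite $(\id\oplus\rho)^A\circ(-\cup\Sigma_T^{2m}1)$ equals cup-product with the Thom class $\thc(\overline{E},\rho)=\rho^A(\Sigma_T^{2m}1)$, which is precisely how the paper defines $\Xi_{(\overline{V},\theta)}^{(\overline{E},\rho)}$ (the paper merely suppresses your explicit reordering $\gamma$, identifying the two orderings of the summands implicitly). Your verification sketch for functoriality and for the compatibility with the canonical isomorphisms (expansion, cancellation, and reduction to determinant-one block automorphisms handled by Lemma~\ref{l:elematrix} and the $t$-homotopy of Lemma~\ref{l:twq}) is a correct filling-in of what the paper dismisses as ``straightforward,'' so the proposal is correct and follows the same route.
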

\begin{proof}
Denote
\[
\thc(\overline{E},\rho)= \rho^A(\Sigma_T^{2m} 1)\in A^{*,*}(X; E\oplus\overline{E}).
\]
The desired isomorphism is given by the cup-product with $\thc(\overline{E},\rho)$:
\[
A^{*,*}(X;V'\oplus\overline{V}) \xrightarrow{- \cup \thc(\overline{E},\rho)} A^{*,*}(X;V'\oplus\overline{V}\oplus E \oplus\overline{E}).
\]
The functoriality property and consistency with canonical isomorphisms are straightforward.
\end{proof}

\begin{definition}
Let $i\colon Y\to X$ be a closed embedding of smooth varieties with a rank $n$ normal bundle $N_i$ and let $\mathcal{E}=E'\ominus E$ be a formal difference of vector bundles over $X$. For isomorphisms of vector bundles $\theta \colon E\oplus \overline{E}\xrightarrow{\simeq} \triv_X^{2m_1}$, $\tau_X \colon T_X\oplus \overline{T}_X\xrightarrow{\simeq} \triv_X^{2m_2}$ and $\tau_Y \colon T_Y\oplus \overline{T}_Y\xrightarrow{\simeq} \triv_Y^{2m_3}$ define push-forward map
\[
i^{\theta,\tau_X,\tau_Y}_A\colon A^{*,*}_{(i^*\overline{E}\oplus\overline{T}_Y,i^*\theta\oplus\tau_Y)}(Y;i^*\mathcal{E}\ominus T_Y)\to A^{*+2n,*+n}_{(\overline{E}\oplus\overline{T}_X,\theta\oplus\tau_X)}(X;\mathcal{E}\ominus T_X)
\]
as the composition
\begin{multline*}
A^{*,*}_{(i^*\overline{E}\oplus\overline{T}_Y,i^*\theta\oplus\tau_Y)}(Y;i^*\mathcal{E}\ominus T_Y) \xrightarrow{\simeq} \\
\xrightarrow{\simeq} A^{*,*}_{(i^*\overline{E}\oplus i^*\overline{T}_X\oplus\overline{T}_Y,i^*\theta\oplus i^*\tau_X\oplus\tau_Y)}(Y;i^*\mathcal{E} \ominus i^*T_X\oplus i^*T_X \ominus T_Y) \xrightarrow{\simeq} \\
\xrightarrow{\simeq} A^{*,*}_{(i^*\overline{E}\oplus i^*\overline{T}_X\oplus\overline{T}_Y,i^*\theta\oplus i^*\tau_X\oplus\tau_Y)}(Y;i^*\mathcal{E}\ominus i^*T_X \oplus N_i\oplus T_Y \ominus T_Y) \xrightarrow{\simeq} \\
\xrightarrow{\simeq} A^{*,*}_{(i^*\overline{E}\oplus i^*\overline{T}_X,i^*\theta\oplus i^*\tau_X)}(Y;i^*\mathcal{E}\ominus i^*T_X \oplus N_i)  \xrightarrow{i^{(\overline{E}\oplus\overline{T}_X,\theta\oplus \tau_X)}_A} \\
\xrightarrow{i^{(\overline{E}\oplus\overline{T}_X,\theta\oplus \tau_X)}_A}  A^{*+2n,*+n}_{(\overline{E}\oplus\overline{T}_X,\theta\oplus\tau_X)}(X;\mathcal{E}\ominus T_X)
\end{multline*}
Here the first and the third isomorphisms are given by Lemma~\ref{l:tdc}, the second one is given by Lemma~\ref{l:twq} applied to the short exact sequence $T_Y\to i^*T_X \to N_i$ and the last map is the push-forward map introduced in the end of Definition~\ref{def:fdtw}. One can easily check that morphism $i^{\theta,\tau_X,\tau_Y}_A$ agrees with the canonical isomorphisms:
\[
i^{\theta',\tau'_X,\tau'_Y}_A \circ \Theta_{(i^*\overline{E}\oplus \overline{T}_Y,i^*\theta\oplus \tau_Y)}^{(i^*\widetilde{E}\oplus \widetilde{T}_Y,i^*\theta'\oplus \tau'_Y)} = 
\Theta_{(\overline{E}\oplus \overline{T}_X,\theta\oplus \tau_X)}^{(\widetilde{E}\oplus \widetilde{T}_X,\theta'\oplus \tau'_X)} \circ i^{\theta,\tau_X,\tau_Y}_A 
\]
for another choice of complement bundles, which allows us to omit the choices from the notation and denote the push-forward map
\[
i_A\colon A^{*,*}(Y;i^*\mathcal{E}\ominus T_Y)\to A^{*+2n,*+n}(X;\mathcal{E}\ominus T_X).
\] 
\end{definition}

\begin{theorem}
\label{t:twistedpbundle}
All statements of Theorem~\ref{t:twistedpbundlevect} and Corollary~\ref{cor:twistedpbundlevect} hold for $V$ and $E$ being formal differences of vector bundles. In particular, for a formal difference $\mathcal{V}$ of vector bundles over $X$ the push-forward map
\[
A^{*-4n,*-2n}_\eta(X;\mathcal{V}\ominus T_{X})\xrightarrow{i_A} A^{*,*}_\eta(\Proj^{2n}_X;p_1^*\mathcal{V}\ominus T_{\Proj^{2n}_X})
\]
is an isomorphism.
\end{theorem}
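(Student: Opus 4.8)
The plan is to reduce both assertions to the vector-bundle computations of Theorem~\ref{t:twistedpbundlevect} and Corollary~\ref{cor:twistedpbundlevect}. The point is that the formal-difference formalism of Definition~\ref{def:fdtw} manufactures the twisted groups, pullbacks, cup-products and closed-embedding push-forwards by choosing complement bundles and falling back on the honest-bundle constructions; in particular the formal-difference $i_A$ and $q^A$ fit into the same localization long exact sequences as their untwisted counterparts. I would therefore re-run the inductive proof of Theorem~\ref{t:twistedpbundlevect} with honest bundles replaced by formal differences throughout, the invariant governing the four cases now being the degree $\deg\mathcal{E}=\deg\det E'-\deg\det E$ of the fibre twist.

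Concretely, the three inputs of that induction all survive. The localization sequences attached to the linear embeddings $\Proj^{l'}\hookrightarrow\Proj^{l}$ have formal-difference versions, obtained from the honest ones over the affine model of $\Proj^{l}_X$ supplied by Jouanolou's device. The $\widetilde{K}_0$-invariance of Corollary~\ref{c:K0_coh} holds for formal-difference twists directly from the defining formula $A^{*,*}(X;E'\ominus E)=A^{*,*}(X;E'\oplus\overline{E})$. Finally the base case over $\Proj^1$ goes through because $\widetilde{K}_0(\Proj^1)=\Z$, so Lemma~\ref{l:dual_coh} and Corollary~\ref{c:K0_coh} still pin down the twisted groups. (Alternatively one may represent $[\mathcal{E}]$ by a genuine bundle $E_{\mathrm{vb}}$ over $\Proj^k$ and quote Theorem~\ref{t:twistedpbundlevect} verbatim: every class in $\widetilde{K}_0(\Proj^k)$ is a bundle class, since for $m>0$ the nowhere-vanishing section $(x_0^m,\dots,x_k^m)$ exhibits $\struct(-m)$ as a subbundle of $\triv^{k+1}_{\Proj^k}$ with locally free quotient $Q_m$, whence $-[\struct(-m)]=[Q_m]$ and the negative part of any class can be absorbed; as $c_1$ is insensitive to trivial summands one gets $\deg\det E_{\mathrm{vb}}=\deg\mathcal{E}$, so the parity is preserved.)

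For the ``in particular'' clause I would specialize the resulting formal-difference case~IIa). Using $T_{\Proj^{2n}_X}=p_1^*T_X\oplus p_2^*T_{\Proj^{2n}}$ one rewrites
\[
p_1^*\mathcal{V}\ominus T_{\Proj^{2n}_X}=p_1^*(\mathcal{V}\ominus T_X)\oplus p_2^*(0\ominus T_{\Proj^{2n}}).
\]
Since $\deg\det T_{\Proj^{2n}}=2n+1$ is odd and $k=2n$ is even, this is exactly case~IIa) with base twist $\mathcal{V}\ominus T_X$ and fibre twist $0\ominus T_{\Proj^{2n}}$, so the push-forward $i_A$ is an isomorphism. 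To see that its source is $A^{*-4n,*-2n}_\eta(X;\mathcal{V}\ominus T_X)$ note that $p_1 i=\id_X$, that $i^*p_2^*T_{\Proj^{2n}}\cong\triv^{2n}_X$, and that $N_i\cong\triv^{2n}_X$; hence the source twist is $(\mathcal{V}\ominus T_X)\oplus(\triv^{2n}_X\ominus\triv^{2n}_X)$, which Lemma~\ref{l:tdc} identifies canonically with $\mathcal{V}\ominus T_X$. One checks that this identification is precisely the prefix $\Xi,\ \text{(Lemma~\ref{l:twq})},\ \Xi^{-1}$ occurring in the definition of the $\ominus T$-push-forward of the last Definition, so the isomorphism just produced is that very push-forward.

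The main obstacle is the bookkeeping that upgrades these group-level statements to statements about the maps. One must check that the canonical isomorphisms $\Theta$ and $\Xi$ of Definition~\ref{def:fdtw} and Lemma~\ref{l:tdc} commute with the formal-difference $i_A$, $i^A$ and with the boundary maps of the localization sequences, so that the split short exact sequence of case~Ib) and the one-sided isomorphisms of IIa)--IIb) hold on the nose rather than merely up to an unspecified isomorphism of groups; and, for the second part, that the source-identification above really does coincide with the built-in prefix of the $\ominus T$-push-forward. All of this follows from the functoriality of $\Theta$ and $\Xi$ already recorded, but it is where the care resides.
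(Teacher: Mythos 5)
Your proposal is correct and takes essentially the same route as the paper: the paper's entire proof of Theorem~\ref{t:twistedpbundle} is the single line ``Follows from Theorem~\ref{t:twistedpbundlevect}'', and your reduction---unwinding Definition~\ref{def:fdtw} so that formal-difference twists become honest-bundle twists over Jouanolou models, matching parities via $\deg\det$, and specializing case IIa) through the splitting $T_{\Proj^{2n}_X}=p_1^*T_X\oplus p_2^*T_{\Proj^{2n}}$ for the ``in particular'' clause---is precisely the bookkeeping that one-line proof leaves implicit. If anything, your write-up (the $\widetilde{K}_0(\Proj^k)$ representability argument via the bundles $Q_m$, and the $\Theta$/$\Xi$ compatibility checks) supplies more detail than the paper itself.
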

\begin{proof}
Follows from Theorem~\ref{t:twistedpbundlevect}.
\end{proof}

\section{Push-forwards along projective morphisms}

In this section we define push-forward maps along projective morphisms following the strategy realized in \cite{Ne09a} for derived Witt groups. We formulate some useful properties of these maps. The proofs are quite similar to the ones given in loc. cit. and we leave it to the reader to modify the reasoning to fit in the considered setting. Moreover, the reader should keep in mind that in order to define push-forward maps one needs to make some explicit choices of complement vector bundles as described in the previous section, although, up to canonical isomorphisms from Definition~\ref{def:fdtw}, the result does not depend on the choices made.

\begin{lemma}[cf. {\cite[Lemma 3.1 (ii)]{Ne09a}}]
\label{l:pushpoint}
Let $\mathcal{V}$ be a formal difference of vector bundles over a smooth variety $X$. Consider two linear embeddings $j_1,j_2\colon\mathbb{P}^m\to \mathbb{P}^{k}$ and denote by $p_k\colon \mathbb{P}^k_X\to X$ and $p_m\colon \mathbb{P}^m_X\to X$ the canonical projections. Then 
\[
(j_1\times \id_X)_A=(j_2\times \id_X)_A\colon A^{*-2k,*-k}(\mathbb{P}^m_X;p_m^*\mathcal{V}\ominus T_{\mathbb{P}^{m}_X}) \to A^{*-2m,*-m}(\Proj^{k}_X;p_k^*\mathcal{V}\ominus  T_{\mathbb{P}^{k}_X}).
\]
\end{lemma}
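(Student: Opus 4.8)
The plan is to reduce the comparison of the two embeddings to the triviality of a single ambient automorphism, and then to dispose of that automorphism using the $\A^1$-homotopies of Lemma~\ref{l:elematrix} together with the square-class computation of Lemma~\ref{lemma_square_symbol}.

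First I would observe that, since $j_1,j_2\colon\Proj^m\to\Proj^k$ are linear embeddings with the same source, the two injective linear maps $k^{m+1}\to k^{k+1}$ representing them extend to a single invertible matrix, so there is $g\in\operatorname{GL}_{k+1}(k)$ whose induced automorphism $\overline{g}\in\operatorname{PGL}_{k+1}(k)$ of $\Proj^k$ satisfies $j_2=\overline{g}\circ j_1$. Writing $\overline{g}_X=\overline{g}\times\id_X$, we get $j_2\times\id_X=\overline{g}_X\circ(j_1\times\id_X)$. Directly from the definition $i_A=z^A\circ d^A$ in Definition~\ref{d:coht}, precomposing the ambient variety with an isomorphism only relabels the deformation-to-normal-bundle data, which yields $(j_2\times\id_X)_A=(\overline{g}_X)_A\circ(j_1\times\id_X)_A$. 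As $\overline{g}_X$ is an isomorphism, its push-forward is, after the canonical twist identifications of Definition~\ref{def:fdtw}, inverse to its pull-back; hence it suffices to show that $\overline{g}_X$ induces the identity on $A^{*,*}(\Proj^k_X;p_k^*\mathcal{V}\ominus T_{\Proj^k_X})$. One must keep in mind that $\overline{g}_X$ acts both on $\Proj^k_X$ and, through its derivative, on the twisting bundle $T_{\Proj^k_X}$.

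Next I would factor $g=e\cdot c$, where $e\in\operatorname{E}_{k+1}(k)=\SL_{k+1}(k)$ is a product of transvections and $c=\operatorname{diag}(d,1,\ldots,1)$ with $d=\det g$. For the elementary factor, inserting a parameter $t$ into each transvection exactly as in the proof of Lemma~\ref{l:elematrix} produces a family $e(t)\in\SL_{k+1}(k[t])$ joining the identity to $e$. This is a genuine $\A^1$-family of automorphisms of $\Proj^k$, so it acts compatibly on the tangent bundle and on its complements, and homotopy invariance shows that $\overline{e}_X$ induces the identity on the twisted groups.

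The main obstacle is the scaling factor $\overline{c}$, for which no $\A^1$-path to the identity exists over a general field when $d$ is not a square. Here I would pass to the explicit model of $\ominus T_{\Proj^k}$ furnished by Definition~\ref{def:fdtw}: choosing a complement $\overline{T}$ with $T_{\Proj^k}\oplus\overline{T}\cong\triv^{2N}$ of \emph{even} rank $2N$, I would compute the automorphism of $\Th(\triv^{2N})$ induced by $\overline{c}$. The crucial point is that, owing to the even rank forced by Definition~\ref{def:fdtw} and to $\det T_{\Proj^k}=\struct(k+1)$, this induced automorphism of the trivialized bundle is, up to an elementary matrix, diagonal with scaling entries that pair into blocks $\operatorname{diag}(d,d^{-1})$, equivalently of total determinant a perfect square. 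Such automorphisms act as the identity on the Thom space by Lemma~\ref{l:elematrix} (for the elementary and $\operatorname{diag}(s^{-1},s)$ parts) and by Lemma~\ref{lemma_square_symbol} (for the residual scaling by a square). Verifying this pairing — tracking precisely how $\overline{c}$ acts on the chosen trivialization of $T_{\Proj^k}\oplus\overline{T}$ and exhibiting the square class — is the single genuinely computational step; once it is carried out, $\overline{c}_X$ acts trivially, and combining this with the elementary factor gives $(\overline{g}_X)_A=\id$, whence the claim.
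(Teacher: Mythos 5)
Your overall strategy --- reduce the two embeddings to an ambient linear automorphism and kill that automorphism by the $\A^1$-homotopies of Lemma~\ref{l:elematrix} --- is indeed the strategy of \cite[Lemma~3.1]{Ne09a}, to which the paper defers. But your treatment of the scaling factor is not just incomplete: the statement you are trying to verify there is false in general. The claim that $\overline{c}_X$, with $c=\operatorname{diag}(d,1,\ldots,1)$, acts as the identity on $A^{*,*}_\eta(\Proj^{k}_X;p_k^*\mathcal{V}\ominus T_{\Proj^{k}_X})$ fails whenever $k$ is odd and $\langle d\rangle=1+\eta[d]\neq 1$. To see this, take $X=\pt$, $\mathcal{V}=0$, $k=1$ and evaluate at the fixed point $a=[0:1]$ of $\overline{c}$: since $\overline{c}\circ i_a=i_a$, the composite of $\overline{c}$-pullback with the twist identification induced by the derivative $d\overline{c}$ satisfies $i_a^A\circ(\overline{c}\text{-action})=\langle d\rangle\cdot i_a^A$, because $d\overline{c}_a$ is multiplication by $d$ on $T_a\Proj^1$ and an automorphism of determinant $d$ acts on the Thom model of the (formally negated) fibre by $\langle d\rangle^{\pm 1}=\langle d\rangle$. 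Since $i_a^A$ is a split surjection (case Ib of Theorem~\ref{t:twistedpbundle}), the action differs from the identity, e.g.\ for Witt groups over $\mathbb{Q}$ with $d=2$. This also shows that no bookkeeping with complements can produce your ``$\operatorname{diag}(d,d^{-1})$ blocks'': the complement bundle carries no action of $\overline{c}$ to cancel against, the canonical isomorphisms of Definition~\ref{def:fdtw} are neutral, and evaluation at a fixed point detects the class $\langle d\rangle$ independently of all choices. (For $k$ even the action does happen to be trivial, but your argument never uses the parity of $k$, which is itself a warning sign.)

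The repair is a different, and easier, choice at the very first step, and it is exactly what Nenashev does: because $j_1$ is a linear embedding with $m<k$, its image spans a \emph{proper} linear subspace of $k^{k+1}$, so after picking any $g_0\in\operatorname{GL}_{k+1}(k)$ with $\overline{g}_0\circ j_1=j_2$ you may compose it with an automorphism that is the identity on the span of $j_1$ and has determinant $(\det g_0)^{-1}$ on a complementary subspace; the resulting $g$ still carries $j_1$ to $j_2$ and lies in $\SL_{k+1}(k)=\operatorname{E}_{k+1}(k)$. Then the scaling factor never appears: $g$ is a product of transvections, the parameter trick of Lemma~\ref{l:elematrix} gives an $\A^1$-family of automorphisms of $\Proj^k_X$ joining $\id$ to $\overline{g}_X$, acting compatibly on $T_{\Proj^k_X}$, on the twists and on the deformation-to-the-normal-bundle data, and homotopy invariance yields $(j_2\times\id_X)_A=(\overline{g}_X\circ(j_1\times\id_X))_A=(j_1\times\id_X)_A$. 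In short, the determinant must be disposed of by exploiting the codimension $k-m\geq 1$ to choose $g$ inside $\SL_{k+1}(k)$, not by an appeal to Lemma~\ref{lemma_square_symbol}, which cannot apply here.
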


\begin{definition} \label{d:prpush}
Let $\mathcal{V}$ be a formal difference of vector bundles over a smooth variety $X$ and let $p\colon \mathbb{P}^{k}_X\to X$ be the canonical projection.

\noindent $\mathbf{k=2n}$: choose a rational point $a\in \mathbb{P}^{k}$ and denote $i\colon X\to \mathbb{P}^{k}_X$ the embedding $i(x)=(x,a)$. Theorem~\ref{t:twistedpbundle} allows us to define
\[
p_A=i_A^{-1}\colon A^{*,*}_\eta(\Proj^{k}_X;p^*\mathcal{V}\ominus T_{\mathbb{P}^{k}_X})\xrightarrow{\simeq} A^{*-2k,*-k}_\eta(X;\mathcal{V}\ominus T_X).
\] 

\noindent $\mathbf{k=2n-1}$:
consider a linear embedding $j\colon \mathbb{P}^{k}\to \mathbb{P}^{k+1}$ and denote $\tilde{p}\colon \mathbb{P}_X^{k+1}\to X$ the canonical projection. Define
\[
p_A=\tilde{p}_A\circ j_A\colon A^{*,*}_\eta(\Proj^{k}_X;p^*\mathcal{V}\ominus T_{\mathbb{P}^{k}_X})\xrightarrow{} A^{*-2k,*-k}_\eta(X;\mathcal{V}\ominus T_X).
\] 

\noindent We refer to $p_A$ in both cases as \textit{push-forward map along the projection $p$}. By Lemma~\ref{l:pushpoint} the defined push-forwards $p_A$ do not depend on the choice of the rational point $a$ and the linear embedding $j$.
\end{definition}

\begin{definition} \label{def:projpush}
Let $\mathcal{V}$ be a formal difference of vector bundles over a smooth variety $X$ and let $f\colon Y\to X$ be a codimension $m$ projective morphism of smooth varieties. Represent $f$ as a composition
\[
f=p\circ i\colon Y\xrightarrow{i} \mathbb{P}^n_X \xrightarrow{p} X,
\]
where $i$ is a codimension $n+m$ closed embedding and $p$ is the canonical projection. We define \textit{push-forward along $f$} as the composition 
\[
f_A=p_A\circ i_A\colon A_\eta^{*,*}(Y;f^*\mathcal{V}\ominus T_Y) \to A_\eta^{*+2m,*+m}(X;\mathcal{V}\ominus T_X).
\]
Following {\cite[Proposition 4.2]{Ne09a}} one can show that $f_A$ does not depend on the chosen factorization $f=p\circ i$.
\end{definition}

\begin{lemma}[cf. {\cite[4.5]{Ne09a}}]
Let $\mathcal{V}$ be a formal difference of vector bundles over a smooth variety $X$ and let $Z\xrightarrow{g}Y\xrightarrow{f} X$ be projective morphisms of smooth varieties of codimensions $m$ and $n$. Then
\[
(fg)_A=f_A\circ g_A\colon A^{*,*}_\eta (Z;(gf)^*\mathcal{V}\ominus T_Z)\to A^{*+2(m+n),*+(m+n)}_\eta (X;\mathcal{V}\ominus T_X).
\]
\end{lemma}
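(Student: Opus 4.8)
The plan is to follow the classical Panin--Smirnov/Nenashev scheme for functoriality of push-forwards, reducing the identity to three elementary compatibilities and then composing them formally. First I would fix factorizations $f=p_f\circ i_f$ and $g=p_g\circ i_g$ as in Definition~\ref{def:projpush}, where $i_f\colon Y\to\Proj^b_X$ and $i_g\colon Z\to\Proj^a_Y$ are closed embeddings and $p_f\colon\Proj^b_X\to X$, $p_g\colon\Proj^a_Y\to Y$ are canonical projections, so that by definition $f_A=(p_f)_A\circ(i_f)_A$ and $g_A=(p_g)_A\circ(i_g)_A$. The goal is to assemble from these a single admissible factorization of $fg$ (a closed embedding followed by a projection onto $X$) and to match it against $f_A\circ g_A$, invoking the independence-of-factorization property recorded in Definition~\ref{def:projpush}.

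The geometric input is the base change of $i_f$ along the smooth projection $p_g$. Setting $I=\id_{\Proj^a}\times i_f\colon\Proj^a_Y\to\Proj^a_{\Proj^b_X}$, which is a closed embedding since $i_f$ is, and letting $Q\colon\Proj^a_{\Proj^b_X}\to\Proj^b_X$ be the projection, one checks directly that $i_f\circ p_g=Q\circ I$ and that the resulting square is transversal; indeed it is Cartesian, $\Proj^a_Y=\Proj^a_{\Proj^b_X}\times_{\Proj^b_X}Y$, with $N_I\cong p_g^*N_{i_f}$. Composing a Segre embedding $\Proj^a\times\Proj^b\hookrightarrow\Proj^N$ with the identity on $X$ turns the iterated projection $p_f\circ Q$ into $P\circ\Sigma$ with $\Sigma$ a closed embedding and $P\colon\Proj^N_X\to X$ the canonical projection. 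Thus
\[
fg=p_f\circ i_f\circ p_g\circ i_g=(p_f\circ Q)\circ(I\circ i_g)=P\circ\bigl(\Sigma\circ I\circ i_g\bigr),
\]
which is an admissible factorization of $fg$.

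With this in place the statement reduces to three compatibilities, each obtained by transcribing Nenashev's arguments into the twisted setting: (i) functoriality for a composite of closed embeddings, proved by double deformation to the normal bundle together with Lemma~\ref{l:twq} applied to the conormal exact sequences, which lets one split $(\Sigma\circ I\circ i_g)_A=\Sigma_A\circ I_A\circ(i_g)_A$; (ii) functoriality for the iterated projection, $P_A\circ\Sigma_A=(p_f)_A\circ Q_A$, following Definition~\ref{d:prpush}, Theorem~\ref{t:twistedpbundle}, Lemma~\ref{l:pushpoint} and independence of factorization; and (iii) the transversal base change identity $Q_A\circ I_A=(i_f)_A\circ(p_g)_A$ for the Cartesian square above. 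Granting these, the computation is purely formal:
\[
(fg)_A=P_A\circ\Sigma_A\circ I_A\circ(i_g)_A=(p_f)_A\circ Q_A\circ I_A\circ(i_g)_A=(p_f)_A\circ(i_f)_A\circ(p_g)_A\circ(i_g)_A=f_A\circ g_A.
\]

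The main obstacle is not this diagram chase but the bookkeeping of the twists by the tangent bundles and the canonical identifications $\Theta$ and $\Xi$ from Definition~\ref{def:fdtw} and Lemma~\ref{l:tdc}. Every arrow above acts on groups twisted by formal differences of the shape $\mathcal{W}\ominus T_{(-)}$, and at each stage one must reconcile the splittings of the tangent short exact sequences $T_Z\to g^*T_Y\to N_{i_g}$ and $T_Y\to f^*T_X\to N_{i_f}$, and their analogues for $I$, $Q$ and $\Sigma$, with the identifications built into the individual push-forwards. The real content is to verify that the relevant squares of canonical isomorphisms commute, so that the formal composition above is legitimate; this is exactly the part that Nenashev carries out in the oriented case and that transfers once the twists are inserted, which is why it suffices to exhibit the reduction and indicate the tools.
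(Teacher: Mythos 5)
Your proposal is correct and takes essentially the same route as the paper: for this lemma the paper offers no independent argument, deferring entirely to Nenashev's proof of \cite[4.5]{Ne09a}, and your reduction --- factor $f$ and $g$, commute $i_f\circ p_g$ through the transversal Cartesian square, compress the iterated projection via the Segre embedding, and then combine functoriality for closed embeddings, the Segre compatibility for projections, and independence of factorization --- is exactly that argument transcribed into the twisted setting. The bookkeeping of the twists $\mathcal{W}\ominus T_{(-)}$ via the canonical isomorphisms of Definition~\ref{def:fdtw}, Lemma~\ref{l:tdc} and Lemma~\ref{l:twq}, which you correctly flag as the residual content, is precisely what the paper leaves to the reader.
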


\begin{lemma}[cf. {\cite[4.7]{Ne09a}}]
Consider the following Cartesian square consisting of smooth varieties with all the morphisms being smooth.
\[
\xymatrix{
Y' \ar[d]_{g'} \ar[r]^{f'} & X' \ar[d]^{g} \\
Y \ar[r]^{f} & X
}
\]
Suppose that the square is transversal, $f$ is projective and put $\codim f=\codim f'=n$. Let $\mathcal{V}$ be a formal difference of vector bundles over $X$. Then 
\[
g^A\circ f_A= f'_A\circ \phi \circ (g')^A\colon A_\eta^{*,*}(Y;f^*\mathcal{V}\ominus T_{Y})\to A_\eta^{*+2m,*+m}(X';g^*\mathcal{V}\ominus g^*T_X).
\]
Here 
\[
\phi\colon A_\eta^{*,*}(Y';(fg')^*\mathcal{V}\ominus (g')^*T_{Y})\xrightarrow{\simeq} A_\eta^{*,*}(Y';(fg')^*\mathcal{V}\oplus (g')^*T_{X'}\ominus ((fg')^*T_X\oplus T_{Y'}))
\]
is given by Lemma~\ref{l:twq} applied to the exact sequence
\[
0\to T_{Y'}\xrightarrow{(dg',-df')} (g')^*T_Y\oplus (f')^*T_{X'}\xrightarrow{(df,dg)} (fg')^*T_X\to 0
\]
and cancellation isomorphisms from Lemma~\ref{l:tdc}.
\end{lemma}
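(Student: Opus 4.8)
The plan is to reduce the identity to its two elementary cases---closed embeddings and projections from projective space---prove each by naturality, and then reassemble by pasting. Since $f$ is projective, fix a factorization $f=p\circ i$ with $i\colon Y\to\Proj^N_X$ a closed embedding and $p\colon\Proj^N_X\to X$ the canonical projection, as in Definition~\ref{def:projpush}. Because the outer square is Cartesian and $g$ is smooth, pulling this factorization back along $g$ produces a factorization $f'=p'\circ i'$ through $\Proj^N_{X'}=\Proj^N_X\times_X X'$, together with the base-change map $\tilde g\colon\Proj^N_{X'}\to\Proj^N_X$, and the two resulting squares (one for $i$, one for $p$) are again Cartesian. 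Transversality is inherited: the square for $p$ is transversal since $g$ is smooth, and the square for $i$ is the base change of $i$ along the smooth $\tilde g$. It thus suffices to prove base change for a closed embedding and for a projection as standalone statements, and to check that pasting the two identities reproduces $\phi$ and the stated twists; the composition law $(fg)_A=f_A\circ g_A$ from the previous lemma governs the reassembly at the level of maps.

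\textbf{Closed embeddings.} In the case $f=i$ a closed embedding, $i_A$ is built from the homotopy purity isomorphism via deformation to the normal bundle (Definition~\ref{d:coht}): from the quotient map $z^A$ and the purity identification $d^A$ of $E/(E-Y)$ with a Thom space. Transversality gives $N_{i'}\cong(g')^*N_i$, and both $z^A$ and $d^A$ are natural for the smooth pullback, while the support localization sequences for the zero sections are compatible with flat pullback. Hence $g^A\circ i_A$ and $i'_A\circ(\cdot)\circ(g')^A$ agree already at the level of the relevant Thom spaces, before passing to cohomology. The only genuine content is the twist: the discrepancy between $(g')^*T_Y$ and $T_{Y'}$ is exactly measured by the transversality sequence $0\to T_{Y'}\to(g')^*T_Y\oplus(f')^*T_{X'}\to(fg')^*T_X\to 0$, and applying Lemma~\ref{l:twq} to it, followed by the cancellation of Lemma~\ref{l:tdc}, produces precisely $\phi$ (the residual $T_{X'}\ominus T_{X'}$ on the target canceling to leave the twist $\ominus g^*T_X$). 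So the closed-embedding case is the assertion $g^A\circ i_A=i'_A\circ\phi\circ(g')^A$.

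\textbf{Projections.} For $N=2n$ the push-forward $p_A$ is \emph{defined} as $i_A^{-1}$ for a section $i\colon X\to\Proj^{2n}_X$, $i(x)=(x,a)$ (Definition~\ref{d:prpush}). Choosing the same rational point $a$ on both sides gives sections $i,i'$ forming a transversal Cartesian square over $g$, so the closed-embedding case already proved yields $\tilde g^A\circ i_A=i'_A\circ\phi_i\circ g^A$. Inverting, this is equivalent to $g^A\circ p_A=\phi_i^{-1}\circ p'_A\circ\tilde g^A$, i.e.\ to base change for the projection, the required twist being determined by $\phi_i$ (up to conjugation by $p'_A=(i'_A)^{-1}$); here I use that $\phi_i$ is an isomorphism, being assembled from the isomorphisms of Lemmas~\ref{l:twq} and~\ref{l:tdc}. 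For $N=2n-1$ one has $p_A=\tilde p_A\circ j_A$ with $j$ a linear embedding, so base change for $p_A$ follows by pasting the closed-embedding case for $j$ with the even-projection case for $\tilde p$, via the composition law. Lemma~\ref{l:pushpoint} guarantees that the choices (rational point, linear embedding) can be taken compatibly on the two sides.

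The main obstacle is not the geometric naturality---purity, excision and localization are manifestly functorial for smooth pullback---but the bookkeeping required to show that the cancellation isomorphism $\phi$ assembled for the whole morphism $f$ coincides with the composite of the cancellations produced for $i$ and for $p$, and that all of these are compatible with the canonical identifications $\Theta$ and $\Xi$ (Definition~\ref{def:fdtw} and Lemma~\ref{l:tdc}) used to suppress the choices of complement bundles. This is a diagram chase of the same nature as, and no harder than, the well-definedness of $f_A$ (Definition~\ref{def:projpush}) and the composition law; following the corresponding argument in \cite[4.7]{Ne09a} and inserting the twist-tracking of Lemmas~\ref{l:twq} and~\ref{l:tdc} at each stage completes the proof.
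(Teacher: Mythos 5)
Your proposal is correct and takes essentially the same route the paper intends: the paper gives no independent proof of this lemma, deferring to the argument of \cite[4.7]{Ne09a}, and your reduction---factor $f=p\circ i$, prove the closed-embedding case by naturality of homotopy purity and deformation to the normal bundle under transversal base change, handle the projection by inverting the section push-forward through which $p_A$ is defined (Definition~\ref{d:prpush}), and reassemble while tracking the twists via Lemmas~\ref{l:twq} and~\ref{l:tdc}---is exactly that adaptation. The remaining bookkeeping you flag (matching the composite cancellation isomorphisms with $\phi$ and the canonical identifications of Definition~\ref{def:fdtw}) is precisely what the paper leaves to the reader.
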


\begin{lemma}[cf. {\cite[4.11]{Ne09a}}]
Let $\mathcal{V}_1$ and $\mathcal{V}_2$ be formal differences of vector bundles over a smooth variety $X$ and let $f\colon Y\to X$ be a projective morphism of smooth varieties. Then
\[
f_A(f^A(\alpha)\cup \beta)=\alpha \cup f_A(\beta)
\]
for $\alpha\in A^{*,*}(X;\mathcal{V}_1),\,\beta\in A^{*,*}(Y;f^*\mathcal{V}_2\ominus T_{Y})$.
\end{lemma}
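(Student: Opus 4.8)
The plan is to reduce the projection formula to the single case of a closed embedding, after which everything is formal. Recall from Definition~\ref{def:projpush} that $f_A$ is built by factoring $f=p\circ i$ into a closed embedding $i\colon Y\to\Proj^n_X$ and the canonical projection $p\colon\Proj^n_X\to X$, so that $f_A=p_A\circ i_A$, while contravariance of pull-back gives $f^A=i^A\circ p^A$; the cup-product is functorial under pull-back. Granting the projection formula for $i$ and for $p$ separately, a direct substitution yields it for $f$. Writing $\gamma=p^A(\alpha)$ and using $i^A p^A=f^A$, the formula for $i$ gives $i_A(i^A(p^A\alpha)\cup\beta)=p^A(\alpha)\cup i_A(\beta)$, and then the formula for $p$ gives $p_A(p^A(\alpha)\cup i_A(\beta))=\alpha\cup p_A(i_A(\beta))=\alpha\cup f_A(\beta)$. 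The twists match throughout, since $f^*(\mathcal{V}_1\oplus\mathcal{V}_2)\ominus T_Y$ is the common twist of the intermediate classes and the degrees $2m,m$ are additive along the factorization.

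Next I would dispose of the projections. For even $k=2n$ one has $p_A=i_A^{-1}$ for the fibre inclusion $i\colon X\to\Proj^{2n}_X$, $i(x)=(x,a)$, and $p\circ i=\id_X$, hence $i^A\circ p^A=\id$. Applying the (still to be proved) projection formula for the closed embedding $i$ to $\gamma=p^A(\alpha)$ and $\mu=i_A^{-1}(\delta)$ gives $i_A(\alpha\cup i_A^{-1}(\delta))=p^A(\alpha)\cup\delta$, and applying $i_A^{-1}=p_A$ produces $p_A(p^A(\alpha)\cup\delta)=\alpha\cup p_A(\delta)$. The odd case $k=2n-1$ is $p=\tilde{p}\circ(j\times\id_X)$ with $j\times\id_X$ a closed embedding and $\tilde{p}$ an even projection, so it is already covered by the composition argument of the first paragraph. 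Thus the entire statement is reduced to the projection formula for closed embeddings.

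The heart of the matter is therefore the projection formula for a closed embedding $i\colon Y\to X$. Unwinding the definitions of Section~5, $i_A$ is built from the basic push-forward $i_A=z^A\circ d^A_{i^E}$ of Definition~\ref{d:coht}, conjugated by the canonical isomorphisms $\Theta$, $\Xi$ and the cancellation isomorphism of Lemma~\ref{l:twq}. Since all of these are induced by maps of Thom spaces and by the external product of Definition~\ref{d:cup}, they intertwine cup-products, so it suffices to establish the formula at the level of the support cohomology $A^{*,*}(X/(X-Y))$. There it splits into two compatibilities. First, the quotient map $z^A\colon A^{*,*}(X/(X-Y))\to A^{*,*}(X)$ is a homomorphism of $A^{*,*}(X)$-modules, as asserted in Definition~\ref{d:coht}(4), so $z^A(\alpha\cup w)=\alpha\cup z^A(w)$. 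Second, the purity isomorphism $d_{i^E}$ arising from homotopy purity and deformation to the normal cone is multiplicative with respect to the cup-product with supports of Definition~\ref{d:coht}(4): the deformation space carries a product structure with supports restricting correctly at both ends, which forces $d^A(i^A(\alpha)\cup\beta)=\alpha\cup d^A(\beta)$ after the relevant Thom-space identifications. Combining the two gives the closed-embedding projection formula.

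I expect this last multiplicativity of the purity isomorphism, together with the bookkeeping needed to confirm that every twisted cup-product lands in the intended group $A^{*,*}(X;\mathcal{V}_1\oplus\mathcal{V}_2\ominus T_X)$ under the identifications of Lemma~\ref{l:twq} and Definition~\ref{def:fdtw}, to be the only genuine obstacle; the reductions in the first two paragraphs are purely formal. This is precisely the point at which the argument parallels \cite[4.11]{Ne09a} and the earlier treatments in \cite{PS09,S07a}, and I would import their verification of the deformation-to-normal-cone compatibility essentially verbatim, leaving the twisted bookkeeping as a routine (if lengthy) diagram check.
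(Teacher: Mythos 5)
Your proposal is correct and matches the paper's intended argument: the paper gives no written proof here, stating instead that the proofs follow Nenashev's \cite[4.11]{Ne09a} adapted to the twisted setting, and your reduction --- factor $f=p\circ i$, get the projection case formally from $p_A=i_A^{-1}$ (resp.\ from composition in the odd case), and reduce the closed-embedding case to the $A^{*,*}(X)$-module structure of the localization sequence plus multiplicativity of the deformation-to-the-normal-bundle isomorphism --- is exactly that strategy. Your identification of the purity-isomorphism multiplicativity as the one non-formal ingredient, to be imported from \cite{Ne09a,PS09,S07a}, is also where the paper places the burden.
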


\textbf{Acknowledgements.}
The author wishes to thank I.~Panin for numerous conversations on the subject and the anonymous referee whose valuable comments hopefully made the paper much more readable. The research is supported by Russian Science Foundation grant 14-11-00456.

\end{document}